\RequirePackage{silence}
\documentclass{siamart220329}

\usepackage{amssymb}
\usepackage{mathtools}
\usepackage{booktabs}
\usepackage{enumitem}
\usepackage{flafter}
\usepackage{algpseudocode}

\makeatletter
\AtBeginDocument{%
  \@ifundefined{refstepcounter@optarg}{}{%
    \def\refstepcounter@optarg[#1]#2{%
      \cref@old@refstepcounter{#2}%
      \cref@constructprefix{#2}{\cref@result}%
      \@ifundefined{cref@#1@alias}%
        {\def\@tempa{#1}}%
        {\def\@tempa{\csname cref@#1@alias\endcsname}}%
      \protected@edef\cref@currentlabel{%
        [\@tempa][\arabic{#2}][\cref@result]%
        \csname p@#2\endcsname\csname the#2\endcsname}}%
  }%
}
\makeatother

\newsiamthm{question}{Question}
\newsiamremark{remark}{Remark}

\newcommand{\R}{\mathbb{R}}
\newcommand{\N}{\mathbb{N}}
\newcommand{\ip}[2]{\langle #1,#2\rangle}
\newcommand{\norm}[1]{\lVert #1\rVert}
\newcommand{\Pt}{\widetilde\Pi}
\newcommand{\PiG}{\Pi^{\mathrm G}}
\newcommand{\half}{\tfrac12}

\DeclareMathOperator{\diag}{diag}
\DeclareMathOperator{\spec}{\rho}

\DeclareSymbolFont{matrixsf}{OT1}{cmss}{m}{n}
\SetSymbolFont{matrixsf}{bold}{OT1}{cmss}{bx}{n}
\DeclareMathSymbol{\matA}{\mathalpha}{matrixsf}{`A}
\DeclareMathSymbol{\matB}{\mathalpha}{matrixsf}{`B}
\DeclareMathSymbol{\matE}{\mathalpha}{matrixsf}{`E}
\DeclareMathSymbol{\matI}{\mathalpha}{matrixsf}{`I}
\DeclareMathSymbol{\matK}{\mathalpha}{matrixsf}{`K}
\DeclareMathSymbol{\matP}{\mathalpha}{matrixsf}{`P}
\DeclareMathSymbol{\matR}{\mathalpha}{matrixsf}{`R}
\begingroup
\lccode`~=`A \lowercase{\endgroup\def~{\matA}}
\mathcode`A="8000
\begingroup
\lccode`~=`B \lowercase{\endgroup\def~{\matB}}
\mathcode`B="8000
\begingroup
\lccode`~=`E \lowercase{\endgroup\def~{\matE}}
\mathcode`E="8000
\begingroup
\lccode`~=`I \lowercase{\endgroup\def~{\matI}}
\mathcode`I="8000
\begingroup
\lccode`~=`K \lowercase{\endgroup\def~{\matK}}
\mathcode`K="8000
\begingroup
\lccode`~=`P \lowercase{\endgroup\def~{\matP}}
\mathcode`P="8000
\begingroup
\lccode`~=`R \lowercase{\endgroup\def~{\matR}}
\mathcode`R="8000

\title{The symmetric V-cycle can diverge under the multigrid axioms
for cell-centred discretisations\thanks{Preprint, \today.}}
\author{Ming Hei Wong\thanks{%
Department of Mathematics, University of Tennessee, Knoxville, TN 37996,
USA (\email{mwong4@vols.utk.edu}).}}
\headers{The symmetric V-cycle can diverge under cell-centred axioms}{M.~H.~Wong}

\begin{document}
\maketitle
\begin{abstract}
The axiomatic convergence theory for multigrid methods applied to cell-centred
finite-difference and finite-volume discretisations rests on two hypotheses: an
\emph{imbalanced Galerkin condition} $(G3)$, which states that
$R_{\ell-1}A_\ell P_{\ell-1}=2A_{\ell-1}$ with $R_{\ell-1}=\half
P_{\ell-1}^{T}$, and a \emph{weak approximation property} $(A2)_\alpha$ of
Bramble type.  Under these hypotheses, together with Richardson smoothing, the
symmetric W-cycle and the variable V-cycle are known to be uniformly
convergent, while the uniform convergence of the standard symmetric V-cycle
has remained open.  We answer the question in the negative, in a strong sense,
by two constructions.  First, for every smoothing count $m\in\N$ we exhibit
hierarchies of every depth satisfying $(G3)$, Richardson admissibility with
$C_R=1$, and $(A2)_\alpha$ for every $\alpha\in(0,1]$ with the sharp
level-independent constant $C_{A2}^{2}=4m$, whose symmetric $V(m,m)$-cycle
error operator has spectral radius $\theta_m(1+2\theta_m)>1$ already on
three levels, where $\theta_m=(1-\frac1{4m})^{2m}$, and spectral radius
growing geometrically with the depth; the family shows that any
smoothing-count threshold $m_0$ that could restore uniform V-cycle
convergence must grow at least quadratically in $C_{A2}$.  Second, and in
contrast with the synthetic algebraic construction, we prove that the same
failure occurs in a completely standard discretisation: the cell-centred finite-volume hierarchy for a
one-dimensional diffusion equation with a mesh-aligned coefficient jump
$1\,{:}\,\kappa$ and harmonic (Samarskii) interface averaging satisfies
$(G3)$ exactly and $(A2)_{1/2}$ with a level-independent constant
$C_{A2}=O(\kappa)$, yet for every $\kappa\ge3$ its symmetric $V(1,1)$-cycle
with \emph{any} admissible Richardson parameter---including the optimal
one---diverges geometrically in the number of levels.  In both constructions,
the W-cycle remains uniformly contractive, so the hypotheses separate the two
cycles.  All claims are verified numerically, to machine precision where
exact, by independent assembly of the cycle operators.
\end{abstract}

\begin{keywords}
multigrid, V-cycle, W-cycle, cell-centred finite volumes, Galerkin
condition, weak approximation property, counterexample
\end{keywords}

\begin{MSCcodes}
65N55, 65F10, 65N08
\end{MSCcodes}

\section{Introduction}\label{sec:intro}

Let $A_\ell\in\R^{n_\ell\times n_\ell}$, $\ell=0,1,\dots,L$, be a hierarchy of
symmetric positive definite matrices connected by full-rank prolongations
$P_{\ell-1}\in\R^{n_\ell\times n_{\ell-1}}$ and restrictions
$R_{\ell-1}\in\R^{n_{\ell-1}\times n_\ell}$, and let each level carry a
convergent smoother.  The classical multigrid cycles---V, W, and their
relatives---are obtained from one another by changing only the number $p$ of
recursive coarse-level calls: $p=1$ gives the V-cycle, $p=2$ the W-cycle
\cite{hackbusch,tos}.  For concreteness, Algorithm~\ref{alg:cycle} records
the symmetric cycle in the axiomatic setting: the data are the hierarchy
$(A_\ell,P_\ell,R_\ell)$ together with Richardson parameters $\Lambda_\ell$
(Definition~\ref{def:richardson} below), and the linear system $A_Lu=g_L$ is
solved by iterating $u\leftarrow\mathrm{MG}_p(L,g_L,u)$; the V-cycle and
W-cycle differ in a single line---one recursive coarse call against
two---and everything in this paper turns on that difference.  A central goal
of multigrid theory since the 1980s has
been to identify checkable hypotheses on $(A_\ell,P_\ell,R_\ell)$ and the
smoother under which these cycles contract uniformly in the number of
levels.

\begin{algorithm}[t]
\caption{Axiomatic symmetric cycle,
$u\leftarrow\mathrm{MG}_{p}(\ell,g_\ell,u)$: $p=1$ is the V-cycle, $p=2$
the W-cycle.}
\label{alg:cycle}
\begin{algorithmic}[1]
\If{$\ell=0$}
  \State \Return $A_0^{-1}g_0$
  \Comment{exact coarsest-level solve ($E_0=0$)}
\EndIf
\For{$i=1,\dots,m$}
  \State $u\leftarrow u+\Lambda_\ell^{-1}\,(g_\ell-A_\ell u)$
  \Comment{Richardson pre-smoothing}
\EndFor
\State $r_{\ell-1}\leftarrow R_{\ell-1}(g_\ell-A_\ell u)$
\Comment{restrict the residual}
\State $e_{\ell-1}\leftarrow0$
\For{$s=1,\dots,p$}
  \State $e_{\ell-1}\leftarrow\mathrm{MG}_{p}(\ell-1,\,r_{\ell-1},\,e_{\ell-1})$
  \Comment{$p$ recursive coarse calls}
\EndFor
\State $u\leftarrow u+P_{\ell-1}e_{\ell-1}$
\Comment{prolongate and correct}
\For{$i=1,\dots,m$}
  \State $u\leftarrow u+\Lambda_\ell^{-1}\,(g_\ell-A_\ell u)$
  \Comment{Richardson post-smoothing}
\EndFor
\State \Return $u$
\end{algorithmic}
\end{algorithm}

In the \emph{variational} (or Galerkin) setting, where the restrictions are
$R_{\ell-1}=P_{\ell-1}^{T}$ and the coarse matrices are the Galerkin
products $A_{\ell-1}=P_{\ell-1}^{T}A_\ell P_{\ell-1}$, the question has a
special character: the coarse-grid correction is then the
$A_\ell$-orthogonal projection onto the complement of the coarse space,
every cycle is a product of energy contractions, and the symmetric V-cycle error
operator automatically satisfies $\norm{E_\ell}_{A_\ell}<1$ on every fixed
hierarchy \cite{mccormick,xu-zikatanov}: divergence is impossible, only the
\emph{uniformity} of the contraction numbers can degenerate, and a long line
of work has pushed uniform V-cycle bounds far below full elliptic
regularity \cite{braess-hackbusch,bpwx,brenner-vcycle,yserentant}.

Cell-centred finite-difference and finite-volume discretisations break this
variational structure in a specific, well-understood way.  With the natural
piecewise-constant prolongation $P$ (each coarse cell is the union of fine
cells, and a coarse value is copied to its children) and the mass-conserving
restriction $R=\half P^{T}$ in one space dimension, the rediscretised coarse
operator $A_{\ell-1}$ does \emph{not} equal $RA_\ell P$; instead the hierarchy
satisfies the \emph{imbalanced Galerkin condition}
\begin{equation}\label{eq:g3-intro}
  R_{\ell-1}A_\ell P_{\ell-1}=2\,A_{\ell-1},
\end{equation}
the factor $2$ being forced by the scaling of piecewise constants
(Figure~\ref{fig:mass}).  This
relation is the foundation of the convergence theory of Bramble, Ewing,
Pasciak and Shen \cite{beps} for cell-centred multigrid: combining
\eqref{eq:g3-intro} with a weak approximation property in the scale introduced
by Bramble and Pasciak \cite{bramble-book,bramble-pasciak}, they proved
uniform convergence of the symmetric W-cycle (for sufficiently many smoothing
steps) and uniform preconditioning bounds for the variable V-cycle.  The
uniform convergence of the \emph{standard} symmetric V-cycle under these
hypotheses was left open, and has remained open: the recent
textbook account of the axiomatic theory \cite{sww} states it explicitly as
an open problem, both in its abstract form
(``the abstract, axiomatic convergence of the standard V-cycle under
condition $(G3)$ remains an open question''
\cite[Remark~6.9.16]{sww}) and again for the cell-centred hierarchy itself
\cite[Remark~8.6.10]{sww}.  Positive results for cell-centred V-cycles
\cite{kwak,kwak-lee,mohr-wienands} all modify the transfer operators so as to
restore an exact or asymptotically exact variational structure, rather than
analyse the natural hierarchy \eqref{eq:g3-intro}.

\begin{figure}[t]
\centering
\includegraphics[width=\linewidth]{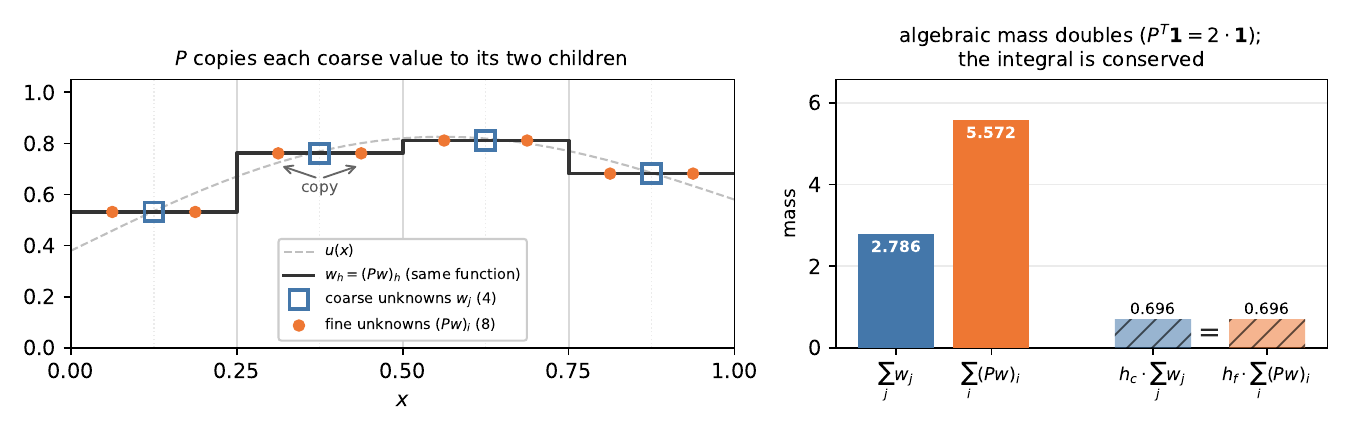}
\caption{The origin of the factor $2$ in \eqref{eq:g3-intro}, illustrated
on $(0,1)$ with four coarse and eight fine cells.  Left: the cell-centred
prolongation $P$ copies each coarse value to its two children, so the
piecewise-constant \emph{function}---and with it its integral---is
unchanged, but every unknown is counted twice.  Right: the algebraic mass
of the unknown vector therefore doubles, $\sum_i(Pw)_i=2\sum_jw_j$
(equivalently $P^{T}\mathbf 1=2\cdot\mathbf 1$), so the mass-conserving
cell-average restriction is $R=\half P^{T}$.  On the energy side, the
jumps of the unchanged function now sit across cells of half the width,
which doubles its discrete Dirichlet energy: in matrix form
$P^{T}A_\ell P=4A_{\ell-1}$ (Proposition~\ref{prop:g3-ode}).  The two
effects combine to the imbalance factor $2$:
$RA_\ell P=\half P^{T}A_\ell P=2A_{\ell-1}$.}
\label{fig:mass}
\end{figure}

This paper settles the question in the negative.  Under the hypotheses of the
cell-centred theory---condition \eqref{eq:g3-intro}, Richardson smoothing with
admissible damping, and the $\alpha$-weak approximation property with
level-independent constants---the symmetric V-cycle need not converge at all:
its spectral radius can exceed $1$ on three levels and can grow geometrically
with the depth of the hierarchy.  The mechanism is geometric and simple to
state: under \eqref{eq:g3-intro} an exact coarse solve \emph{reflects} the
error, in the energy inner product, across the orthogonal complement of the
coarse space instead of projecting it out, and the
V-recursion---unlike the W-recursion---propagates the sign of that
reflection.  The failure is not an artefact of contrived
matrices: it occurs in the textbook cell-centred finite-volume discretisation
of a one-dimensional high-contrast diffusion problem with harmonic interface
averaging, with the optimal Richardson parameter, while the W-cycle on the
same hierarchy contracts uniformly.

\subsection{Main results}\label{ssec:main}

Throughout, $E_\ell^{(p,m)}$ denotes the error propagation operator of the
symmetric cycle with $m$ pre- and $m$ post-smoothing steps and $p$ recursive
coarse calls (Section~\ref{sec:framework}); $E^{V}_\ell:=E^{(1,m)}_\ell$ and
$E^{W}_\ell:=E^{(2,m)}_\ell$.  The hypotheses $(G3)$, $(R)$, $(A2)_\alpha$ are
stated precisely in Definitions \ref{def:g3}--\ref{def:a2}.

\begin{theorem}[Example 1; Section~\ref{sec:ex2}]\label{thm:main-ex2}
For every $m\in\N$ there is a hierarchy of every depth $L\in\N$ satisfying
$(G3)$ with $r=\half$, Richardson admissibility with $C_R=1$, and
$(A2)_\alpha$ for every $\alpha\in(0,1]$ with the sharp level-independent
constant $C_{A2}^{2}=4m$, such that, with
$\theta_m=\bigl(1-\frac1{4m}\bigr)^{2m}>\frac12$,
\[
  \spec\bigl(E^{V}_\ell\bigr)
  =\theta_m\,\frac{(2\theta_m)^{\ell}-1}{2\theta_m-1}
  \;\xrightarrow[\;\ell\to\infty\;]{}\;\infty ;
\]
in particular $\spec(E^{V}_2)=\theta_m(1+2\theta_m)>1$ already on three
levels.  Moreover, on the same family the W-cycle is uniformly contractive,
$\spec(E^{W}_\ell)\le\theta_m<e^{-1/2}$ for all $\ell$ and $m$, at the same
smoothing count.  Finally, if the $V(\mu,\mu)$-cycle on this family is
required to contract uniformly in the depth, then necessarily
$\mu>\frac{\ln 3}{2}\,\bigl(C_{A2}^{2}-1\bigr)$: any smoothing-count threshold
that could rescue the V-cycle must grow at least quadratically in $C_{A2}$.
\end{theorem}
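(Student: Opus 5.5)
The plan is to build the hierarchy so that the V-cycle acts as a scalar recursion on one distinguished mode. The key identity comes from $(G3)$ with $R_{\ell-1}=\half P_{\ell-1}^{T}$. It gives $A_{\ell-1}=\tfrac14P^{T}_{\ell-1}A_\ell P_{\ell-1}$, hence
\[
A_{\ell-1}^{-1}R_{\ell-1}A_\ell=2(P^{T}A_\ell P)^{-1}P^{T}A_\ell .
\]
Therefore $P_{\ell-1}A_{\ell-1}^{-1}R_{\ell-1}A_\ell=2\pi_\ell$, where $\pi_\ell$ is the $A_\ell$-orthogonal projection onto $\operatorname{range}P_{\ell-1}$. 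The symmetric cycle error operator then satisfies
\[
E_\ell^{(p,m)}=S_\ell^m\bigl(I-2\pi_\ell+P_{\ell-1}(E_{\ell-1}^{(p,m)})^{p}A_{\ell-1}^{-1}R_{\ell-1}A_\ell\bigr)S_\ell^m ,
\qquad S_\ell=I-\Lambda_\ell^{-1}A_\ell .
\]
So an exact coarse solve reflects the error rather than projecting it out. Suppose $E_{\ell-1}w=\rho_{\ell-1}w$ and $Pw$ is an eigenvector of $S_\ell$ with eigenvalue $s$. Since $A_{\ell-1}^{-1}RA_\ell Pw=2w$, we get $E^{V}_\ell Pw=s^{2m}(2\rho_{\ell-1}-1)Pw$, and for the W-cycle $E^{W}_\ell Pw=s^{2m}(2\rho_{\ell-1}^2-1)Pw$.

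\textbf{Construction.} Take $n_\ell=\ell+1$ and $A_0=(a)$ with $a>0$. Set $P_{\ell-1}=\sqrt2\,[I;\,0]$ and
\[
A_\ell=\begin{pmatrix}2A_{\ell-1}&0\\0&\Lambda_\ell\end{pmatrix},\qquad \Lambda_\ell=2^{\ell}\cdot4m\,a .
\]
By induction, $A_\ell=2^{\ell}a\,\diag(1,4m,\dots,4m)$. Then $P^{T}A_\ell P=4A_{\ell-1}$, which is $(G3)$ with $r=\half$. Also $\Lambda_\ell=\lambda_{\max}(A_\ell)$, so Richardson admissibility holds with $C_R=1$.

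Everything is diagonal in the coordinate basis. The Richardson factor is $0$ on every component with eigenvalue $\Lambda_\ell$, so those components are annihilated by one smoothing step. On the first coordinate the factor is $s=1-\frac1{4m}$, so $s^{2m}=\theta_m$. Hence $E^{V}_\ell=\diag(\rho_\ell,0,\dots,0)$ with $\rho_0=0$ and $\rho_\ell=\theta_m(2\rho_{\ell-1}-1)$. Since $\rho_\ell<0$ for $\ell\ge1$, we get $|\rho_\ell|=\theta_m(1+2|\rho_{\ell-1}|)$. Summing the geometric recursion gives the closed form $\theta_m\frac{(2\theta_m)^{\ell}-1}{2\theta_m-1}$, and in particular $\theta_m(1+2\theta_m)$ at $\ell=2$.

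The bound $\theta_m>\half$ follows because $(1-\frac1{4m})^{2m}$ increases in $m$ toward $e^{-1/2}$ and equals $9/16$ at $m=1$. For the W-cycle, $\rho^W_\ell=\theta_m(2(\rho^W_{\ell-1})^2-1)$. If $|\rho^W_{\ell-1}|\le1$ then $|2(\rho^W_{\ell-1})^2-1|\le1$, so $|\rho^W_\ell|\le\theta_m<e^{-1/2}$ by induction.

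\textbf{Approximation property.} Next I verify $(A2)_\alpha$ with $C_{A2}^2=4m$. Definition~\ref{def:a2} is of Bramble type, roughly $\norm{(I-P A_{\ell-1}^{-1}R A_\ell)u}^2_{A_\ell^{\ldots}}\le C_{A2}^{2\alpha}\lambda_{\max}^{-\alpha}\norm{u}^{2-\alpha}\cdots$, in the paper's exact scale. On this hierarchy $I-PA^{-1}_{\ell-1}RA_\ell=\diag(-1,\dots,-1,1)$, so every quantity in the definition is an explicit diagonal expression. The worst case is the first coordinate: there the reflected error has full size, and the ratio $\lambda_{\max}/\lambda=4m$ is exactly $C_{A2}^2$, uniformly in $\ell$ and $\alpha\in(0,1]$. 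Evaluating the definition on $e_1$ shows that this constant is sharp. I expect this step to be the main obstacle, because it depends on the precise normalisation in Definition~\ref{def:a2}. The plan is to reduce it to a coordinatewise inequality $\lambda_i^{-\alpha}\cdots\le C^{2\alpha}\Lambda_\ell^{-\alpha}\cdots$, which holds because all eigenvalues lie in $[\Lambda_\ell/(4m),\Lambda_\ell]$.

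\textbf{Threshold.} Run the $V(\mu,\mu)$-cycle on the same family with the same $C_{A2}^2=C^2=4m$. Then $\theta=(1-C^{-2})^{2\mu}$ and $|\rho_\ell|=\theta(1+2|\rho_{\ell-1}|)$. Uniform contraction requires first that $|\rho_\ell|$ stays bounded, which forces $2\theta<1$, and then that the limit $\theta/(1-2\theta)$ is below $1$, which forces $\theta<\frac13$. Writing the condition as $2\mu\ln\frac{C^2}{C^2-1}>\ln3$ and using $\ln(1+\frac1{C^2-1})<\frac1{C^2-1}$ gives $\mu>\frac{\ln3}{2}(C^2-1)$.
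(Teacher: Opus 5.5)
Your argument is correct apart from the $(A2)_\alpha$ step. It uses the paper's mechanism: a diagonal hierarchy with a single algebraically smooth mode, on which $(G3)$ turns the exact coarse correction into the reflection $I-2\PiG_\ell$. This reduces the V- and W-cycles to the scalar recursions $\rho_\ell=\theta_m(2\rho_{\ell-1}-1)$ and $\rho_\ell=\theta_m(2\rho_{\ell-1}^{2}-1)$, and the threshold to the condition $\theta<\frac13$. Your concrete hierarchy ($n_\ell=\ell+1$, $P_{\ell-1}=\sqrt2\,[I;0]$, $A_\ell=2^{\ell}a\,\diag(1,4m,\dots,4m)$) differs from the paper's ($n_\ell=2^{\ell}$, $P_{\ell-1}=2[I;0]$, $A_\ell=\diag(\tau,1,\dots,1)$, $\Lambda_\ell=1$) but is equally valid. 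It satisfies $P_{\ell-1}^{T}A_\ell P_{\ell-1}=4A_{\ell-1}$, $\Pt_\ell=\diag(2,\dots,2,0)$ and $K_\ell=\diag(1-\frac1{4m},0,\dots,0)$, so your recursions hold. Your monotonicity argument for $\theta_m>\frac12$ and your direct derivation of $\mu>\frac{\ln3}{2}(C_{A2}^{2}-1)$ are equivalent to the paper's Bernoulli-type bound and its contrapositive.

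The step you leave unexecuted is $(A2)_\alpha$, and there you misstate the definition. Its left-hand side is the indefinite form $|\ip{(I-\Pt_\ell)v}{v}_{A_\ell}|$, not a squared norm. Its right-hand side, $C_{A2}^{2\alpha}\spec(A_\ell)^{-\alpha}\norm{A_\ell v}^{2\alpha}\norm{v}_{A_\ell}^{2(1-\alpha)}$, is a product of powers of two sums. So a coordinatewise inequality does not settle it by itself: you would also need the triangle inequality on the left and H\"older's inequality with exponents $1/\alpha$, $1/(1-\alpha)$ to reassemble the right. As written, your claim that the first coordinate is the worst case is unsupported.

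The clean verification is global:
\begin{itemize}
\item Because $I-\Pt_\ell$ is a diagonal sign matrix commuting with $A_\ell$, you have $|\ip{(I-\Pt_\ell)v}{v}_{A_\ell}|\le\norm{v}_{A_\ell}^{2}$.
\item On every level $\ell\ge1$ you have $\norm{v}_{A_\ell}^{2}\le\lambda_{\min}(A_\ell)^{-1}\norm{A_\ell v}^{2}$, with $\spec(A_\ell)/\lambda_{\min}(A_\ell)=4m$.
\item Apply the second bound to the factor $(\norm{v}_{A_\ell}^{2})^{\alpha}$ in $\norm{v}_{A_\ell}^{2}=(\norm{v}_{A_\ell}^{2})^{\alpha}(\norm{v}_{A_\ell}^{2})^{1-\alpha}$. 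This gives $(A2)_\alpha$ with $C_{A2}^{2}=4m$ for every $\alpha\in(0,1]$.
\item At $v=e_1$ both sides equal $\lambda_{\min}(A_\ell)$ for every $\alpha$, which gives sharpness.
\end{itemize}
With this inserted, your proof is complete.
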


\begin{theorem}[Example 2; Section~\ref{sec:ex3}]\label{thm:main-ex3}
Fix $\kappa\ge3$ and let $(A_\ell)_{\ell\ge0}$ be the cell-centred
finite-volume hierarchy on dyadic meshes of $(-1,1)$ for the equation
$-(Du')'=f$, $u(\pm1)=0$, with $D=1$ on $(-1,0)$, $D=\kappa$ on $(0,1)$, and
harmonic (Samarskii) edge averaging, coarsened by aggregation of adjacent
cell pairs with $R=\half P^{T}$.  Then:
\begin{enumerate}[label=(\roman*),itemsep=1pt,topsep=2pt]
\item the hierarchy satisfies $(G3)$ exactly on every level;
\item it satisfies $(A2)_{1/2}$, and hence $(A2)_\alpha$ for every
  $\alpha\in(0,\half]$, with a level-independent constant
  $C_{A2}=O(\kappa)$;
\item Richardson smoothing is admissible with $C_R=2$, e.g.\ with
  $\Lambda_\ell=4\kappa h_\ell^{-2}$;
\item for \emph{every} admissible choice of Richardson parameters
  $\Lambda_\ell\ge\spec(A_\ell)$---in particular for the optimal choice
  $\Lambda_\ell=\spec(A_\ell)$---the symmetric $V(1,1)$-cycle diverges
  geometrically:
  \[
    \spec\bigl(E^{V}_\ell\bigr)\;\ge\;(q_2+1)\,\mu_\kappa^{\,\ell-2}-1
    \;\xrightarrow[\;\ell\to\infty\;]{}\;\infty,
    \qquad
    \mu_\kappa=2\Bigl(1-\frac{2}{3\kappa}\Bigr)^{2}>1,
  \]
  with $q_2+1>0$.  Numerically, for $\kappa=100$ the spectral radius
  of the $V(1,1)$ operator passes $2.5$ at level $7$ ($n=128$ cells) and
  thereafter grows per level by a factor decreasing towards
  $\mu_\kappa\approx1.98$ ($2.13$ at level $9$, $1.99$ at level $12$), while
  $\spec(E^{W}_\ell)\le0.993$ on all levels tested; uniform contractivity of
  the W-cycle on this hierarchy, for every $m\ge1$, follows from
  (i)--(iii) via \cite[Theorem~6.9.15]{sww} (Remark~\ref{rem:w-ode}).
\end{enumerate}
\end{theorem}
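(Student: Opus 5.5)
The proof treats the four items separately; (i)--(iii) are direct computations and (iv) carries the weight.

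For (i), I would write $A_\ell$ edge by edge. On a mesh of width $h_\ell$, each interior edge $e$ contributes $D_e h_\ell^{-2}(\delta_i-\delta_j)(\delta_i-\delta_j)^T$. For edges inside a subdomain, $D_e\in\{1,\kappa\}$. At the interface edge $x=0$, which is a mesh node on every level, $D_e=2\kappa/(1+\kappa)$, independently of $\ell$. The boundary edges contribute $2D h_\ell^{-2}$ on the diagonal. Under $P$, every fine edge interior to a coarse aggregate is annihilated, since $P w$ is constant across it. Every remaining fine edge is a coarse edge carrying the same $D_e$ and the same ghost factor $2$. Hence $P^TA_\ell P$ has coarse coefficients $D_e h_\ell^{-2}=4D_e h_{\ell-1}^{-2}$, that is $P^TA_\ell P=4A_{\ell-1}$, and $R A_\ell P=2A_{\ell-1}$.

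For (iii), Gershgorin gives $\spec(A_\ell)\le 4\kappa h_\ell^{-2}$; this holds because the harmonic interface coefficient is at most $2\kappa$ and the boundary rows are at most $3\kappa h_\ell^{-2}$. The admissibility inequality with $C_R=2$ then follows from $\Lambda_\ell\le 4\kappa h_\ell^{-2}\le 2\cdot(\text{spectral scale})$, as in Definition~\ref{def:richardson}.

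For (ii), I would compare $A_\ell$ with the constant-coefficient cell-centred Laplacian $A_\ell^{(1)}$ through $A^{(1)}_\ell\le A_\ell\le \kappa A^{(1)}_\ell$. I would then prove $(A2)_{1/2}$ for the Laplacian hierarchy, with constant $C_0$, using the standard piecewise-constant interpolation estimate. The required inequality has the form $\norm{(I-P A_{\ell-1}^{-1}RA_\ell)v}^2\le C h_\ell^{2}\,\norm{A_\ell v}^{}\cdots$ in the $\alpha=\half$ scale, i.e.\ an $H^1$--$L^2$ duality estimate for the coarse approximation. Transferring it to $A_\ell$ by the two-sided spectral equivalence costs at most one factor of $\kappa$. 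This gives $C_{A2}=O(\kappa)$, and the smaller $\alpha$ follow by interpolation, as recorded after Definition~\ref{def:a2}.

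For (iv), I would use the reflection structure. By (i), $I-P A_{\ell-1}^{-1}RA_\ell=I-2Q_\ell$, where $Q_\ell$ is the $A_\ell$-orthogonal projector onto $\operatorname{range}P$. The V-cycle recursion becomes
\[
E_\ell=S_\ell\bigl(I-2Q_\ell+2P E_{\ell-1}A_{\ell-1}^{-1}\tfrac12P^TA_\ell\bigr)S_\ell ,
\]
where $S_\ell=(I-\Lambda_\ell^{-1}A_\ell)^m$. Since $E_\ell$ is $A_\ell$-self-adjoint, $\spec(E_\ell)\ge |q_\ell|$ for the Rayleigh quotient $q_\ell=\ip{E_\ell v_\ell}{v_\ell}_{A_\ell}/\norm{v_\ell}_{A_\ell}^2$ of any test vector. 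I would choose a nested family $v_\ell=P v_{\ell-1}$ on the coarse grids, built from the discrete $D$-harmonic kink that is linear on the soft side $(-1,0)$ and nearly flat on $(0,1)$. This choice gives $\norm{v_\ell}_{A_\ell}^2=4\norm{v_{\ell-1}}_{A_{\ell-1}}^2$ and a Rayleigh quotient $\ip{A_\ell v}{v}/\norm{v}^2$ of size about $\tfrac{2}{3\kappa}\Lambda_\ell$ relative to any admissible $\Lambda_\ell\ge\spec(A_\ell)$.

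Inserting $v_\ell$ into the recursion, the coarse term reproduces $2q_{\ell-1}$ times the energy, up to the perturbation $S_\ell v_\ell-v_\ell$. The reflection term contributes $-1$ on $\operatorname{range}P$. The aim is the inequality
\[
q_\ell+1\ \ge\ \mu_\kappa\,(q_{\ell-1}+1),\qquad \mu_\kappa=2\bigl(1-\tfrac{2}{3\kappa}\bigr)^2 ,
\]
where the factor $(1-\tfrac{2}{3\kappa})^2$ bounds below the energy that survives one pre- and one post-smoothing step on $v_\ell$. Iterating from $\ell=2$ gives the stated bound, provided the base value satisfies $q_2+1>0$, which I would check by direct computation on three levels.

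The main obstacle is the recursion step. The smoothed vector $S_\ell v_\ell$ leaves $\operatorname{range}P$, and the cross terms must be controlled uniformly over all $\Lambda_\ell\ge\spec(A_\ell)$. My first attempt would split $S_\ell v_\ell=v_\ell-\Lambda_\ell^{-1}A_\ell v_\ell$. I would then bound the out-of-range component in energy by $\Lambda_\ell^{-1}\norm{A_\ell v_\ell}$, which is small because $v_\ell$ is $D$-harmonic away from the interface cell. The worst case is $\Lambda_\ell=\spec(A_\ell)$, since increasing $\Lambda_\ell$ only brings $S_\ell$ closer to $I$.
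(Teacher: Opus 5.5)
Your argument for (i) is the same as the paper's: edges interior to an aggregate are annihilated by $P$, the outer edges become coarse edges with the injected coefficients, and $h_\ell^{-2}=4h_{\ell-1}^{-2}$. The level-$0$ cell, with ghost coefficients $1$ and $\kappa$, is covered in the same way. Parts (ii) and (iv), however, each contain a genuine gap.

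\textbf{(ii).} The transfer from the constant-coefficient hierarchy via $A^{(1)}_\ell\le A_\ell\le\kappa A^{(1)}_\ell$ has no mechanism behind it.
\begin{itemize}
\item By Lemma~\ref{lem:structure}(d), the left-hand side of $(A2)_{1/2}$ is the energy-splitting defect $\norm{(I-\PiG_\ell)v}^2_{A_\ell}-\norm{\PiG_\ell v}^2_{A_\ell}$. This is a difference of two comparable quantities, built from the $A_\ell$-orthogonal projection onto $\operatorname{range}P$. Spectral equivalence says nothing about the smallness of such a difference, nor about how the $A_\ell$- and $A^{(1)}_\ell$-projections are related.
\item The right-hand side involves $\norm{A_\ell v}$, and squares of spectrally equivalent matrices need not be spectrally equivalent. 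For $v=A_\ell^{-1}\mathbf 1$ one has $\norm{A_\ell v}=n_\ell^{1/2}$. The $O(1)$ jump of the discrete slope at the interface, however, makes $\norm{A^{(1)}_\ell v}$ of order $h_\ell^{-1}$. The ratio therefore grows like $h_\ell^{-1/2}$, and no $\kappa$-dependent constant absorbs it.
\item Your displayed target is also the wrong object. $I-\Pt_\ell$ is an $A_\ell$-isometric reflection, so $(I-\Pt_\ell)v\approx-v$ for smooth $v$ and no norm of it is small. The hypothesis concerns the quadratic form.
\end{itemize}
The variable coefficient has to be handled directly. The paper sets $f=A_\ell v$ and writes $h_\ell\ip{(I-\Pt_\ell)v}{v}_{A_\ell}=(f_h,v_h-z_h)_{L^2}$, where $v$ and $z=\Pi_{\ell-1}v$ are the fine and coarse finite-volume solutions for the same piecewise-constant source. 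The exact flux is then piecewise linear, so harmonic averaging gives closed-form flux errors (Lemma~\ref{lem:flux-exact}). The chain $v_h\to I_hu\to u\to I_Hu\to z_h$ then yields $3h_\ell\norm{A_\ell v}\norm{v}_{A_\ell}+4h_\ell^2\norm{A_\ell v}^2$ with absolute constants. The contrast $\kappa$ enters only through $h_\ell\le2\sqrt\kappa\,\spec(A_\ell)^{-1/2}$.

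\textbf{(iv).} You place the obstacle in the wrong spot. That $S_\ell v_\ell$ leaves $\operatorname{range}P$ is harmless: by Lemma~\ref{lem:structure}(b), for every $w$,
\[
  \ip{\bigl(I-P(I-E_{\ell-1})\Pi_{\ell-1}\bigr)w}{w}_{A_\ell}
  =\norm{w}^2_{A_\ell}-2\ip{(I-E_{\ell-1})z}{z}_{A_{\ell-1}},
  \qquad z=\Pi_{\ell-1}w .
\]
What closes the recursion is that $\Pi_{\ell-1}K_\ell v_\ell$ be \emph{exactly} proportional to $v_{\ell-1}$.

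A perturbative split $z=\beta v_{\ell-1}+\zeta$ fails. The cross terms $2\beta\ip{E_{\ell-1}v_{\ell-1}}{\zeta}_{A_{\ell-1}}+\ip{E_{\ell-1}\zeta}{\zeta}_{A_{\ell-1}}$ are controlled only by $\norm{E_{\ell-1}}_{A_{\ell-1}}$. That is the geometrically growing quantity itself, and you have no bound on it in terms of $q_{\ell-1}$. Meanwhile $\zeta$ is at best of relative size $O(1/\kappa)$ and does not decay in $\ell$.

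Your test vector does not fit either. Nestedness forces $v_\ell$ to be a fixed staircase, so $A_\ell v_\ell$ sits at the jumps with size $h_\ell^{-2}$ and is not small by $D$-harmonicity. Moreover, jumps at the interface or on the stiff side transport with a different factor than soft-side jumps, which destroys proportionality.

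The paper instead uses the left-quarter indicator $u_\ell$, whose jumps ($x=-1$ via the ghost cell, and $x=-\half$) lie in the soft region.
\begin{itemize}
\item A stencil computation gives $R_{\ell-1}A_\ell^2u_\ell=4h_\ell^{-2}A_{\ell-1}u_{\ell-1}$.
\item Hence $\Pi_{\ell-1}K_\ell u_\ell=\beta_\ell u_{\ell-1}$ with $\beta_\ell=2-4h_\ell^{-2}/\Lambda_\ell$ (Lemma~\ref{lem:quarter-transport}).
\item This gives $q_\ell=\frac{\beta_\ell^2}{2}(1+q_{\ell-1})-\gamma_\ell$, where $\gamma_\ell=\norm{K_\ell u_\ell}^2_{A_\ell}/\norm{u_\ell}^2_{A_\ell}\in[0,1]$.
\end{itemize}
So $1-\frac{2}{3\kappa}$ is a lower bound for $\beta_\ell/2$, the shrinkage of the transported coarse vector. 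It is not a bound on the energy surviving smoothing, and the Rayleigh quotient $3h_\ell^{-2}/M_\ell$ of $u_\ell$ is not of size $\Lambda_\ell/\kappa$. The loss $1-\gamma_\ell$ has the favourable sign and is simply discarded.

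The base case must hold for all admissible $\Lambda_\ell$ and all $\kappa\ge3$, so a direct computation is not enough. It follows because $\norm{E^V_1}_{A_1}\le\norm{K_1}^2_{A_1}<1$, which makes $I-E^V_1$ $A_1$-positive definite, together with $\norm{K_2u_2}_{A_2}<\norm{u_2}_{A_2}$.

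\textbf{Smaller points.}
\begin{itemize}
\item Both $C_R=2$ and $\beta_\ell\ge2-\frac{4}{3\kappa}$ need $\spec(A_\ell)\ge3\kappa h_\ell^{-2}$ for $\ell\ge2$ and $\spec(A_1)\ge2\kappa h_1^{-2}$. You never state these; they follow from the diagonal entry $e_n^TA_\ell e_n$ (Lemma~\ref{lem:rho-bounds}).
\item For Gershgorin, the relevant fact is that every edge coefficient, including $\eta$, is at most $\kappa$. Bounding it by $2\kappa$ would only give $6\kappa h_\ell^{-2}$.
\item Your displayed recursion has a spurious factor $2$: in fact $I-P(I-E_{\ell-1})\Pi_{\ell-1}=I-2\PiG_\ell+PE_{\ell-1}\Pi_{\ell-1}$.
\end{itemize}
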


The two constructions share one mechanism, made transparent by condition
$(G3)$.  Under \eqref{eq:g3-intro}, the coarse-grid correction operator is
exactly twice the orthogonal projection $\PiG_\ell$ onto the coarse space with
respect to the $A_\ell$ inner product (Lemma~\ref{lem:structure}).  Thus the
two-level correction $I-2\PiG_\ell$ is an isometric \emph{reflection} in that
inner product---an exact coarse solve never decreases the energy of the error.
The W-cycle survives because its recursion squares the inner error operator,
destroying the sign of the reflection; the V-cycle composes with it linearly,
and the factor $2$ in \eqref{eq:g3-intro} then amplifies the coarse defect by
up to $2$ per level, along an explicitly tracked vector in both examples.

We emphasise three points of scope.  First, the counterexamples concern the
\emph{fixed} symmetric V-cycle; they are entirely consistent with the known
uniform bounds for the variable V-cycle \cite{beps,bramble-book} and for
Krylov-accelerated (K-)cycles \cite{notay-vassilevski}.  Second,
Theorem~\ref{thm:main-ex2} shows that uniform V-cycle convergence fails for
every smoothing count $\mu\lesssim\frac{\ln3}{2}C_{A2}^2$, but our examples
contract again once $\mu$ is much larger than $C_{A2}^{2}$; whether some
superquadratic threshold $m_0(C_R,C_{A2},\alpha)$ restores uniform V-cycle
contraction remains an interesting open problem
(Section~\ref{sec:discussion}).  Third,
Example~2 places the failure inside the cell-centred finite-volume class of
\cite[Chapter~8]{sww}, though with a variable, mesh-aligned coefficient: no
convergence theorem based on the axioms alone can cover the natural
cell-centred hierarchy; for the \emph{constant-coefficient} instance behind
\cite[Remark~8.6.10]{sww} itself we claim no divergence---numerically its
V-cycle contracts with spectral radii below $0.35$ on all levels
tested---so deciding it requires information beyond the axioms
(Section~\ref{sec:discussion}).

\subsection{The axiomatic question and the reading of the
hypotheses}\label{ssec:axiomatic}

Because the question we answer is an axiomatic one, it is worth being
precise about its shape.  The V-cycle itself is never abstract: it is the
concrete recursion \eqref{eq:mg-recursion} with $p=1$.  What is axiomatic is
the convergence \emph{theory} built around it, in the tradition of
\cite{bramble-book,bramble-pasciak,bpx} and developed in textbook form in
\cite{sww}: theorems of the schema ``let $(A_\ell,P_\ell,R_\ell,
\Lambda_\ell)$ be \emph{any} hierarchy satisfying the listed conditions;
then the cycle contracts uniformly in the number of levels''.  The
hypotheses are the entire interface---such a theorem is not permitted to know
whether the matrices arise from a PDE---and they divide the labour in a
definite way: $(R)$ carries the whole notion of smoothing strength (one
Richardson sweep damps the eigencomponent at eigenvalue $\lambda$ by
$1-\lambda/\Lambda_\ell$, so ``oscillatory'' means large energy relative to
$\spec(A_\ell)$ and nothing else); $(G3)$ governs the communication between
levels; and $(A2)_\alpha$ is the hinge between the two, promising that the
coarse-grid correction handles a vector with a defect measured against
$\norm{A_\ell v}$---on the same spectral scale the smoother uses.

A disproof accordingly needs exactly one legal instance of the axioms on
which the V-cycle fails, and Example~1 is engineered to be an extremal
instance rather than a representative one.  Its smoother is the
\emph{best} one the axioms admit ($\Lambda_\ell=\spec(A_\ell)$, hence
$C_R=1$): it annihilates the entire upper spectrum in a single sweep and is,
as $(R)$ permits, nearly inert on a near-kernel component---the component
which, by the design of the theory, is the coarse grid's responsibility, and
which $(A2)$ certifies the coarse space represents (in Example 1, exactly:
the bad vector lies in the range of $P$).  No smoothing hypothesis of this
type could forbid the configuration, because $(R)$ ties $\Lambda_\ell$ to
$\spec(A_\ell)$ and never to $\lambda_{\min}(A_\ell)$.  That the examples do
not work by quietly disabling the method can be checked within the examples
themselves: on the same matrices, with the same smoother and the same $m$,
the two-grid method contracts and the W-cycle contracts uniformly
(Proposition~\ref{prop:w-dichotomy})---only the V-recursion fails.  Indeed,
under $(G3)$ and $(R)$ alone the two-grid operator always satisfies
$\spec(E^{TG}_\ell)\le\norm{K_\ell}^{2m}_{A_\ell}<1$
(Remark~\ref{rem:two-grid}): divergence is intrinsically a phenomenon of
three or more levels, which is why it sets in exactly at depth two.
Finally, nothing infinite is involved: for each fixed $m$ the three-level
counterexample is a single finite hierarchy of dimensions $1,2,4$, and the
family is infinite only across the smoothing count, as it must be---its
constants grow like $C^{2}_{A2}=4m$, and Corollary~\ref{cor:threshold} shows
that growth of this order is unavoidable for any family defeating every
$m$.

\subsection{Related work}\label{ssec:related}

\emph{Variational theory.}  For nested conforming discretisations with exact
Galerkin coarsening, uniform V-cycle convergence goes
back to Braess and Hackbusch \cite{braess-hackbusch} (full regularity) and
Bramble and Pasciak \cite{bramble-pasciak,bramble-pasciak-93} (partial
regularity); the regularity-free framework of
Bramble, Pasciak, Wang and Xu \cite{bpwx} gives level-dependent V-cycle
and uniform W-cycle and variable V-cycle bounds, and Brenner
\cite{brenner-vcycle} removed full regularity for conforming finite
elements.  The subspace-correction identity
of Xu and Zikatanov \cite{xu-zikatanov} implies that in this setting the
symmetric V-cycle is unconditionally an energy contraction on every fixed
hierarchy, so counterexamples of the type constructed here cannot exist
there: at most the rate may approach $1$, as in the two-grid-versus-V-cycle
studies of Napov and Notay \cite{napov-notay-vcycle,napov-notay-order},
whose piecewise-constant Galerkin example reaches factors $0.997$ at nine
levels but, being variational, can never cross $1$.

\emph{Cell-centred multigrid.}  Multigrid for cell-centred finite differences
with the natural transfers was developed by Wesseling and Khalil
\cite{wesseling-cc,khalil-wesseling} on the basis of the transfer-order rule
$m_p+m_r>2m$ of \cite{hemker}, which the pair $(P,\half P^T)$ meets with
equality---a first warning sign.  The relation \eqref{eq:g3-intro} appears
explicitly in Ewing and Shen \cite{ewing-shen}, in the form
$2A_{k-1}(u,v)=A_k(I_ku,I_kv)$, together with the remark that ``one step of
symmetric linear smoothing in our V-cycle multigrid scheme may fail to be a
contraction''.  The convergence framework built on \eqref{eq:g3-intro} is due
to Bramble, Ewing, Pasciak and Shen \cite{beps}, who verify the hypotheses of
the nonnested multigrid theory of Bramble, Pasciak and Xu \cite{bpx} with
prolongation energy constant $C^{*}=2$ (sharp) and approximation exponent
$\alpha=\half$, and conclude uniform convergence of the W-cycle and uniform
preconditioning bounds for the variable V-cycle.  A self-contained textbook
development of this theory is given in \cite[Chapters~6 and~8]{sww}, whose
labels $(G3)$ and $(A2)$ we follow: there the W-cycle theorem holds for
every $m\ge1$ \cite[Theorem~6.9.15]{sww}, the property $(A2)_{1/2}$ is
proved for the constant-coefficient cell-centred hierarchy
\cite[Theorem~8.6.8]{sww}, and the V-cycle question is posed as open
\cite[Remarks~6.9.16 and~8.6.10]{sww}.  The fixed V-cycle is not
covered: the V-cycle branch of \cite{bpx} requires $C^{*}\le1$ (their
condition (A.2)), the W-cycle branch carries the explicit threshold
$C^{*}<2$ (their (A.5)), and \cite{bpx} warns that beyond (A.2) the cycle
``may no longer be a reducer'', exhibiting for artificially rescaled forms a
W-cycle operator with negative eigenvalues---but no divergent cycle.  As Kwak
put it, for the natural injection ``no conclusion can be drawn for the
V-cycle using standard multigrid theory'' \cite{kwak}.  Positive V-cycle
results for cell-centred discretisations therefore modify the transfers to
restore an energy-non-expansive prolongation: Kwak \cite{kwak} and Kwak and
Lee \cite{kwak-lee,kwak-lee-tri} construct weighted and coefficient-dependent
prolongations with $C^{*}\le1$ (obtaining level-dependent V-cycle bounds and
uniform preconditioners), and Mohr and Wienands \cite{mohr-wienands} survey
the transfer-operator landscape.  Numerical evidence that the \emph{natural}
V-cycle degrades with the number of levels, and can diverge for discontinuous
coefficients, was reported already in \cite{kwak,ewing-shen}; what those
observations left open is whether the published axioms themselves exclude
such behaviour.  Our results show they do not: divergence occurs while
$(G3)$ holds exactly and $(A2)_{1/2}$ holds with verified constants.

\emph{Aggregation and over-correction.}  Piecewise-constant prolongation is
the basic ingredient of aggregation-based algebraic multigrid, and it is well
documented there that plain aggregation yields acceptable two-grid but
degraded V-cycle convergence
\cite{napov-notay-vcycle,muresan-notay,hu-vassilevski-xu}, which is why
aggregation AMG is run with over-correction, smoothed aggregates, AMLI
cycles, or Krylov-accelerated (K-)cycles
\cite{notay-vassilevski,blaheta,braess-amg,vanek-mandel-brezina,axelsson-vassilevski,%
notay-aggregation}.  In that literature the over-correction
factor is kept strictly below $2$ (Braess uses $1.8$, with $\approx2$
reported as optimal but avoided ``to prevent overshooting''
\cite{braess-amg,bolten-etal}), and the V-cycle analyses concern the Galerkin
normalisation, where divergence is impossible.  A classical relative is the
one-dimensional AMG example of Brandt and of Ruge and St\"uben
\cite{tos,ruge-stuben} in which each Galerkin coarse operator drifts from the
rediscretisation by a factor of $2$ and the V-cycle factor degrades like
$1-2^{-L}$---again inside the variational frame, where crossing $1$ is
impossible \cite[Cor.~A.2.1]{tos}.  Condition \eqref{eq:g3-intro}
is precisely aggregation with the exact over-correction factor $2$
(Lemma~\ref{lem:structure}), and our results show that there the V-cycle does
not merely degrade; it diverges, even though two-grid and W-cycle methods
contract and the weak approximation property holds with modest constants.
This sharpens, in the cell-centred normalisation, the general warnings that
a uniform weak approximation property alone does not control multilevel
convergence \cite{brannick-etal} and that leaving the Galerkin setting voids
the variational safety net \cite{falgout-schroder}; published examples of
genuinely divergent V-cycles had so far arisen only in nonsymmetric
space-time (MGRIT) settings \cite{hessenthaler-etal}.

\subsection{Outline}

Section~\ref{sec:framework} fixes the framework, derives the structural
consequences of $(G3)$, and records the positive two-grid and W-cycle results
(proved in Appendix~\ref{app:wcycle} for completeness).
Sections~\ref{sec:ex2} and~\ref{sec:ex3} contain the two counterexamples.
Section~\ref{sec:numerics} reports independent numerical verification of
every claim, including a numerically optimised lower bound for the true
$(A2)_{1/2}$ constants of Example~2 and a robustness study with respect to the
Richardson parameter and the smoothing count.  Section~\ref{sec:discussion}
discusses consequences and open problems.

\section{The axiomatic framework}\label{sec:framework}

On level $\ell\in\{0,\dots,L\}$ let $A_\ell\in\R^{n_\ell\times n_\ell}$ be
symmetric positive definite, let $P_{\ell-1}\in\R^{n_\ell\times n_{\ell-1}}$
have full column rank, and let $R_{\ell-1}\in\R^{n_{\ell-1}\times n_\ell}$.
We write $\ip{u}{v}_{A_\ell}=u^{T}A_\ell v$ and
$\norm{v}_{A_\ell}^{2}=v^{T}A_\ell v$, and $\spec(\cdot)$ for the spectral
radius.

\begin{definition}[Imbalanced Galerkin condition $(G3)$]\label{def:g3}
There is a fixed $r\in(0,1)$ such that for every $\ell\ge1$,
\begin{equation}\label{eq:g3}
  R_{\ell-1}=r\,P_{\ell-1}^{T},
  \qquad
  R_{\ell-1}A_\ell P_{\ell-1}=2\,A_{\ell-1}.
\end{equation}
\end{definition}

In the cell-centred setting of Section~\ref{sec:ex3} one has $r=\half$ and
$P^{T}_{\ell-1}A_\ell P_{\ell-1}=4A_{\ell-1}$; the abstract family of
Section~\ref{sec:ex2} also uses $r=\half$.

\begin{definition}[Richardson smoothing $(R)$]\label{def:richardson}
Each level $\ell\ge1$ carries the smoother
\[
  K_\ell=I_\ell-\Lambda_\ell^{-1}A_\ell,
  \qquad
  \spec(A_\ell)\le\Lambda_\ell\le C_R\,\spec(A_\ell),
\]
with a level-independent constant $C_R\ge1$.  A sequence
$(\Lambda_\ell)_{\ell\ge1}$ satisfying only the lower bound
$\Lambda_\ell\ge\spec(A_\ell)$ is called \emph{admissible}.
\end{definition}

Admissibility is the one-sided half of $(R)$: it makes $K_\ell$ a positive
semidefinite $A_\ell$-contraction but places no upper bound on
$\Lambda_\ell$.  The positive results below use the constant $C_R$; the
divergence result for the finite-volume hierarchy
(Theorem~\ref{thm:ode-divergence}) uses admissibility alone.

Since $0\prec A_\ell\preceq\Lambda_\ell I$, the smoother $K_\ell$ is
symmetric positive semidefinite in the $A_\ell$-inner product and
$\norm{K_\ell}_{A_\ell}=1-\lambda_{\min}(A_\ell)/\Lambda_\ell<1$.

With $m$ pre- and post-smoothing steps and $p\ge1$ recursive coarse calls, the
error propagation operators of the symmetric multigrid cycle are defined by
$E_0=0$ and
\begin{equation}\label{eq:mg-recursion}
  E_\ell
  =K_\ell^{m}\Bigl(I_\ell-P_{\ell-1}\bigl(I_{\ell-1}-E_{\ell-1}^{\,p}\bigr)
   \Pi_{\ell-1}\Bigr)K_\ell^{m},
  \qquad
  \Pi_{\ell-1}=A_{\ell-1}^{-1}R_{\ell-1}A_\ell ,
\end{equation}
so that $p=1$ is the V-cycle and $p=2$ the W-cycle
(Algorithm~\ref{alg:cycle}): $E_\ell$ is the error propagation
operator of one call $u\leftarrow\mathrm{MG}(\ell,g_\ell,u)$, and the exact
coarse solve on level $0$ is encoded by $E_0=0$.  We write
$\Pt_\ell:=P_{\ell-1}\Pi_{\ell-1}$ for the coarse-grid correction operator.

\begin{definition}[Weak approximation property $(A2)_\alpha$]\label{def:a2}
Let $\alpha\in(0,1]$.  The hierarchy satisfies $(A2)_\alpha$ if there is a
level-independent constant $C_{A2}$ such that for all $\ell\ge1$ and all
$v\in\R^{n_\ell}$,
\begin{equation}\label{eq:a2}
  \bigl|\ip{(I_\ell-\Pt_\ell)v}{v}_{A_\ell}\bigr|
  \;\le\;
  \frac{C_{A2}^{2\alpha}}{\spec(A_\ell)^{\alpha}}\,
  \norm{A_\ell v}^{2\alpha}\,
  \norm{v}_{A_\ell}^{2(1-\alpha)} .
\end{equation}
\end{definition}

Condition \eqref{eq:a2} is the natural transcription, to the intergrid
operator of \eqref{eq:mg-recursion}, of the scale of regularity-free
conditions of Bramble and Pasciak \cite{bramble-book,bramble-pasciak}; the
case $\alpha=\half$ is the form established for cell-centred discretisations
in \cite{beps}.  Definitions \ref{def:g3} and \ref{def:a2} follow, in
labels, normalisation, and the two-sided (absolute-value) form, the
textbook formulation of \cite[Assumption~6.9.1 and \S6.9]{sww}; the
absolute value makes our counterexamples stronger, as they satisfy the most
demanding variant of the hypothesis.  In the normalisation of the nonnested framework of
\cite{beps,bpx}, where the level forms carry the physical scaling
($a_\ell(v,w)=h_\ell\,v^{T}A_\ell w$ in one dimension), condition $(G3)$ with
$r=\half$ is exactly the borderline prolongation-energy identity
$a_\ell(Pw,Pw)=2\,a_{\ell-1}(w,w)$, i.e.\ the sharp case $C^{*}=2$ of their
condition (A.2)/(A.5); the normalisation-free formulation is
Lemma~\ref{lem:structure}(a) below.

\subsection{Structure of the coarse-grid correction under \texorpdfstring{$(G3)$}{(G3)}}

\begin{lemma}[Reflection structure]\label{lem:structure}
Assume $(G3)$ and let
\[
  \PiG_\ell
  =P_{\ell-1}\bigl(P_{\ell-1}^{T}A_\ell P_{\ell-1}\bigr)^{-1}
   P_{\ell-1}^{T}A_\ell
\]
denote the $A_\ell$-orthogonal projection onto
$\operatorname{range}(P_{\ell-1})$.  Then, for every $\ell\ge1$:
\begin{enumerate}[label=(\alph*),itemsep=1pt,topsep=2pt]
\item $\Pt_\ell=2\,\PiG_\ell$; in particular $\Pt_\ell^{2}=2\Pt_\ell$, and
  $I-\Pt_\ell=I-2\PiG_\ell$ is $A_\ell$-self-adjoint with
  $(I-\Pt_\ell)^{2}=I$: the two-level coarse-grid correction is an
  $A_\ell$-isometric reflection.
\item For all $x\in\R^{n_{\ell-1}}$, $y\in\R^{n_\ell}$:
  $\ip{P_{\ell-1}x}{y}_{A_\ell}=\tfrac1r\ip{x}{\Pi_{\ell-1}y}_{A_{\ell-1}}$.
\item For all $y\in\R^{n_\ell}$:
  $\norm{\Pi_{\ell-1}y}_{A_{\ell-1}}^{2}
   =2r\,\norm{\PiG_\ell y}_{A_\ell}^{2}\le 2r\norm{y}_{A_\ell}^{2}$.
\item For every $v\in\R^{n_\ell}$,
  \[
    \ip{(I-\Pt_\ell)v}{v}_{A_\ell}
    =\norm{(I-\PiG_\ell)v}_{A_\ell}^{2}
     -\norm{\PiG_\ell v}_{A_\ell}^{2}.
  \]
  Consequently,
  \[
    \bigl|\ip{(I-\Pt_\ell)v}{v}_{A_\ell}\bigr|
    \le \norm{v}^2_{A_\ell}.
  \]
  Condition $(A2)_\alpha$ states that the $A_\ell$-orthogonal energy splitting
  between the coarse space and its complement is even, up to a defect
  controlled by $\norm{A_\ell v}$.
\end{enumerate}
\end{lemma}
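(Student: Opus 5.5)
The whole lemma reduces to one algebraic identity read off from $(G3)$. Substituting $R_{\ell-1}=rP_{\ell-1}^{T}$ into the second half of \eqref{eq:g3} gives
\[
  P_{\ell-1}^{T}A_\ell P_{\ell-1}=\tfrac{2}{r}\,A_{\ell-1},
  \qquad\text{equivalently}\qquad
  A_{\ell-1}^{-1}=\tfrac{2}{r}\bigl(P_{\ell-1}^{T}A_\ell P_{\ell-1}\bigr)^{-1}.
\]
Once this is recorded, each part becomes a short computation with the standard properties of the $A_\ell$-orthogonal projection $\PiG_\ell$.

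For (a), insert the identity into $\Pi_{\ell-1}=A_{\ell-1}^{-1}rP_{\ell-1}^{T}A_\ell$. This gives $\Pi_{\ell-1}=2(P^{T}AP)^{-1}P^{T}A$, and hence $\Pt_\ell=P\Pi_{\ell-1}=2\PiG_\ell$. Since $\PiG_\ell$ is an $A_\ell$-self-adjoint idempotent, we get $\Pt_\ell^2=4\PiG_\ell=2\Pt_\ell$ and $(I-2\PiG_\ell)^2=I-4\PiG_\ell+4\PiG_\ell=I$. Self-adjointness together with being an involution then gives the $A_\ell$-isometry.

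For (b), expand the right-hand side directly:
\[
  \ip{x}{\Pi_{\ell-1}y}_{A_{\ell-1}}
  =x^{T}A_{\ell-1}A_{\ell-1}^{-1}rP^{T}A_\ell y
  =r\,(Px)^{T}A_\ell y .
\]
Dividing by $r$ gives the claim.

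For (c), take $x=\Pi_{\ell-1}y$ in (b). This gives $\norm{\Pi_{\ell-1}y}^2_{A_{\ell-1}}=r\,\ip{P\Pi_{\ell-1}y}{y}_{A_\ell}=2r\,\ip{\PiG_\ell y}{y}_{A_\ell}$. By self-adjointness and idempotence, the last inner product equals $2r\norm{\PiG_\ell y}^2_{A_\ell}$. The inequality follows because an orthogonal projection is a contraction.

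For (d), use the splitting $v=\PiG v+(I-\PiG)v$. Since $(I-2\PiG)v=(I-\PiG)v-\PiG v$, $A_\ell$-orthogonality of the two pieces gives
\[
  \ip{(I-\Pt_\ell)v}{v}_{A_\ell}
  =\norm{(I-\PiG_\ell)v}^2_{A_\ell}-\norm{\PiG_\ell v}^2_{A_\ell}.
\]
The absolute value of a difference of two nonnegative numbers is at most their sum, and by Pythagoras that sum is $\norm{v}^2_{A_\ell}$. The interpretive sentence about $(A2)_\alpha$ needs no proof.

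There is no real obstacle here. The only point needing care is the bookkeeping of the factor $r$: only the product $r\cdot(2/r)=2$ survives in $\Pt_\ell$, while $r$ itself reappears in (b) and (c).
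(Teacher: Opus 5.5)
Your proposal is correct and follows the paper's proof essentially step for step: the rescaled Galerkin identity $P_{\ell-1}^{T}A_\ell P_{\ell-1}=\frac{2}{r}A_{\ell-1}$ drives (a) and (b) in the same way, and (d) is the same Pythagoras argument. The only variation is in (c). There you take $x=\Pi_{\ell-1}y$ in (b) and then use (a), whereas the paper writes $\Pi_{\ell-1}=2W$ with $P_{\ell-1}W=\PiG_\ell$ and expands $W^{T}A_{\ell-1}W$ directly through $(G3)$; both are one-line computations.
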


\begin{proof}
By $(G3)$, $A_{\ell-1}=\half R_{\ell-1}A_\ell P_{\ell-1}
=\frac r2P^{T}_{\ell-1}A_\ell P_{\ell-1}$, so
\[
  \Pt_\ell=P_{\ell-1}A_{\ell-1}^{-1}R_{\ell-1}A_\ell
  =P_{\ell-1}\,\frac2r\bigl(P^{T}A_\ell P\bigr)^{-1}\,r\,P^{T}A_\ell
  =2\,\PiG_\ell ,
\]
which gives (a) since $\PiG_\ell$ is an $A_\ell$-orthogonal projection.  For
(b),
\[
  \ip{P x}{y}_{A_\ell}=x^{T}P^{T}A_\ell y=\tfrac1r\,x^{T}R A_\ell y
  =\tfrac1r\,x^{T}A_{\ell-1}\Pi_{\ell-1}y .
\]
For (c), write
$\Pi_{\ell-1}=\frac2r\bigl(P^TA_\ell P\bigr)^{-1}rP^TA_\ell=2W$, where
$PW=\PiG_\ell$; then
\[
  \norm{\Pi y}^{2}_{A_{\ell-1}}=4\,W^{T}\!A_{\ell-1}W
  =4\,W^{T}\tfrac r2(P^TA_\ell P)W=2r\norm{PW}_{A_\ell}^{2}.
\]
Part (d) follows from (a) and the Pythagoras identity
$\norm{v}^{2}=\norm{\PiG v}^{2}+\norm{(I-\PiG)v}^{2}$.
\end{proof}

Lemma \ref{lem:structure}(a) isolates the mechanism exploited in this paper:
under $(G3)$ an exact coarse solve reflects the error across the
$A_\ell$-orthogonal complement of the coarse space, preserving its energy
exactly.  Two-grid and W-cycle methods still contract for $m$ large because
the smoother contracts and because squaring the coarse-level error operator
neutralises the sign of the reflection.  The V-cycle has no such protection.

\subsection{The positive results: two-grid and W-cycle}

For orientation, and to make the dichotomy with the V-cycle precise, we record
the standard positive result in our setting; a short, self-contained proof is
given in Appendix~\ref{app:wcycle}.  Set
\begin{equation}\label{eq:epsm}
  \varepsilon(m):=C_{A2}^{2\alpha}\Bigl(\frac{C_R}{2m+1}\Bigr)^{\!\alpha}.
\end{equation}

\begin{theorem}[Two-grid and W-cycle convergence under
$(G3)+(R)+(A2)_\alpha$]\label{thm:wcycle}
Assume $(G3)$, $(R)$ and $(A2)_\alpha$ for some $\alpha\in(0,1]$.  Then for
every $\ell\ge1$ the two-grid operator
$E^{TG}_\ell=K_\ell^m(I-\Pt_\ell)K_\ell^m$ satisfies
$\spec(E^{TG}_\ell)\le\varepsilon(m)$.  Moreover, if $m$ is large enough that
$\varepsilon(m)\le\frac18$, the symmetric W-cycle satisfies
\[
  \sup_{\ell\ge1}\;\spec\bigl(E^{W}_\ell\bigr)\;\le\;
  \delta:=\frac{1-\sqrt{1-8\varepsilon(m)}}{4}\;\le\;\frac14 .
\]
\end{theorem}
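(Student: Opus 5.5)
The plan is to work throughout in the $A_\ell$-inner product and to reduce both claims to Rayleigh-quotient bounds for $A_\ell$-self-adjoint operators. For such an operator $S$, $\spec(S)=\sup_{v\ne0}|\ip{Sv}{v}_{A_\ell}|/\norm{v}_{A_\ell}^2$. The steps are a smoothing estimate, then the two-grid bound, then an induction for the W-cycle.

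\emph{Smoothing estimate.} By Lemma~\ref{lem:structure}(a), $I-\Pt_\ell$ is $A_\ell$-self-adjoint. Since $K_\ell$ is $A_\ell$-self-adjoint and positive semidefinite, $E^{TG}_\ell$ is $A_\ell$-self-adjoint too. Fix $v$ and set $w=K_\ell^m v$, so that
\[
\ip{E^{TG}_\ell v}{v}_{A_\ell}=\ip{(I-\Pt_\ell)w}{w}_{A_\ell}.
\]
Apply $(A2)_\alpha$ to $w$. Two facts are then needed. First, $\norm{w}_{A_\ell}\le\norm{v}_{A_\ell}$, which holds because $K_\ell$ is an $A_\ell$-contraction. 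Second,
\[
\norm{A_\ell w}^2/\spec(A_\ell)\le \frac{C_R}{2m+1}\norm{v}_{A_\ell}^2 .
\]
To prove the second, expand $v$ in eigenvectors of $A_\ell$ with eigenvalues $\lambda$, and put $t=\lambda/\Lambda_\ell\in(0,1]$. The component-wise ratio is then $\lambda^2(1-t)^{2m}/(\spec(A_\ell)\lambda)\le C_R\,t(1-t)^{2m}$, using $\Lambda_\ell\le C_R\spec(A_\ell)$. Finally, $\max_{t\in[0,1]}t(1-t)^{2m}\le\frac1{2m+1}$. Combining these gives $|\ip{E^{TG}_\ell v}{v}_{A_\ell}|\le\varepsilon(m)\norm{v}_{A_\ell}^2$, which is the two-grid bound.

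\emph{W-cycle induction.} The claim, proved by induction on $\ell$, is that $E^W_\ell$ is $A_\ell$-self-adjoint with spectrum in $[-\delta,\delta]$. The base case $E_0=0$ is trivial. For the step, write
\[
E_\ell = E^{TG}_\ell + K_\ell^m P_{\ell-1}E_{\ell-1}^2\Pi_{\ell-1}K_\ell^m .
\]
By Lemma~\ref{lem:structure}(b) and the inductive self-adjointness of $E_{\ell-1}$,
\[
\ip{P_{\ell-1}E_{\ell-1}^2\Pi_{\ell-1}w}{y}_{A_\ell}=\tfrac1r\ip{E_{\ell-1}\Pi_{\ell-1}w}{E_{\ell-1}\Pi_{\ell-1}y}_{A_{\ell-1}} .
\]
This expression is symmetric in $(w,y)$, so $E_\ell$ is $A_\ell$-self-adjoint. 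Taking $y=w$ shows the correction term is nonnegative, and it is at most
\[
\tfrac1r\delta^2\norm{\Pi_{\ell-1}w}^2_{A_{\ell-1}}\le 2\delta^2\norm{w}^2_{A_\ell}\le2\delta^2\norm{v}_{A_\ell}^2,
\]
where the middle step uses Lemma~\ref{lem:structure}(c). Hence every Rayleigh quotient of $E_\ell$ lies in $[-\varepsilon(m),\,\varepsilon(m)+2\delta^2]$.

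\emph{Closing the induction.} It remains to check that $\delta$ solves $2\delta^2-\delta+\varepsilon(m)=0$, which is exactly where $\varepsilon(m)\le\frac18$ is needed for a real root. With that, $\varepsilon(m)+2\delta^2=\delta$. Also $\delta\ge\varepsilon(m)$, since $\delta=\varepsilon(m)+2\delta^2$, and $\delta\le\frac14$. So the spectrum of $E_\ell$ lies in $[-\delta,\delta]$.

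The main subtlety is the sign bookkeeping. The reflection $I-2\PiG_\ell$ allows negative Rayleigh quotients, and these are controlled only by $(A2)_\alpha$ through $-\varepsilon(m)$. The squaring $E_{\ell-1}^2$ is what makes the recursive correction one-signed, so it can push the quotients only upward. This is precisely the step that fails for $p=1$.
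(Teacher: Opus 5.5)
Your proof is correct and follows essentially the same route as the paper's proof in Appendix~\ref{app:wcycle}. Both use the spectral smoothing estimate combined with $(A2)_\alpha$ for the two-grid bound. Both then run an induction in which Lemma~\ref{lem:structure}(b,c) turns the recursive term into the nonnegative quantity $\tfrac1r\norm{E_{\ell-1}\Pi_{\ell-1}w}^2_{A_{\ell-1}}\le 2\delta^2\norm{v}^2_{A_\ell}$, closed by $\delta=\varepsilon(m)+2\delta^2$. Starting the induction at $E_0=0$ rather than at the two-grid case $\ell=1$ is only a cosmetic difference.
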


Theorem~\ref{thm:wcycle} requires $m\gtrsim C_R\,(8C_{A2}^{2\alpha})^{1/\alpha}$;
we include it only to make the paper self-contained and the dichotomy with
the V-cycle precise.  Sharper W-cycle statements exist: in exactly the
present axiomatic setting, \cite[Theorem~6.9.15]{sww} gives
$|\ip{E^{W}_\ell v}{v}_{A_\ell}|\le\frac{M}{M+m^{\alpha}}\norm{v}^{2}_{A_\ell}$
for \emph{every} $m\ge1$, with $M=\max\{M_0,m^{\alpha}\}$ for a constant
$M_0=M_0(C_R,C_{A2},\alpha)$; the nonnested theory
of \cite{bpx} (applied to cell-centred hierarchies in \cite{beps}) covers the
borderline energy constant $C^{*}=2$ with few smoothing steps; and Notay
\cite{notay-perturbed} derives uniform W-cycle bounds from a two-grid factor
below $\half$ without any Galerkin relation---while observing that for the
V-cycle the same machinery ``fails to deliver bounds independent of the
number of levels''.  The examples below satisfy these hypotheses with
explicit constants and nevertheless defeat the V-cycle: on the family of
Section~\ref{sec:ex2} the W-cycle contracts uniformly \emph{at the
very smoothing count for which the V-cycle diverges}
(Proposition~\ref{prop:w-dichotomy}); on the finite-volume hierarchy of
Section~\ref{sec:ex3} the same contrast is observed numerically
(Section~\ref{sec:numerics}).

\begin{remark}[Two-grid methods cannot diverge]\label{rem:two-grid}
Theorem~\ref{thm:wcycle} invokes $(A2)_\alpha$ only through
$\varepsilon(m)$.  One unconditional fact deserves emphasis: under $(G3)$
and $(R)$ alone,
\[
  \spec\bigl(E^{TG}_\ell\bigr)
  \le\bigl\|K_\ell^{m}(I-\Pt_\ell)K_\ell^{m}\bigr\|_{A_\ell}
  \le\norm{K_\ell}^{2m}_{A_\ell}\;<\;1,
\]
because $I-\Pt_\ell$ is an $A_\ell$-isometry by
Lemma~\ref{lem:structure}(a) and
$\norm{K_\ell}_{A_\ell}=1-\lambda_{\min}(A_\ell)/\Lambda_\ell<1$.  A
two-grid method satisfying the imbalanced Galerkin condition can therefore
never diverge, however weak the smoother: divergence requires at least one
\emph{inexact} coarse solve whose sign-carrying error the factor $2$ of
$(G3)$ can amplify.
\end{remark}

\subsection{The V-cycle questions}

The following two questions, both asking for bounds independent of the
number of levels, calibrate exactly what the counterexamples show.

\begin{question}[fixed smoothing]\label{q:fixed}
Does $(G3)+(R)+(A2)_\alpha$ imply, for each fixed $m\ge1$, a bound
$\sup_\ell\spec(E^{V}_\ell)\le\delta(C_R,C_{A2},\alpha,m)<1$?
\end{question}

\begin{question}[threshold smoothing]\label{q:threshold}
Is there a function $m_0(C_R,C_{A2},\alpha)$ such that
$(G3)+(R)+(A2)_\alpha$ and $m\ge m_0$ imply
$\sup_\ell\spec(E^{V}_\ell)\le\delta<1$?
\end{question}

Theorems \ref{thm:main-ex2} and~\ref{thm:main-ex3} answer
Question~\ref{q:fixed} negatively for every $m$, with divergence rather than
mere non-uniformity.  Question~\ref{q:threshold} remains open, but
Theorem~\ref{thm:main-ex2} shows any admissible threshold obeys
$m_0\ge\frac{\ln3}{2}(C_{A2}^{2}-1)$; for comparison, the W-cycle threshold in
Theorem~\ref{thm:wcycle} is $m_0^{W}\sim C_R(8C_{A2}^{2\alpha})^{1/\alpha}$,
and \cite[Theorem~6.9.15]{sww} removes the W-cycle threshold altogether.

\section{Example 1: an algebraic family divergent at every depth, for
every fixed \texorpdfstring{$m$}{m}}\label{sec:ex2}

Fix $m\in\N$ and set $\tau=1/(4m)$.  Let $n_0=1$ and $n_\ell=2^{\ell}$, and
define
\[
  A_0=\bigl(\tfrac{\tau}{4}\bigr),
  \qquad
  A_\ell=\diag(\tau,1,\dots,1)\in\R^{n_\ell\times n_\ell}\quad(\ell\ge1),
\]
\[
  P_0=\begin{pmatrix}1\\0\end{pmatrix},
  \qquad
  P_\ell=\begin{pmatrix}2I_{n_\ell}\\0\end{pmatrix}
  \in\R^{n_{\ell+1}\times n_\ell},
  \qquad
  R_\ell=\tfrac12P_\ell^{T}\quad(\ell\ge1),
\]
with $\Lambda_\ell=1=\spec(A_\ell)$.  Every finite truncation is a legal SPD
hierarchy; the depth-$2$ truncation, of dimensions $1,2,4$, is already the
three-level counterexample of Theorem~\ref{thm:main-ex2}.  The smoother
annihilates every coordinate except the first, ``algebraically smooth'' one,
on which it acts as multiplication by $1-\tau$.

\begin{proposition}[Uniform hypotheses]\label{prop:ex2-hyp}
For every fixed $m$ and every depth $L$, the hierarchy satisfies $(G3)$ with
$r=\half$, Richardson admissibility with $C_R=1$, and $(A2)_\alpha$ for every
$\alpha\in(0,1]$ with the level-independent sharp constant
$C_{A2}^{2}=4m$, attained at $v=e_1$ simultaneously for every $\alpha$.
\end{proposition}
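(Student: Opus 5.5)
The plan is to verify each hypothesis by direct computation, exploiting that every $A_\ell$ is diagonal and every $\operatorname{range}(P_{\ell-1})$ is a coordinate subspace. For $(G3)$ I would compute $P_{\ell-1}^{T}A_\ell P_{\ell-1}$ level by level and then use $R_{\ell-1}=\half P_{\ell-1}^T$.

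\begin{itemize}
\item At $\ell=1$ one gets $P_0^TA_1P_0=\tau$, hence $R_0A_1P_0=\tau/2=2\cdot\frac{\tau}{4}=2A_0$.
\item For $\ell\ge2$, $P_{\ell-1}=\bigl(\begin{smallmatrix}2I\\0\end{smallmatrix}\bigr)$ picks out the leading $n_{\ell-1}\times n_{\ell-1}$ block of $A_\ell$. That block is $\diag(\tau,1,\dots,1)=A_{\ell-1}$, so $P^TA_\ell P=4A_{\ell-1}$ and $RA_\ell P=2A_{\ell-1}$.
\end{itemize}

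Richardson admissibility with $C_R=1$ is immediate: for $\ell\ge1$ we have $n_\ell\ge2$, so $A_\ell$ has an entry $1>\tau$. Hence $\spec(A_\ell)=1=\Lambda_\ell$.

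For $(A2)_\alpha$, I would use Lemma~\ref{lem:structure}(a),(d). Since $A_\ell$ is diagonal, the $A_\ell$-orthogonal projection $\PiG_\ell$ onto a coordinate subspace is simply the coordinate projection. This gives an explicit signed expression. For $\ell=1$ it reads $\ip{(I-\Pt_\ell)v}{v}_{A_\ell}=\sum_{i\ge2}v_i^2-\tau v_1^2$. For $\ell\ge2$ it reads $\sum_{i>n_{\ell-1}}v_i^2-\tau v_1^2-\sum_{2\le i\le n_{\ell-1}}v_i^2$.

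In either case the absolute value is at most $\norm{v}_{A_\ell}^2=a+b$, where $a:=\tau v_1^2$ and $b:=\sum_{i\ge2}v_i^2$; this bound also follows from Lemma~\ref{lem:structure}(d). The right-hand side of \eqref{eq:a2}, with $\spec(A_\ell)=1$, equals $C_{A2}^{2\alpha}(\tau a+b)^{\alpha}(a+b)^{1-\alpha}$, because $\norm{A_\ell v}^2=\tau^2v_1^2+b=\tau a+b$.

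Dividing by $(a+b)^{1-\alpha}$ and taking $\alpha$-th roots, \eqref{eq:a2} follows from the $\alpha$-independent inequality $a+b\le C_{A2}^2(\tau a+b)$. Since $\tau<1$, the supremum of $(a+b)/(\tau a+b)$ over $a,b\ge0$ is $1/\tau=4m$, attained at $b=0$. So $C_{A2}^2=4m$ suffices for every $\alpha\in(0,1]$ and every level.

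For sharpness, take $v=e_1$. The left side of \eqref{eq:a2} is $|{-\tau}|=\tau$, and the right side is $C_{A2}^{2\alpha}\tau^{2\alpha}\tau^{1-\alpha}=C_{A2}^{2\alpha}\tau^{1+\alpha}$. Equality forces $C_{A2}^{2\alpha}\ge\tau^{-\alpha}$, i.e.\ $C_{A2}^2\ge4m$, simultaneously for all $\alpha$. The only point needing care is confirming that the leading block of $A_\ell$ reproduces $A_{\ell-1}$ exactly, including the $\tau$ entry. The $\ell=1$ level is the exception, with $A_0=\tau/4$ chosen precisely so that the factor $2$ in $(G3)$ appears; beyond these checks there is no real obstacle.
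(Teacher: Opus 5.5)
Your proposal is correct and follows essentially the same route as the paper. You verify $(G3)$ from the block structure of $P_{\ell-1}$ and $A_\ell$, note $\spec(A_\ell)=1$, and reduce $(A2)_\alpha$ to the $\alpha$-independent bound $\norm{v}_{A_\ell}^2\le\tau^{-1}\norm{A_\ell v}^2$ combined with $|\ip{(I-\Pt_\ell)v}{v}_{A_\ell}|\le\norm{v}_{A_\ell}^2$, with equality at $e_1$.

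The only differences are cosmetic. You obtain $I-\Pt_\ell$ through Lemma~\ref{lem:structure}(a),(d) and coordinate projections, where the paper computes $\Pi_{\ell-1}=(I,0)$ directly. You also phrase the spectral bound coordinatewise rather than through the eigenvalues of $A_{\ell-1}$ lying in $[\tau,1]$. In the sharpness step, it is the inequality at $e_1$, not equality, that forces $C_{A2}^2\ge4m$.
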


\begin{proof}
For $\ell\ge2$ write $v=(x,y)\in\R^{n_{\ell-1}}\times\R^{n_{\ell-1}}$ and
\[
  A_\ell=\begin{pmatrix}A_{\ell-1}&0\\0&I\end{pmatrix},
  \qquad
  P_{\ell-1}=\begin{pmatrix}2I\\0\end{pmatrix},
  \qquad
  R_{\ell-1}=(I,\,0).
\]
Then $R_{\ell-1}A_\ell P_{\ell-1}=2A_{\ell-1}$, which is $(G3)$, and
$\spec(A_\ell)=1$.  The intergrid operators are
\[
  \Pi_{\ell-1}=(I,\,0),
  \qquad
  \Pt_\ell=\begin{pmatrix}2I&0\\0&0\end{pmatrix},
  \qquad
  I-\Pt_\ell=\begin{pmatrix}-I&0\\0&I\end{pmatrix}.
\]
With
\[
  a=-x^{T}A_{\ell-1}x+\norm{y}^{2},
  \qquad
  b=\norm{A_{\ell-1}x}^{2}+\norm{y}^{2},
  \qquad
  c=x^{T}A_{\ell-1}x+\norm{y}^{2},
\]
one has $|a|\le c$, and since every eigenvalue of $A_{\ell-1}$ lies in
$[\tau,1]$, also $c\le\tau^{-1}b$.  Hence, using $\spec(A_\ell)=1$,
\[
  |a|\le c=c^{1-\alpha}c^{\alpha}
  \le\bigl(\tau^{-1}\bigr)^{\alpha}\,b^{\alpha}\,c^{1-\alpha}
  =(4m)^{\alpha}\,\norm{A_\ell v}^{2\alpha}\norm{v}_{A_\ell}^{2(1-\alpha)},
\]
which is \eqref{eq:a2} with $C_{A2}^{2}=\tau^{-1}=4m$.  At $v=e_1$ one has
$|a|=\tau$, $b=\tau^{2}$, $c=\tau$, and the right-hand side of \eqref{eq:a2}
equals $\tau^{-\alpha}\tau^{2\alpha}\tau^{1-\alpha}=\tau$: equality holds for
every $\alpha$.  At $\ell=1$,
\[
  R_0A_1P_0=\tfrac12\,(1,0)\diag(\tau,1)\begin{pmatrix}1\\0\end{pmatrix}
  =\frac{\tau}{2}=2A_0,
\]
and $\Pt_1=P_0A_0^{-1}R_0A_1$ gives $I-\Pt_1=\diag(-1,1)$, so the same
bounds hold verbatim with $a=-\tau v_1^{2}+v_2^{2}$,
$b=\tau^{2}v_1^{2}+v_2^{2}$, $c=\tau v_1^{2}+v_2^{2}$.
\end{proof}

\begin{theorem}[Closed form for the unstable eigenvalue]\label{thm:ex2-closed}
Let $\theta=\theta_m=(1-\frac1{4m})^{2m}$ and let
$E^{V}_\ell$ denote the symmetric $V(m,m)$-cycle operators on the depth-$L$
truncation.  Then for $1\le\ell\le L$,
\[
  E^{V}_\ell=\diag(-\alpha_\ell,\,0,\dots,0),
  \qquad
  \alpha_1=\theta,
  \quad
  \alpha_\ell=\theta(1+2\alpha_{\ell-1})\quad(\ell\ge2),
\]
so that
\[
  \spec\bigl(E^{V}_\ell\bigr)=\alpha_\ell
  =\theta\,\frac{(2\theta)^{\ell}-1}{2\theta-1}\;\longrightarrow\;\infty
  \qquad(\ell\to\infty),
\]
geometrically with ratio $2\theta_m\in(1,2)$; in particular
$\spec(E^{V}_2)=\theta_m(1+2\theta_m)>1$ for every $m\in\N$.
\end{theorem}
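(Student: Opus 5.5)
The proof is an induction on $\ell$ using the recursion \eqref{eq:mg-recursion} with $p=1$ and the explicit block form of the hierarchy from the proof of Proposition~\ref{prop:ex2-hyp}. Since $\Lambda_\ell=1$ and $A_\ell=\diag(\tau,1,\dots,1)$, the smoother is $K_\ell=I-A_\ell=\diag(1-\tau,0,\dots,0)$. Hence $K_\ell^m=\diag((1-\tau)^m,0,\dots,0)$, and for any matrix $M$ the product $K_\ell^mMK_\ell^m$ equals $\diag\bigl((1-\tau)^{2m}M_{11},0,\dots,0\bigr)$. So the whole claim reduces to computing the $(1,1)$ entry of the inner operator $I_\ell-P_{\ell-1}(I_{\ell-1}-E_{\ell-1})\Pi_{\ell-1}$ and multiplying it by $\theta=(1-\tau)^{2m}$.

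\emph{Base case $\ell=1$.} Here $E_0=0$, so the inner operator is $I-\Pt_1=\diag(-1,1)$, as computed in the proof of Proposition~\ref{prop:ex2-hyp}. Its $(1,1)$ entry is $-1$, which gives $E_1^V=\diag(-\theta,0)$ and hence $\alpha_1=\theta$.

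\emph{Inductive step $\ell\ge2$.} Assume $E_{\ell-1}^V=\diag(-\alpha_{\ell-1},0,\dots,0)$. Use $P_{\ell-1}=\binom{2I}{0}$ and $\Pi_{\ell-1}=(I,0)$. The inner operator is then
\[
I_\ell-\begin{pmatrix}2(I-E_{\ell-1})&0\\0&0\end{pmatrix},
\]
whose $(1,1)$ entry is $1-2(1+\alpha_{\ell-1})=-(1+2\alpha_{\ell-1})$. Sandwiching with $K_\ell^m$ gives $E_\ell^V=\diag(-\theta(1+2\alpha_{\ell-1}),0,\dots,0)$, which is the stated recursion.

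\emph{Closing the argument.} The operator is diagonal, so $\spec(E_\ell^V)=|\alpha_\ell|=\alpha_\ell>0$. The recursion $\alpha_\ell=\theta+2\theta\alpha_{\ell-1}$ is a linear recurrence, and with $\alpha_1=\theta$ it solves to the geometric sum $\alpha_\ell=\theta\sum_{k=0}^{\ell-1}(2\theta)^k=\theta\frac{(2\theta)^\ell-1}{2\theta-1}$. The only non-routine point, which I expect to be the main obstacle, is showing $\theta_m>\tfrac12$ for every $m$; this gives $2\theta\in(1,2)$, so $\alpha_\ell\to\infty$ geometrically, and $\theta(1+2\theta)>1$.

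To prove $\theta_m>\tfrac12$, I would use that $\ln\theta_m=2m\ln(1-\frac1{4m})$. The function $x\mapsto x\ln(1-\frac1{4x})$ is increasing in $x$; this follows from the standard monotonicity of $(1-\frac cx)^x$ for $x>c$. Therefore $\theta_m\ge\theta_1=(3/4)^2=9/16>\tfrac12$. The same fact gives $\theta_m<\lim_m\theta_m=e^{-1/2}<1$, hence $2\theta<2$. Then $\theta(1+2\theta)\ge\frac{9}{16}\cdot\frac{17}{8}>1$, because $t\mapsto t(1+2t)$ is increasing on $t>0$.
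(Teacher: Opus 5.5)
Your proof is correct and follows the paper's argument essentially verbatim: the rank-one sandwich by $K_\ell^m=\diag((1-\tau)^m,0,\dots,0)$ reduces the induction to the $(1,1)$ entry $-(1+2\alpha_{\ell-1})$ of the middle factor, exactly as the paper does. The only difference is how you show $\theta_m>\tfrac12$. The paper uses the Bernoulli-type inequality $(1-x)^{2m}>1-2mx$ at $x=\frac1{4m}$. You use the (standard, easily verified) monotonicity of $(1-\frac1{4x})^{x}$, which gives the slightly sharper $\theta_m\ge\theta_1=\frac9{16}$ and also $\theta_m<e^{-1/2}$.
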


\begin{proof}
At level one, $\Pi_0=A_0^{-1}R_0A_1=\frac4\tau\cdot\tfrac12(\tau,0)=(2,0)$,
so $I-P_0\Pi_0=\diag(-1,1)$ and, since $K_1^{m}=\diag((1-\tau)^{m},0)$,
\[
  E^{V}_1=K_1^{m}\bigl(I-P_0\Pi_0\bigr)K_1^{m}=\diag(-\theta,\,0):
  \qquad \alpha_1=\theta.
\]
Let $E^{V}_{\ell-1}=\diag(-\alpha_{\ell-1},0,\dots,0)$.
Then $I-E^{V}_{\ell-1}=\diag(1+\alpha_{\ell-1},1,\dots,1)$, and with
$\Pi_{\ell-1}=(I,0)$, $P_{\ell-1}=(2I,0)^{T}$,
\[
  I-P_{\ell-1}(I-E^{V}_{\ell-1})\Pi_{\ell-1}
  =\diag\bigl(-1-2\alpha_{\ell-1},\,-1,\dots,-1,\,1,\dots,1\bigr).
\]
Since $K_\ell^{m}=\diag((1-\tau)^{m},0,\dots,0)$ annihilates every coordinate
whose $A_\ell$-eigenvalue is $1$, sandwiching leaves
$E^{V}_\ell=\diag(-\theta(1+2\alpha_{\ell-1}),0,\dots,0)$.  The recursion
$\alpha_\ell=\theta+2\theta\alpha_{\ell-1}$ sums to the stated closed form.
It remains to prove $\theta_m>\half$.  The function
$g(x)=(1-x)^{2m}-(1-2mx)$ satisfies $g(0)=0$ and
$g'(x)=2m\bigl(1-(1-x)^{2m-1}\bigr)>0$ for $x\in(0,1)$, so
$\theta_m=(1-\tau)^{2m}>1-2m\tau=\half$; hence $2\theta_m>1$ and
$\alpha_2=\theta_m(1+2\theta_m)>\half\cdot2=1$.
\end{proof}

\begin{remark}\label{rem:ex1-mechanism}
The error
$e_1$ is invisible to the smoother up to the factor $(1-\tau)^{m}$, and the
level-$\ell$ V-cycle returns it \emph{reflected and scaled}:
$E^{V}_\ell e_1=-\alpha_\ell e_1$.  At the next level the coarse correction
applies $I-P(I-E^{V}_\ell)\Pi$, and by $(G3)$ the term $PE^{V}_\ell\Pi$ acts
on $e_1$ as $2\times(-\alpha_\ell)$: the imbalance factor doubles the
reflected coarse defect, producing $-(1+2\alpha_\ell)$, while the smoother
costs only the factor $\theta_m>\half$.  For $m=1$ the three-level radius is
$\spec(E^{V}_2)=\frac{9}{16}\bigl(1+\frac98\bigr)=\frac{153}{128}
\approx1.195$; as $m\to\infty$ it increases towards
$e^{-1/2}(1+2e^{-1/2})\approx1.342$.
\end{remark}

The same one-dimensional reduction yields two further consequences.
First, the W-cycle is uniformly contractive on the entire family
\emph{at the same smoothing count} $m$:

\begin{proposition}[W/V dichotomy on the family]\label{prop:w-dichotomy}
On the depth-$L$ hierarchy of Proposition~\ref{prop:ex2-hyp}, the
symmetric $W(m,m)$-cycle error operators have the form
$E^{W}_\ell=\diag(e_\ell,0,\dots,0)$ with
\[
  e_1=-\theta_m,
  \qquad
  e_\ell=\theta_m\bigl(2e_{\ell-1}^{2}-1\bigr)\quad(\ell\ge2),
\]
and
\[
  \spec\bigl(E^{W}_\ell\bigr)=|e_\ell|\;\le\;\theta_m\;<\;e^{-1/2}\approx0.6065
  \qquad\text{for all } \ell,\,m,\,L .
\]
\end{proposition}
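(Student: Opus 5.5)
The argument mirrors the proof of Theorem~\ref{thm:ex2-closed}, with the V-recursion \eqref{eq:mg-recursion} at $p=1$ replaced by $p=2$; the claimed bound then reduces to an invariance property of a one-dimensional map.

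\emph{Step 1: the structural recursion.} At level one the W-cycle and V-cycle coincide, because $E_0=0$ gives $I-E_0^2=I$. Hence $E^{W}_1=\diag(-\theta_m,0)$, exactly as computed in the proof of Theorem~\ref{thm:ex2-closed}, and $e_1=-\theta_m$.

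For the inductive step, assume $E^{W}_{\ell-1}=\diag(e_{\ell-1},0,\dots,0)$. Then
\[
I-(E^{W}_{\ell-1})^{2}=\diag(1-e_{\ell-1}^{2},1,\dots,1).
\]
Using $\Pi_{\ell-1}=(I,0)$ and $P_{\ell-1}=(2I,0)^{T}$ from the proof of Proposition~\ref{prop:ex2-hyp} (and the analogous $\ell=1$ operators $P_0$, $\Pi_0=(2,0)$ when $\ell=2$), we get
\[
I-P_{\ell-1}\bigl(I-(E^W_{\ell-1})^2\bigr)\Pi_{\ell-1}
=\diag\bigl(1-2(1-e_{\ell-1}^2),-1,\dots,-1,1,\dots,1\bigr).
\]
At $\ell=2$ the product $P_1\Pi_1$ is $\diag(2,2,0,0)$, so the same form holds. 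Now sandwich between $K_\ell^m=\diag((1-\tau)^m,0,\dots,0)$. This kills every coordinate except the first and multiplies the first by $(1-\tau)^{2m}=\theta_m$. The result is $e_\ell=\theta_m(2e_{\ell-1}^2-1)$ with all other entries zero, as claimed.

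\emph{Step 2: the uniform bound.} The spectral radius of a diagonal matrix with a single nonzero entry is $|e_\ell|$. It therefore suffices to show that the interval $[-\theta_m,\theta_m]$ is invariant under $f(x)=\theta_m(2x^2-1)$. For $|x|\le\theta_m$ we have $0\le x^2\le\theta_m^2$, so
\[
2x^2-1\in[-1,\,2\theta_m^2-1].
\]
We need $|2\theta_m^2-1|\le1$, which holds because $0<\theta_m<1$ gives $2\theta_m^2-1\in(-1,1)$. Hence $|f(x)|\le\theta_m$. Since $|e_1|=\theta_m$, induction gives $|e_\ell|\le\theta_m$ for all $\ell$. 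The bound holds on every truncation depth $L$, because $E^{W}_\ell$ depends only on levels $0,\dots,\ell$.

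\emph{Step 3: the constant $e^{-1/2}$.} It remains to show $\theta_m=(1-\frac1{4m})^{2m}<e^{-1/2}$. This follows from $\ln(1-x)<-x$ for $x\in(0,1)$:
\[
2m\ln\bigl(1-\tfrac1{4m}\bigr)<-\tfrac{2m}{4m}=-\tfrac12 .
\]

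The only point needing care is the bookkeeping at $\ell=1,2$, where $P_0$ and $A_0$ have the special form, and checking that the sandwich genuinely isolates the first coordinate. The invariance argument itself is elementary. Note also that the interval bound uses only $\theta_m<1$; the property $\theta_m>\half$, which drives the V-cycle divergence, plays no role here.
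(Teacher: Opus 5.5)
Your proposal is correct and follows the paper's proof. You use the same diagonal recursion, obtained by substituting $E_{\ell-1}^{2}$ for $E_{\ell-1}$ in the computation of Theorem~\ref{thm:ex2-closed}, and the same invariance of $[-\theta_m,\theta_m]$ under $x\mapsto\theta_m(2x^{2}-1)$. Your direct bound via $\ln(1-x)<-x$ is a self-contained replacement for the paper's unproved remark that $\theta_m\uparrow e^{-1/2}$. The parenthetical about $P_0,\Pi_0$ at $\ell=2$ is unnecessary, since that step only involves $P_1$ and $\Pi_1$, which already have the generic form $(2I,0)^{T}$ and $(I,0)$.
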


\begin{proof}
The recursion follows as in Theorem~\ref{thm:ex2-closed} with
$E_{\ell-1}^{2}=\diag(e_{\ell-1}^{2},0,\dots,0)$ in place of $E_{\ell-1}$, the
first diagonal entry of the middle factor being
$1-2(1-e_{\ell-1}^{2})=2e_{\ell-1}^{2}-1$.  If $|e_{\ell-1}|\le\theta_m<1$
then $|2e_{\ell-1}^{2}-1|\le1$, so $|e_\ell|\le\theta_m$; induction starts at
$|e_1|=\theta_m$.  Finally $\theta_m=(1-\frac1{4m})^{2m}$ increases to
$e^{-1/2}$.
\end{proof}

Second, running the $V(\mu,\mu)$-cycle on the hierarchy built with parameter
$m$ (so that $C_{A2}^{2}=4m$ is fixed) replaces $\theta_m$ by
$\theta(\mu)=(1-\frac1{4m})^{2\mu}$ in Theorem~\ref{thm:ex2-closed}.  The
geometric divergence persists as long as $\theta(\mu)>\half$, and uniform
contractivity over all depths requires
$\sup_L\alpha_L=\theta(\mu)/(1-2\theta(\mu))<1$, i.e.\ $\theta(\mu)<\frac13$:

\begin{corollary}[Lower bound for any V-cycle smoothing
threshold]\label{cor:threshold}
On the family of Proposition~\ref{prop:ex2-hyp} with $C_{A2}^{2}=4m$:
\begin{enumerate}[label=(\roman*),itemsep=1pt,topsep=2pt]
\item the $V(\mu,\mu)$-cycle spectral radii diverge geometrically in the
depth whenever $\mu\le\frac{\ln2}{2}\,(4m-1)$;
\item $\sup_L\spec(E^{V}_L)>1$ (no uniform contraction) whenever
$\mu\le\frac{\ln3}{2}\,(4m-1)$.
\end{enumerate}
Consequently, any threshold function $m_0$ answering
Question~\ref{q:threshold} affirmatively must satisfy
$m_0(C_R{=}1,C_{A2},\alpha)>\frac{\ln3}{2}\bigl(C_{A2}^{2}-1\bigr)$ for every
$\alpha\in(0,1]$.
\end{corollary}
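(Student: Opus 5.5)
The plan is to rerun the proof of Theorem~\ref{thm:ex2-closed} with $m$ replaced by $\mu$ in the smoother, while keeping the hierarchy fixed. The matrices are built with $\tau=1/(4m)$, and Proposition~\ref{prop:ex2-hyp} does not involve the smoothing count, so $(G3)$, $C_R=1$ and $C_{A2}^2=4m$ remain valid. Since $K_\ell^\mu=\diag((1-\tau)^\mu,0,\dots,0)$, the same diagonal reduction gives $E^V_\ell=\diag(-\alpha_\ell,0,\dots,0)$ with $\alpha_1=\theta$ and $\alpha_\ell=\theta(1+2\alpha_{\ell-1})$, where now $\theta=\theta(\mu)=(1-\tau)^{2\mu}$. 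The closed form is
\[
\alpha_\ell=\theta\,\frac{(2\theta)^\ell-1}{2\theta-1},
\]
read as $\alpha_\ell=\theta\ell$ when $2\theta=1$. The sequence $\alpha_\ell$ is increasing in $\ell$, because $\alpha_\ell-\alpha_{\ell-1}=\theta(2\theta)^{\ell-1}>0$. For $2\theta<1$ it converges to $\theta/(1-2\theta)$.

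For (i), geometric divergence follows from $2\theta>1$. I would get this from the elementary inequality $\ln(1-\tau)>-\tau/(1-\tau)$ for $\tau\in(0,1)$, which gives
\[
\theta(\mu)>\exp\bigl(-2\mu\tau/(1-\tau)\bigr)=\exp\bigl(-2\mu/(4m-1)\bigr).
\]
If $\mu\le\frac{\ln2}{2}(4m-1)$, the exponent is at least $-\ln2$, so $\theta>\frac12$ strictly, and $\alpha_\ell$ grows like $(2\theta)^\ell$.

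For (ii), the same bound gives $\theta(\mu)>\frac13$ whenever $\mu\le\frac{\ln3}{2}(4m-1)$. It then suffices to show $\sup_\ell\alpha_\ell>1$ whenever $\theta>\frac13$, and I would split into three cases.
\begin{enumerate}[label=(\alph*),itemsep=1pt,topsep=2pt]
\item If $\theta\ge\frac12$, then $\alpha_\ell\to\infty$.
\item If $\frac13<\theta<\frac12$, then $\alpha_\ell\uparrow\theta/(1-2\theta)$, and this limit exceeds $1$ exactly when $\theta>\frac13$. Hence some finite depth $L$ has $\alpha_L>1$.
\end{enumerate}
The depth-$L$ truncation is a legal hierarchy, so $\sup_L\spec(E^V_L)>1$.

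The consequence for $m_0$ follows by contraposition. Suppose $m_0(1,C_{A2},\alpha)$ answered Question~\ref{q:threshold} affirmatively, and take the family with $C_{A2}^2=4m$ for any $m\in\N$. Choosing $\mu=m_0$ forces $m_0>\frac{\ln3}{2}(4m-1)=\frac{\ln3}{2}(C_{A2}^2-1)$. This holds for every $\alpha$, since the constant is the same for all $\alpha\in(0,1]$.

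The only delicate point is strictness at the boundary values of $\mu$. It is handled by the strict logarithm inequality, which gives strict inequalities $\theta>\frac12$ and $\theta>\frac13$ exactly at $\mu=\frac{\ln2}{2}(4m-1)$ and $\mu=\frac{\ln3}{2}(4m-1)$ respectively.
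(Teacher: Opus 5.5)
Your proposal is correct and follows the paper's proof. As there, you replace $\theta_m$ by $\theta(\mu)=(1-\tau)^{2\mu}$ in the diagonal recursion of Theorem~\ref{thm:ex2-closed}, bound $\theta(\mu)$ below via $\ln(1-\tau)\ge-\tau/(1-\tau)=-1/(4m-1)$, and read off either geometric growth or $\sup_L\alpha_L=\theta/(1-2\theta)$. Your use of the \emph{strict} form of the logarithmic inequality is a small improvement: the paper's non-strict version only yields $\theta\ge\frac12$ and $\sup_L\alpha_L\ge1$, whereas the stated geometric divergence and $\sup_L\spec(E^V_L)>1$ need the strict inequalities $\theta>\frac12$ and $\theta>\frac13$ that you obtain.
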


\begin{proof}
With $s=4m$, $\theta(\mu)=(1-\frac1s)^{2\mu}\ge e^{-2\mu/(s-1)}$ because
$\ln(1-\frac1s)\ge-\frac1{s-1}$.  Thus $\theta(\mu)\ge\half$ when
$2\mu\le(s-1)\ln2$, and $\theta(\mu)\ge\frac13$ when $2\mu\le(s-1)\ln3$; in
the latter case $\sup_L\alpha_L=\theta/(1-2\theta)\ge1$ (interpreted as
$+\infty$ for $\theta\ge\half$).
\end{proof}

Figure~\ref{fig:ex2} illustrates the geometric growth of $\alpha_L$ with
the depth and the three-level factor $\alpha_2>1$ across the smoothing
counts.

\begin{figure}[t]
\centering
\includegraphics[width=.47\linewidth]{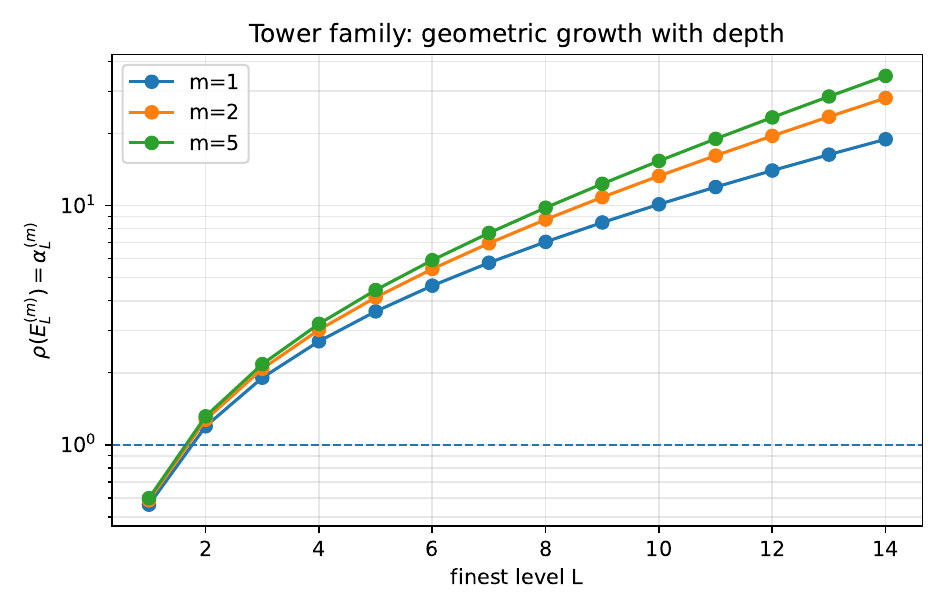}\hfill
\includegraphics[width=.47\linewidth]{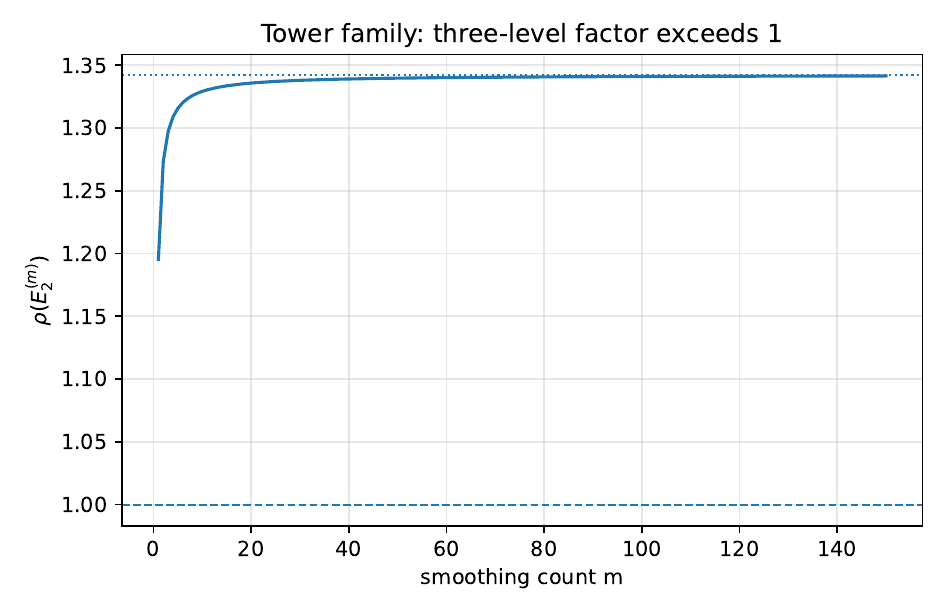}
\caption{Example 1.  Left: for fixed $m$, the unstable eigenvalue
$\alpha_L$ grows geometrically with the depth $L$ (markers: assembled
operators; lines: closed form of Theorem~\ref{thm:ex2-closed}).  Right: the
three-level factor $\alpha_2=\theta_m(1+2\theta_m)$ stays above $1$ for
every $m$.}
\label{fig:ex2}
\end{figure}

\section{Example 2: a cell-centred finite-volume hierarchy with harmonic
interface averaging}\label{sec:ex3}

Example 1 is algebraic.  This section shows that nothing about it
is synthetic: the identical failure occurs in the standard cell-centred
finite-volume discretisation \cite[Chapter~8]{sww} of a one-dimensional
interface problem, with piecewise-constant transfers and the standard
(Samarskii/harmonic) treatment of the coefficient jump.

\subsection{The hierarchy}\label{ssec:ex3-setting}

Fix $\kappa>1$ and consider
\begin{equation}\label{eq:ode}
  -\bigl(D(x)u'(x)\bigr)'=f(x)\quad\text{in }(-1,1),
  \qquad u(-1)=u(1)=0,
  \qquad
  D=\begin{cases}1,&x<0,\\ \kappa,&x>0.\end{cases}
\end{equation}
On level $\ell\ge1$ use the uniform mesh with $n_\ell=2^{\ell}$ cells
$\mathcal C_i=(x_{i-1/2},x_{i+1/2})$ of width $h_\ell=2/n_\ell$ and centres
$x_i$; the interface $x=0$ is a cell edge on every level.  Write
$D_i\in\{1,\kappa\}$ for the (constant) coefficient on $\mathcal C_i$.
Interior edges carry the harmonic average
\begin{equation}\label{eq:harmonic}
  D_{i+1/2}=\frac{2D_iD_{i+1}}{D_i+D_{i+1}}
  =\Bigl(\frac1{h_\ell}\int_{x_i}^{x_{i+1}}\frac{ds}{D(s)}\Bigr)^{-1},
\end{equation}
which at the interface is the Samarskii balance-method coefficient
$\eta:=2\kappa/(1+\kappa)$ \cite{samarskii,egh}; boundary edges carry the
adjacent cell value, $D_{1/2}=D_1$ and $D_{n+1/2}=D_n$.  The discrete operator is the ghost-cell
Dirichlet finite-volume matrix
\begin{equation}\label{eq:fv-matrix}
  (A_\ell u)_i
  =\frac{-D_{i+1/2}(u_{i+1}-u_i)+D_{i-1/2}(u_i-u_{i-1})}{h_\ell^{2}},
  \qquad
  u_0=-u_1,\quad u_{n_\ell+1}=-u_{n_\ell}.
\end{equation}
Coarsening aggregates adjacent cell pairs,
\[
  (P_{\ell-1}w)_{2j-1}=(P_{\ell-1}w)_{2j}=w_j,
  \qquad
  R_{\ell-1}=\tfrac12P_{\ell-1}^{T},
\]
and the coarse operator $A_{\ell-1}$ is the finite-volume matrix
\eqref{eq:fv-matrix} on the coarse mesh, whose edge coefficients are obtained
from the fine ones by every-other-edge injection.  Because the jump sits on a
dyadic edge, each coarse cell on the levels $\ell-1\ge1$ is a single
material, every coarse edge
coefficient again has the form \eqref{eq:harmonic}, and the interface edge
carries the same harmonic value $\eta$ on every level: the coarse operators
are simultaneously \emph{rediscretisations} and---as the next proposition
shows---\emph{exact} imbalanced Galerkin operators.  Level $0$ consists of a
single cell straddling the interface ($n_0=1$, $h_0=2$), whose injected edge
coefficients are $D^{(0)}_{1/2}=1$ and $D^{(0)}_{3/2}=\kappa$, so that
\eqref{eq:fv-matrix} (with both neighbours of the single cell being ghost
cells) gives
\[
  A_0=\frac{2\cdot1+2\cdot\kappa}{h_0^{2}}=\frac{1+\kappa}{2}.
\]
The cycle uses level $0$ only through the exact solve $E_0=0$, but $A_0$
enters $\Pi_0=A_0^{-1}R_0A_1$, and $(G3)$ holds at $\ell=1$ as well, as the
proof below verifies directly.

\subsection{Exact \texorpdfstring{$(G3)$}{(G3)}}

\begin{proposition}[Exact $(G3)$ for the harmonic hierarchy]\label{prop:g3-ode}
For every $\ell\ge1$,
\[
  R_{\ell-1}=\tfrac12P_{\ell-1}^{T},
  \qquad
  R_{\ell-1}A_\ell P_{\ell-1}=2A_{\ell-1},
  \qquad\text{equivalently}\quad
  P_{\ell-1}^{T}A_\ell P_{\ell-1}=4A_{\ell-1}.
\]
\end{proposition}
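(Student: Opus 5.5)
Since $R_{\ell-1}=\half P_{\ell-1}^{T}$ by construction, the two stated identities are equivalent, so I will prove $P_{\ell-1}^{T}A_\ell P_{\ell-1}=4A_{\ell-1}$ by comparing quadratic forms. For a cell-centred matrix of the form \eqref{eq:fv-matrix} with ghost-cell Dirichlet closure, summation by parts gives
\[
  u^{T}A_\ell u=\frac1{h_\ell^{2}}\Bigl(\sum_{i=1}^{n_\ell-1}D_{i+1/2}(u_{i+1}-u_i)^{2}
  +4D_{1/2}u_1^{2}+4D_{n_\ell+1/2}u_{n_\ell}^{2}\Bigr),
\]
because the boundary edges contribute $D_{1/2}(u_1-u_0)^2=4D_{1/2}u_1^2$ on the half-weighted ghost difference. (More precisely, the boundary row reads $(2D_{1/2}u_1+\dots)/h^2$, so the boundary term is $2D_{1/2}u_1^2$; I will fix the constant by checking the boundary row directly.) Both sides are symmetric, so equality of quadratic forms for all $w$ suffices.

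Next I substitute $u=P_{\ell-1}w$. Inside each aggregate the difference $u_{2j}-u_{2j-1}$ vanishes, so only the edges between aggregates survive. These are exactly the fine edges $x_{2j+1/2}$, which coincide with the coarse edges $x_{j+1/2}$. They carry the difference $w_{j+1}-w_j$ and, by the every-other-edge injection, the coefficient $D^{(\ell-1)}_{j+1/2}=D^{(\ell)}_{2j+1/2}$. The boundary cells carry $w_1$ and $w_{n_{\ell-1}}$, with the boundary coefficients also injected. Hence the fine form equals the coarse form with $h_\ell^{-2}$ in place of $h_{\ell-1}^{-2}$, and since $h_{\ell-1}=2h_\ell$ the factor is exactly $4$. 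This yields $w^{T}P^{T}A_\ell Pw=4\,w^{T}A_{\ell-1}w$. The argument applies verbatim at $\ell=1$, where $n_0=1$: there are no interior coarse edges, and the two boundary edges carry $1$ and $\kappa$. As a direct check, $P_0^{T}A_1P_0=(2\cdot1+2\kappa)/h_1^{2}=2(1+\kappa)=4A_0$ with $h_1=1$.

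The main obstacle is bookkeeping rather than an idea. First, the injected coarse edge coefficients must agree with the harmonic rediscretisation on every level. They do, because the interface stays dyadic: each coarse cell for $\ell-1\ge1$ is a single material, so the harmonic average of its neighbours reproduces the injected value ($1$, $\kappa$, or $\eta$ at the interface). Second, the boundary-edge constant must be treated consistently on both levels; it is the same multiple of $D_{1/2}/h^{2}$ on each, so it scales correctly.
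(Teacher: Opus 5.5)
Your proof is correct, but it argues through the quadratic form rather than, as the paper does, through the matrix action. The paper sums rows $2k-1$ and $2k$ of $A_\ell P_{\ell-1}w$, observes that the flux through the internal edge of each aggregate cancels, and identifies the two remaining outer fluxes with row $k$ of $A_{\ell-1}$ up to the factor $h_{\ell-1}^{2}/h_\ell^{2}=4$. It then checks the boundary rows and the single-cell level $\ell=1$ separately.

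You instead write $u^{T}A_\ell u$ as a sum of edge terms and note that $P_{\ell-1}w$ has zero jump across every intra-aggregate edge. The surviving terms (inter-aggregate and boundary edges) are then exactly the coarse edge terms with the injected coefficients, and symmetry of both sides plus polarization gives the matrix identity. Your route is more uniform: each boundary term is just one more edge term carrying the same weight on both levels, so $\ell=1$ needs no special treatment. It is also precisely the energy heuristic in the caption of Figure~\ref{fig:mass}. The paper's row-sum route avoids the detour through polarization and makes the conservative-flux mechanism explicit.

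One small fix: the displayed edge formula should carry $2D_{1/2}u_1^{2}$ and $2D_{n_\ell+1/2}u_{n_\ell}^{2}$, not $4D_{1/2}u_1^{2}$, as your own parenthetical already notes. Summation by parts produces $D_{1/2}(u_1-u_0)\,u_1=2D_{1/2}u_1^{2}$, not the full square $D_{1/2}(u_1-u_0)^{2}$. As you say, the constant does not affect the factor $4$, because it is the same on both levels.
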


\begin{proof}
Write $h=h_\ell$ and let $D_j^{e}$ denote the fine edge coefficients,
$j=\frac12,\frac32,\dots$.  The matrix $P^{T}A_\ell P$ acts on coarse vectors
by first copying to fine cells, then applying $A_\ell$, then summing over
pairs.  Fix an interior coarse cell $k$, consisting of fine cells $2k-1,2k$.
Summing rows $2k-1$ and $2k$ of \eqref{eq:fv-matrix} cancels the flux through
the interior edge $2k-\frac12$ of the pair, leaving only the two outer edges
$2k-\frac32$ and $2k+\frac12$:
\begin{multline*}
  \bigl[(A_\ell Pw)_{2k-1}+(A_\ell Pw)_{2k}\bigr]\\
  =h^{-2}\Bigl[-D^{e}_{2k+1/2}\bigl((Pw)_{2k+1}-(Pw)_{2k}\bigr)
        +D^{e}_{2k-3/2}\bigl((Pw)_{2k-1}-(Pw)_{2k-2}\bigr)\Bigr] .
\end{multline*}
Since $(Pw)_{2k+1}=w_{k+1}$, $(Pw)_{2k}=(Pw)_{2k-1}=w_{k}$,
$(Pw)_{2k-2}=w_{k-1}$, this equals
\[
  \frac{-D^{e}_{2k+1/2}(w_{k+1}-w_k)+D^{e}_{2k-3/2}(w_k-w_{k-1})}{h^{2}} .
\]
The outer fine edges $2k\pm\ldots$ are exactly the coarse edges of cell $k$,
and the injected coarse coefficients are
$D^{e,\,\mathrm{coarse}}_{k+1/2}=D^{e}_{2k+1/2}$.  Multiplying by $\frac14$
converts $h^{-2}$ into $h_{\ell-1}^{-2}=(2h)^{-2}$ and yields precisely row
$k$ of the coarse finite-volume matrix.  At the left boundary, summing rows
$1$ and $2$ leaves the ghost contribution $2D^{e}_{1/2}(Pw)_1/h^{2}$ and the
outer edge $D^{e}_{5/2}$; with $(Pw)_1=w_1$ this becomes, after the factor
$\frac14$, the coarse boundary row with coarse boundary coefficient
$D^{e}_{1/2}=D_1$ (the adjacent coarse cell value, as injected).  The right
boundary is symmetric.  Finally, at $\ell=1$ the single coarse cell receives
both ghost contributions at once: summing the two rows of $A_1$ against
$P_0w=(w,w)^{T}$ cancels the interface edge $\eta$ and leaves
$(2D^{e}_{1/2}+2D^{e}_{5/2})w/h_1^{2}=2(1+\kappa)\,w=4A_0w$, by the
definition of $A_0$ in Subsection~\ref{ssec:ex3-setting}.  Hence
$P^{T}A_\ell P=4A_{\ell-1}$ for every $\ell\ge1$, and multiplying by
$R=\half P^{T}$ gives \eqref{eq:g3}.
\end{proof}

\subsection{Richardson admissibility}\label{ssec:ex3-richardson}

\begin{lemma}\label{lem:rho-bounds}
For every $\ell\ge1$,
$\spec(A_\ell)\le 4\kappa h_\ell^{-2}$.  Moreover
$\spec(A_\ell)\ge3\kappa h_\ell^{-2}$ for $\ell\ge2$, and
$\spec(A_1)\ge2\kappa h_1^{-2}$.  Consequently the choice
$\Lambda_\ell=4\kappa h_\ell^{-2}$ is admissible with
$\Lambda_\ell\le2\,\spec(A_\ell)$ for all $\ell\ge1$ and
$\Lambda_\ell\le\frac43\spec(A_\ell)$ for $\ell\ge2$; in particular $(R)$
holds with $C_R=2$.
\end{lemma}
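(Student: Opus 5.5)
Both bounds reduce to facts about $M_\ell:=h_\ell^{2}A_\ell$, a symmetric tridiagonal matrix with nonnegative edge weights $D_{i+1/2}$. The upper bound will follow from Gershgorin, and the lower bounds from Rayleigh quotients with suitable test vectors. The consequences for $\Lambda_\ell$ are then immediate.

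\emph{Upper bound.} From \eqref{eq:fv-matrix}, row $i$ of $M_\ell$ has diagonal entry $D_{i-1/2}+D_{i+1/2}$ and off-diagonal entries $-D_{i\pm1/2}$. In the two boundary rows the ghost cell adds $2D_1$ or $2D_n$ to the diagonal and contributes no off-diagonal entry. Every edge coefficient is at most $\kappa$: the harmonic mean $\eta=2\kappa/(1+\kappa)$ lies in $[1,\kappa]$, and the others are $1$ or $\kappa$.

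\begin{itemize}
\item An interior row has Gershgorin radius plus centre $2(D_{i-1/2}+D_{i+1/2})\le4\kappa$.
\item A boundary row gives $3D_1+D_{3/2}\le4\kappa$.
\end{itemize}

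Hence $\spec(M_\ell)\le4\kappa$, so $\spec(A_\ell)\le4\kappa h_\ell^{-2}$. This holds for every $\ell\ge1$; at $\ell=1$ the rows are $(2+\eta,-\eta)$ and $(-\eta,2\kappa+\eta)$.

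\emph{Lower bounds.} For $\ell\ge2$, the rightmost cell $n=n_\ell$ and its neighbour $n-1$ both lie in $(0,1)$, where $D=\kappa$. With $v=e_n-e_{n-1}$ we get
\[
v^{T}M_\ell v=(3\kappa)+(2\kappa)+2\kappa=7\kappa,
\]
where the two diagonal entries are $D_{n-1/2}+3D_n=3\kappa+\kappa$ and $D_{n-3/2}+D_{n-1/2}=2\kappa$, wait — more carefully:
\begin{itemize}
\item diagonal $(n,n)$: $D_{n-1/2}+2D_n=3\kappa$;
\item diagonal $(n-1,n-1)$: $D_{n-3/2}+D_{n-1/2}$, where $D_{n-1/2}=\kappa$ and $D_{n-3/2}\ge1$;
\item off-diagonal: $-\kappa$, contributing $+2\kappa$ to the quadratic form since $v_nv_{n-1}=-1$.
\end{itemize}
So $v^{T}M_\ell v\ge3\kappa+(1+\kappa)+2\kappa$, and dividing by $\|v\|^{2}=2$ gives a Rayleigh quotient of at least $3\kappa$. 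This yields $\spec(A_\ell)\ge3\kappa h_\ell^{-2}$. If the sign bookkeeping came out too weak here, the fallback is $v=e_n$, whose quotient is the diagonal entry $3\kappa$ directly; that already suffices.

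For $\ell=1$, apply the same idea to the diagonal: the $(2,2)$ entry is $2\kappa+\eta\ge2\kappa$, so $\spec(A_1)\ge2\kappa h_1^{-2}$.

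\emph{Consequences.} Set $\Lambda_\ell=4\kappa h_\ell^{-2}$. The upper bound gives $\Lambda_\ell\ge\spec(A_\ell)$, so the choice is admissible. The lower bounds give the ratios $\Lambda_\ell/\spec(A_\ell)\le4/3$ for $\ell\ge2$ and $\le2$ for $\ell=1$. Therefore $(R)$ holds with $C_R=2$.

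The only delicate step is the ghost-cell bookkeeping in the boundary rows, which must be checked to produce the diagonal $D_{n-1/2}+2D_n$ used above.
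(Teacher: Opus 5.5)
Your proof is correct and follows the paper's route: Gershgorin row sums for the upper bound, and the diagonal Rayleigh quotients $(A_\ell)_{nn}=3\kappa h_\ell^{-2}$ ($\ell\ge2$) and $(A_1)_{22}=(2\kappa+\eta)h_1^{-2}$ for the lower bounds, which is exactly your fallback $v=e_n$ (your $e_n-e_{n-1}$ test vector also works but is unnecessary). One small fix: the Gershgorin sum for a boundary row is $2D_1+2D_{3/2}$ (centre $2D_1+D_{3/2}$ plus radius $D_{3/2}$), not $3D_1+D_{3/2}$, though it is still at most $4\kappa$, so the conclusion stands.
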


\begin{proof}
Every Gershgorin row sum of \eqref{eq:fv-matrix} is at most
$4\max_jD^{e}_j\,h^{-2}\le4\kappa h^{-2}$, since each harmonic edge value is
bounded by $\kappa$ and the boundary rows carry the weight $2D^{e}$ at the
ghost edge.  For $\ell\ge2$ the two rightmost cells lie in the
$\kappa$-region, so the Rayleigh quotient of $e_{n}$ gives
$\spec(A_\ell)\ge(A_\ell)_{nn}=(D^{e}_{n-1/2}+2D^{e}_{n+1/2})h^{-2}
=3\kappa h^{-2}$.  For $\ell=1$ ($n=2$, $h=1$), the second diagonal entry is
$(\eta+2\kappa)h^{-2}\ge2\kappa h^{-2}$.
\end{proof}

\subsection{The weak approximation property
\texorpdfstring{$(A2)_{1/2}$}{(A2)\_1/2}}\label{ssec:ex3-a2}

This subsection proves that the harmonic hierarchy satisfies $(A2)_{1/2}$
with a level-independent constant.  For the \emph{constant-coefficient}
cell-centred hierarchy this property is established in
\cite[Theorem~8.6.8]{sww}; for variable diffusivity,
\cite[Remark~8.7.3]{sww} notes that the smoothing and approximation
properties are exactly the nontrivial missing pieces.
The proof below supplies the approximation property for mesh-aligned
piecewise-constant coefficients of arbitrary contrast with harmonic
averaging.  Its key observation is that the approximation property only ever
needs to be tested against right-hand sides of the special form $f=A_\ell v$,
whose associated continuous source is \emph{piecewise constant on the cells}.
For such sources the exact flux is piecewise linear, and the consistency
error of the two-point harmonic flux can be computed \emph{in closed form}
(Lemma~\ref{lem:flux-exact}).  All constants below are absolute (they depend
neither on $\ell$, nor on $\kappa$, beyond the normalisation
$\min D=1$); $\kappa$ enters only at the very last step, through
$\spec(A_\ell)\le4\kappa h_\ell^{-2}$.

Throughout this subsection fix a level $\ell\ge2$, write $h=h_\ell$,
$n=n_\ell$, $A=A_\ell$, and let $H=2h$ refer to the coarse level $\ell-1$,
with coarse cells $\mathcal C^{c}_J=\mathcal C_{2J-1}\cup\mathcal C_{2J}$ and
centres $x^{c}_J$.  For a grid vector $w$ let $w_h$ denote the associated
piecewise-constant function, and define the discrete energy
\[
  a_h(w,w):=h\,w^{T}\!Aw
  =\sum_{i=1}^{n-1}\frac{D_{i+1/2}}{h}(w_{i+1}-w_i)^{2}
   +\frac{2D_{1/2}}{h}\,w_1^{2}+\frac{2D_{n+1/2}}{h}\,w_n^{2},
\]
so that $\norm{w_h}_{a_h}=h^{1/2}\norm{w}_{A}$ and, for $f=Aw$,
$\norm{f_h}_{L^{2}}=h^{1/2}\norm{Aw}$.

Given $v\in\R^{n}$, set $f:=Av$, let $f_h$ be the corresponding
piecewise-constant source, and let $u\in H^{1}_0(-1,1)$ solve
$-(Du')'=f_h$ weakly.  The flux
\begin{equation}\label{eq:flux}
  F:=-Du'\in H^{1}(-1,1),
  \qquad
  F'=f_h ,
\end{equation}
is continuous across the interface and \emph{piecewise linear}, with kinks
only at cell edges (the breakpoints of $f_h$; the interface is also an edge).
By construction, $v$ is exactly the finite-volume solution of
\eqref{eq:fv-matrix} with load vector $f$ (the cell averages of $f_h$), and
the coarse vector
\[
  z:=\Pi_{\ell-1}v=A_{\ell-1}^{-1}R_{\ell-1}Av=A_{\ell-1}^{-1}\bar f,
  \qquad
  \bar f:=Rf=\text{(coarse cell averages of }f_h),
\]
is exactly the coarse finite-volume solution for the same source.  Two
identities anchor the proof.  First, by Lemma~\ref{lem:structure}(a,c) (with
$r=\half$),
\begin{equation}\label{eq:T-def}
  T:=\ip{(I-\Pt_\ell)v}{v}_{A}
  =\norm{v}_{A}^{2}-2\norm{z}_{A_{\ell-1}}^{2},
  \qquad
  \norm{z}_{A_{\ell-1}}=\norm{\PiG_\ell v}_{A}\le\norm{v}_{A}.
\end{equation}
Second, testing the two finite-volume problems with their own solutions,
\begin{equation}\label{eq:hT}
  hT=a_h(v,v)-a_H(z,z)
  =(f_h,v_h)_{L^{2}}-(f_h,z_h)_{L^{2}}
  =(f_h,\,v_h-z_h)_{L^{2}} ,
\end{equation}
where $z_h$ is the piecewise-constant function of $z$ on the coarse mesh
(here we used $(\bar f_h,z_h)_{L^2}=(f_h,z_h)_{L^2}$, valid because $z_h$ is
constant on coarse cells).

Let $I_hu=(u(x_i))_i$ and $I_Hu=(u(x^{c}_J))_J$ denote the cell-centre
interpolants, and define the discrete fluxes of a grid vector $w$ by
$G_{i+1/2}(w)=-D_{i+1/2}(w_{i+1}-w_i)/h$ at interior edges and, using the
ghost values $w_0=-w_1$, $w_{n+1}=-w_n$, by
$G_{1/2}(w)=-D_{1/2}(w_1-w_0)/h=-2D_{1/2}w_1/h$ and
$G_{n+1/2}(w)=-D_{n+1/2}(w_{n+1}-w_n)/h=2D_{n+1/2}w_n/h$ at the boundary
edges, so that $(Aw)_i=\bigl(G_{i+1/2}(w)-G_{i-1/2}(w)\bigr)/h$ for all $i$;
the same definitions apply on the coarse mesh with $H$ in place of $h$.  The
\emph{flux consistency errors} of the interpolated exact solution are
\[
  \delta_e:=G_e(I_hu)-F(x_e)
  \quad\text{(fine edges $e$)},
  \qquad
  \delta^{c}_e:=G_e(I_Hu)-F(x_e)
  \quad\text{(coarse edges $e$)} .
\]

The proof now runs in three steps.  First, Lemma~\ref{lem:flux-exact}
computes every flux-consistency error \emph{in closed form}---this is where
the piecewise linearity of $F$ and the harmonic mean enter.  Second,
Lemma~\ref{lem:sbp} converts the flux errors into a residual representation
with a Cauchy--Schwarz master estimate weighted compatibly with the energy,
from which Lemma~\ref{lem:consequences} extracts the four estimates actually
used.  Third, Theorem~\ref{thm:a2-ode} splits $hT=(f_h,v_h-z_h)$ along the
chain $v_h\to I_hu\to u\to I_Hu\to z_h$ and bounds each link.

\begin{lemma}[Exact flux-error formulas]\label{lem:flux-exact}
Let $e=i+\frac12$ be an interior fine edge, with adjacent cells of constant
coefficients $D_L=D_i$, $D_R=D_{i+1}$ and edge value $D_e$ as in
\eqref{eq:harmonic}.  Then
\[
  \delta_e=\frac{D_e\,h}{8}\Bigl(\frac{f_{i+1}}{D_R}-\frac{f_i}{D_L}\Bigr).
\]
At the boundary edges, $\delta_{1/2}=\frac{h}{4}f_1$ and
$\delta_{n+1/2}=-\frac{h}{4}f_n$.  On the coarse mesh the same formulas hold
with $H$ in place of $h$ and with the \emph{fine} source values of the two
fine cells adjacent to the coarse edge; e.g.\ for an interior coarse edge $e$
between coarse cells $J,J+1$,
\[
  \delta^{c}_e=\frac{D_e\,H}{8}
  \Bigl(\frac{f_{2J+1}}{D_{2J+1}}-\frac{f_{2J}}{D_{2J}}\Bigr).
\]
\end{lemma}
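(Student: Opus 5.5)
The plan is to compute $u$ near each edge explicitly, using only that the flux $F=-Du'$ is continuous and linear on each cell with slope $F'=f_h$. Fix an interior fine edge $e=i+\frac12$ at $x_e$, and write $F_e:=F(x_e)$. On the cell $\mathcal C_{i+1}$, between $x_e$ and the centre $x_{i+1}=x_e+h/2$, we have $F(s)=F_e+f_{i+1}(s-x_e)$. Hence
\[
  u(x_{i+1})-u(x_e)=-\frac1{D_R}\int_{x_e}^{x_e+h/2}F\,ds
  =-\frac1{D_R}\Bigl(F_e\frac h2+f_{i+1}\frac{h^2}{8}\Bigr).
\]
Similarly, on $\mathcal C_i$ we have $F(s)=F_e+f_i(s-x_e)$, which gives
\[
  u(x_e)-u(x_i)=-\frac1{D_L}\Bigl(F_e\frac h2-f_i\frac{h^2}{8}\Bigr).
\]
Adding the two increments, $u(x_{i+1})-u(x_i)=-F_e\frac h2\bigl(\frac1{D_L}+\frac1{D_R}\bigr)-\frac{h^2}{8}\bigl(\frac{f_{i+1}}{D_R}-\frac{f_i}{D_L}\bigr)$. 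The harmonic identity \eqref{eq:harmonic} is exactly $\frac h2\bigl(\frac1{D_L}+\frac1{D_R}\bigr)=h/D_e$. Multiplying by $-D_e/h$, the $F_e$ term returns exactly $F_e$, and the remainder is $\frac{D_eh}{8}\bigl(\frac{f_{i+1}}{D_R}-\frac{f_i}{D_L}\bigr)$, as claimed. This single computation is the heart of the lemma: the harmonic mean is precisely what makes the leading flux term consistent.

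For the boundary edge $x_{1/2}=-1$ we use $u(-1)=0$ and the ghost flux $G_{1/2}(I_hu)=-2D_1u(x_1)/h$. Integrating over the half cell gives $u(x_1)=-\frac1{D_1}\bigl(F_{1/2}\frac h2+f_1\frac{h^2}{8}\bigr)$, so $G_{1/2}=F_{1/2}+f_1\frac h4$, that is, $\delta_{1/2}=\frac h4f_1$. This matches the interior formula with $D_e=D_1$ and the ghost cell carrying the reflected source $-f_1$. The right boundary is symmetric: using $u(1)=0$ and $G_{n+1/2}=2D_nu(x_n)/h$, with $u(x_n)=\frac1{D_n}\int_{x_n}^{1}F\,ds=\frac1{D_n}\bigl(F_{n+1/2}\frac h2-f_n\frac{h^2}{8}\bigr)$, we get $\delta_{n+1/2}=-\frac h4f_n$.

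For the coarse mesh we repeat the same integration with half-width $H/2=h$, from the coarse edge to the coarse centre. The subtlety, and the one point needing care, is that $F$ is no longer linear on the whole coarse half-cell: the coarse centre $x^c_J$ is the fine edge $2J-\frac12$. The half-interval from $x_e$ to $x^c_{J+1}$ is exactly the fine cell $\mathcal C_{2J+1}$, and the one to the left is $\mathcal C_{2J}$, on which $f_h$ is constant. So $F$ is linear there with slopes $f_{2J+1}$ and $f_{2J}$ respectively. The same computation, with $h/2$ replaced by $H/2$ and $h^2/8$ replaced by $H^2/8$, then gives the stated formula involving the fine source values.

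It remains to check the coefficients. Because the interface is dyadic, each coarse cell on levels $\ge1$ is a single material, so the adjacent coefficients are $D_{2J}$ and $D_{2J+1}$. The coarse edge value equals their harmonic mean with spacing $H$, since the injected coefficient is that harmonic mean. The coarse boundary edges work identically, with $f_1$ and $f_n$ as the fine values adjacent to the boundary.
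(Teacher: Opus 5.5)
Your proof is correct and is essentially the paper's own computation. The paper writes $G_e(I_hu)=\int_{\omega_e}w_eF\,ds$ with the unit-mass weight $w_e=(D_e/h)/D$, unit mass being exactly the harmonic identity, and integrates $F-F(x_e)$ against it; you instead integrate over the two half-intervals separately and cancel the $F_e$ term with the same identity, and your boundary and coarse-mesh cases (the coarse half-intervals being exactly the fine cells $\mathcal C_{2J}$, $\mathcal C_{2J+1}$, on each of which $F$ is linear, so it is the whole coarse cell, not the half-cell, on which $F$ kinks) match the paper's as well.
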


\begin{proof}
Let $e$ be an interior fine edge with interval $\omega_e=(x_i,x_{i+1})$.
Since $u'=-F/D$,
\[
  G_e(I_hu)=-\frac{D_e}{h}\bigl(u(x_{i+1})-u(x_i)\bigr)
  =\frac{D_e}{h}\int_{\omega_e}\frac{F(s)}{D(s)}\,ds
  =\int_{\omega_e}w_e(s)F(s)\,ds,
\]
where $w_e=(D_e/h)/D\ge0$ satisfies $\int_{\omega_e}w_e=1$ by
\eqref{eq:harmonic}.  Hence
$\delta_e=\int_{\omega_e}w_e(s)\bigl(F(s)-F(x_e)\bigr)ds$.  On $\omega_e$ the
flux $F$ is piecewise linear with its only kink at the midpoint $x_e$ (cell
edges are the only breakpoints of $F'=f_h$, and $\omega_e$ joins two adjacent
cell centres), with slopes $f_i$ on $(x_i,x_e)$ and $f_{i+1}$ on
$(x_e,x_{i+1})$.  Also $D\equiv D_L$ on $(x_i,x_e)$ and $D\equiv D_R$ on
$(x_e,x_{i+1})$.  Therefore
\[
  \delta_e
  =\frac{D_e}{h}\Bigl[\frac{f_i}{D_L}\int_{x_i}^{x_e}(s-x_e)\,ds
   +\frac{f_{i+1}}{D_R}\int_{x_e}^{x_{i+1}}(s-x_e)\,ds\Bigr]
  =\frac{D_e}{h}\Bigl[-\frac{f_i}{D_L}+\frac{f_{i+1}}{D_R}\Bigr]
   \frac{h^{2}}{8}.
\]
At the left boundary, $u(x_1)=u(x_1)-u(-1)=-\frac1{D_1}\int_{-1}^{x_1}F$, so
that $G_{1/2}(I_hu)=-2D_1u(x_1)/h$ equals $\frac2h\int_{-1}^{x_1}F$, and
since $F$ is linear with slope $f_1$ on $(-1,x_1)$ (no interior kink),
$\delta_{1/2}=\frac2h\,f_1\int_{-1}^{x_1}(s+1)\,ds$, i.e.\
$\delta_{1/2}=\frac2h f_1\frac{(h/2)^2}2=\frac h4f_1$; the right boundary is
symmetric with a sign change.  On the
coarse mesh, the interval joining the coarse centres $x^{c}_J,x^{c}_{J+1}$ is
exactly $\mathcal C_{2J}\cup\mathcal C_{2J+1}$ (note that the coarse centre
$x^{c}_J$ coincides with the fine edge separating $\mathcal C_{2J-1}$ from
$\mathcal C_{2J}$), so $F$ again has a single interior kink, located at the
coarse edge, with slopes $f_{2J}$ and $f_{2J+1}$; the injected coarse edge
value equals the integral harmonic mean over this interval because each half
lies in a single material.  The computation is then identical, as is the
boundary case, whose interval $(-1,x^{c}_1)$ is exactly $\mathcal C_1$.
\end{proof}

\begin{lemma}[Residual representation and master estimate]\label{lem:sbp}
Define the fine residual $r:=A\,I_hu-f\in\R^{n}$.  Then
$r_i=(\delta_{i+1/2}-\delta_{i-1/2})/h$, and for every $w\in\R^{n}$,
\begin{equation}\label{eq:sbp}
  h\,w^{T}r
  =-\sum_{e\ \mathrm{int}}\delta_e\,[w]_e
   +\delta_{n+1/2}w_n-\delta_{1/2}w_1,
  \qquad [w]_{i+1/2}:=w_{i+1}-w_i ,
\end{equation}
and hence, by the Cauchy--Schwarz inequality weighted compatibly with $a_h$,
\begin{equation}\label{eq:master}
  |h\,w^{T}r|\le\Theta^{1/2}\,\norm{w_h}_{a_h},
  \qquad
  \Theta:=\sum_{e\ \mathrm{int}}\frac{h}{D_e}\,\delta_e^{2}
  +\frac{h}{2D_{1/2}}\,\delta_{1/2}^{2}
  +\frac{h}{2D_{n+1/2}}\,\delta_{n+1/2}^{2} .
\end{equation}
With the formulas of Lemma~\ref{lem:flux-exact} and $\min_iD_i=1$,
\begin{equation}\label{eq:theta-bound}
  \Theta\le\tfrac14\,h^{2}\norm{f_h}_{L^{2}}^{2},
  \qquad\text{hence}\qquad
  \Theta^{1/2}\le\tfrac12\,h\,\norm{f_h}_{L^{2}} .
\end{equation}
The same statements hold on the coarse mesh, with
$\Theta_c^{1/2}\le h\norm{f_h}_{L^{2}}$.
\end{lemma}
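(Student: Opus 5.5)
The proof has three parts: the residual formula, summation by parts with a weighted Cauchy--Schwarz bound, and the closed-form flux errors of Lemma~\ref{lem:flux-exact}.

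\emph{Residual formula.} Since $u$ solves $-(Du')'=f_h$ with flux $F$, integrating $F'=f_h$ over $\mathcal C_i$ gives $F(x_{i+1/2})-F(x_{i-1/2})=hf_i$. We also have $(A\,I_hu)_i=(G_{i+1/2}(I_hu)-G_{i-1/2}(I_hu))/h$. Subtracting yields $r_i=(\delta_{i+1/2}-\delta_{i-1/2})/h$ directly. One point needs care: the ghost-value definitions of $G_{1/2}$ and $G_{n+1/2}$ must agree with the boundary rows of \eqref{eq:fv-matrix}. This was arranged when the discrete fluxes were defined.

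\emph{Summation by parts.} Multiply by $hw_i$ and sum over $i$. Reindexing $\sum_i w_i(\delta_{i+1/2}-\delta_{i-1/2})$ gives $-\sum_{e\ \mathrm{int}}\delta_e[w]_e+\delta_{n+1/2}w_n-\delta_{1/2}w_1$, which is \eqref{eq:sbp}.

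\emph{Master estimate.} Write each interior term as
\[
  \delta_e[w]_e=\bigl((h/D_e)^{1/2}\delta_e\bigr)\bigl((D_e/h)^{1/2}[w]_e\bigr),
\]
and the boundary terms as
\[
  \delta_{1/2}w_1=\bigl((h/(2D_{1/2}))^{1/2}\delta_{1/2}\bigr)\bigl((2D_{1/2}/h)^{1/2}w_1\bigr),
\]
and similarly at $n+\half$. Cauchy--Schwarz over the combined index set then gives exactly $\Theta^{1/2}$ times the square root of the expression for $a_h(w,w)$, which is \eqref{eq:master}.

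\emph{Bounding $\Theta$.} Insert the formulas of Lemma~\ref{lem:flux-exact}.
\begin{itemize}
\item Interior edges: $\frac{h}{D_e}\delta_e^2=\frac{D_eh^3}{64}\bigl(\frac{f_{i+1}}{D_R}-\frac{f_i}{D_L}\bigr)^2\le\frac{D_eh^3}{32}\bigl(\frac{f_{i+1}^2}{D_R^2}+\frac{f_i^2}{D_L^2}\bigr)$. The harmonic mean satisfies $D_e\le 2\min(D_L,D_R)$, so $D_e/D_R^2\le 2/D_R\le2$ and likewise for $D_L$, using $\min D=1$. Each interior edge therefore contributes at most $\frac{h^3}{16}(f_i^2+f_{i+1}^2)$.
\item Boundary edges: $\frac{h}{2D_1}\cdot\frac{h^2}{16}f_1^2\le\frac{h^3}{32}f_1^2$, and similarly at the right end.
\item Summing: each cell lies on at most two edges, so $\Theta\le\frac{h^3}{8}\sum_i f_i^2=\frac{h^2}{8}\norm{f_h}_{L^2}^2$. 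This is even better than the claimed $\frac14h^2\norm{f_h}^2_{L^2}$.
\end{itemize}

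\emph{Coarse mesh.} The same argument applies with $H=2h$. The coarse flux errors involve the fine source values $f_{2J},f_{2J+1}$, each fine cell appears in at most one coarse edge term, and the coarse boundary terms use $f_1,f_n$. Hence
\[
  \Theta_c\le C\,H^3\sum_i f_i^2=C\,8h^3\sum_i f_i^2=C\,8h^2\norm{f_h}^2_{L^2},
\]
with $C$ the analogous constant of order $1/16$. This gives $\Theta_c\le h^2\norm{f_h}^2_{L^2}$, i.e.\ $\Theta_c^{1/2}\le h\norm{f_h}_{L^2}$. The coarse summation by parts itself uses the coarse energy $a_H$, so it is the same identity.

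The main obstacle is bookkeeping rather than ideas. The boundary weights $2D/h$ have to match the ghost-cell energy, and the constants have to come out no worse than stated. The key inequality is $D_e\le2\min(D_L,D_R)$, which removes all $\kappa$-dependence from the bound on $\Theta$.
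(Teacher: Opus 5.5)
Your proposal is correct and follows the paper's proof essentially step for step: the flux-difference form of the residual, summation by parts, Cauchy--Schwarz with the $a_h$ weights $D_e/h$ and $2D/h$, and the closed-form flux errors combined with $D_e\le2\min(D_L,D_R)$ and $\min_iD_i=1$. There are two minor differences. Your per-cell accounting (the boundary cells collect only $\tfrac{3}{32}h^{3}f_i^{2}$) gives the slightly sharper $\Theta\le\tfrac18h^{2}\norm{f_h}_{L^{2}}^{2}$ in place of the paper's $\tfrac{5}{32}$. In the coarse case your constant is exactly $C=\tfrac1{16}$, since each fine cell enters exactly one coarse edge term; this gives $\Theta_c\le\tfrac12h^{2}\norm{f_h}_{L^{2}}^{2}$, well within the stated bound.
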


\begin{proof}
The residual formula restates $(A I_hu)_i=(G_{i+1/2}(I_hu)-G_{i-1/2}(I_hu))/h$
and $f_i=\frac1h\int_{\mathcal C_i}f_h=(F(x_{i+1/2})-F(x_{i-1/2}))/h$, the
latter by \eqref{eq:flux}.  Identity \eqref{eq:sbp} is summation by parts.
Estimate \eqref{eq:master} is Cauchy--Schwarz with the weights
$D_e/h$ (interior) and $2D/h$ (boundary) of $a_h$.  For
\eqref{eq:theta-bound}: at an interior edge, using
$(x-y)^2\le2x^2+2y^2$ and the harmonic-mean bounds $D_e\le2D_L$,
$D_e\le2D_R$, together with $D_i\ge1$,
\[
  \frac{h}{D_e}\delta_e^{2}
  =\frac{h^{3}D_e}{64}\Bigl(\frac{f_{i+1}}{D_R}-\frac{f_i}{D_L}\Bigr)^{2}
  \le\frac{h^{3}}{32}\,D_e\Bigl(\frac{f_{i+1}^{2}}{D_R^{2}}
   +\frac{f_i^{2}}{D_L^{2}}\Bigr)
  \le\frac{h^{3}}{16}\bigl(f_{i+1}^{2}+f_i^{2}\bigr).
\]
Each cell is adjacent to at most two interior edges, so the interior edges
contribute at most $\frac{h^{3}}{8}\sum_if_i^{2}=\frac{h^{2}}{8}
\norm{f_h}^{2}_{L^{2}}$.  The boundary edges contribute
$\frac{h}{2}\cdot\frac{h^{2}}{16}(f_1^{2}+f_n^{2})
\le\frac{h^{2}}{32}\norm{f_h}^{2}_{L^{2}}$.  Altogether
$\Theta\le\frac{5}{32}h^{2}\norm{f_h}^{2}\le\frac14h^{2}\norm{f_h}^{2}$.  The
coarse computation is identical with $H=2h$, giving
$\Theta_c\le\frac14H^{2}\norm{f_h}^{2}=h^{2}\norm{f_h}^{2}$.
\end{proof}

\begin{lemma}[Consequences]\label{lem:consequences}
With $b:=\norm{f_h}_{L^{2}}$, $x:=\norm{v_h}_{a_h}$, $y:=\norm{\sqrt D
u'}_{L^{2}}$, the following hold:
\begin{enumerate}[label=(\roman*),itemsep=2pt,topsep=2pt]
\item (energy-norm error) $\norm{v_h-(I_hu)_h}_{a_h}\le\tfrac12hb$ and
  $\norm{z_h-(I_Hu)_h}_{a_H}\le hb$;
\item (source pairings)
  $\bigl|(f_h,\,v_h-(I_hu)_h)_{L^{2}}\bigr|\le\tfrac12hb\,x$
  and
  $\bigl|(f_h,\,z_h-(I_Hu)_h)_{L^{2}}\bigr|\le\sqrt2\,hb\,x$;
\item (midpoint identities)
  $(f_h,\,u-(I_hu)_h)_{L^{2}}
   =-\sum_i\frac{f_i^{2}}{D_i}\frac{h^{3}}{24}$, so
  $\bigl|(f_h,u-(I_hu)_h)\bigr|\le\tfrac1{24}h^{2}b^{2}$; and
  $\bigl|(\bar f_h,\,u-(I_Hu)_h)_{L^{2}}\bigr|\le\tfrac{2}{3}H^{2}b^{2}$;
\item (energy comparison) $y\le x+hb$.
\end{enumerate}
\end{lemma}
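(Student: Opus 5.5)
We prove the four items in the order (i), (ii), (iii), (iv). Every pairing of $f_h$ with a discrete vector is rewritten as a discrete energy product, using the residual identity \eqref{eq:sbp} and the master estimate \eqref{eq:master}. Throughout, $f=Av$, so $h\,f^{T}w=h\,v^{T}Aw=a_h(v,w)$, and in particular $(f_h,v_h)_{L^2}=x^{2}$.

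\emph{(i) Energy-norm error.} Put $e:=v-I_hu$. Since $Av=f$ and $A\,I_hu=f+r$, we have $Ae=-r$. Hence
\[
\norm{e_h}_{a_h}^{2}=h\,e^{T}Ae=-h\,e^{T}r\le\Theta^{1/2}\norm{e_h}_{a_h}
\]
by \eqref{eq:master}. Dividing and using \eqref{eq:theta-bound} gives $\norm{e_h}_{a_h}\le\frac12hb$. On the coarse mesh, $z$ solves $A_{\ell-1}z=\bar f$, and by Lemma~\ref{lem:flux-exact} the coarse residual $A_{\ell-1}I_Hu-\bar f$ has exactly the flux-difference form. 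The coarse version of Lemma~\ref{lem:sbp}, with $\Theta_c^{1/2}\le hb$, then yields $\norm{z_h-(I_Hu)_h}_{a_H}\le hb$ by the same argument.

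\emph{(ii) Source pairings.} For the fine pairing,
\[
(f_h,v_h-(I_hu)_h)=h\,f^{T}e=a_h(v,e)\le x\cdot\tfrac12hb .
\]
For the coarse pairing, write $e_c:=z-I_Hu$. Since $e_c$ is constant on coarse cells, $(f_h,(e_c)_h)=(\bar f_h,(e_c)_h)=H\,\bar f^{T}e_c=a_H(z,e_c)$. This is at most $\norm{z_h}_{a_H}\,hb$ by (i). Finally, by \eqref{eq:T-def},
\[
\norm{z_h}_{a_H}^{2}=2h\norm{z}_{A_{\ell-1}}^{2}\le2h\norm{v}_A^{2}=2x^{2},
\]
which gives the factor $\sqrt2$.

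\emph{(iii) Midpoint identities.} On a fine cell, $D\equiv D_i$ and $F$ is linear with slope $f_i$, so $u$ is a quadratic with $u''=-f_i/D_i$. Expanding about $x_i$, the linear term integrates to zero over the symmetric cell. This leaves
\[
\int_{\mathcal C_i}\bigl(u-u(x_i)\bigr)=-\frac{f_i}{2D_i}\cdot\frac{h^{3}}{12}.
\]
Multiplying by $f_i$ and summing gives the identity, and $D_i\ge1$ gives the bound $\frac1{24}h^{2}b^{2}$.

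For the coarse statement we use that the interface is a coarse edge, so $u'$ is continuous inside each coarse cell and $u''=-f/D$ is piecewise constant there. A Taylor expansion about $x^{c}_J$ with integral remainder again kills the linear term. The remainder is bounded by
\[
\int_{\mathcal C^{c}_J}\tfrac12(s-x^{c}_J)^{2}\,|u''|\,ds\le c\,H^{3}\bigl(|f_{2J-1}|+|f_{2J}|\bigr).
\]
Multiplying by $|\bar f_J|$, summing over $J$, and applying Cauchy--Schwarz with $\norm{\bar f_h}_{L^2}\le b$ gives a bound $c'H^{2}b^{2}$. We expect $c'\le\frac23$ with room to spare; tracking this constant is the only fiddly point.

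\emph{(iv) Energy comparison.} Testing the continuous problem with $u$ gives $y^{2}=(f_h,u)$. We split $u=v_h+\bigl((I_hu)_h-v_h\bigr)+\bigl(u-(I_hu)_h\bigr)$ and apply (ii) and (iii):
\[
y^{2}\le x^{2}+\tfrac12hbx+\tfrac1{24}h^{2}b^{2}\le(x+hb)^{2}.
\]
This gives $y\le x+hb$.

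The main obstacle is the coarse half of (i) and (iii). There we must check that the coarse-edge consistency errors of Lemma~\ref{lem:flux-exact}, which involve fine source values, really feed the coarse summation-by-parts identity with load $\bar f$. We must also keep the interface kink of $u'$ on a coarse edge, so that the coarse Taylor argument applies cell by cell.
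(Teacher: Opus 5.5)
Your proposal is correct and follows the paper's proof essentially step for step. The one cosmetic difference is in (ii): you bound $a_h(v,e)$ by Cauchy--Schwarz together with (i), which is equivalent to the paper's direct use of \eqref{eq:master} with $w=v$, because $a_h(v,e)=-h\,v^{T}r$.

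The constant you left open in (iii) closes immediately. Your kernel integrates to $H^{3}/48$ over each fine half of $\mathcal C^{c}_J$. Cauchy--Schwarz with $\sum_J\bar f_J^{2}\le b^{2}/H$ and $\sum_i f_i^{2}=2b^{2}/H$ then gives $c'=\frac1{24}$, comfortably below $\frac23$; the paper's cruder remainder estimate gives $\frac12$.
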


\begin{proof}
(i) Let $e:=v-I_hu$.  Then $Ae=f-A I_hu=-r$, so
$\norm{e_h}^{2}_{a_h}=h\,e^{T}\!Ae=-h\,e^{T}r\le\Theta^{1/2}\norm{e_h}_{a_h}$
by \eqref{eq:master}, and \eqref{eq:theta-bound} gives the claim; coarse case
identical (with $H=2h$, $\Theta_c^{1/2}\le hb$).  Here the coarse residual is
$r_c=A_{\ell-1}I_Hu-\bar f$, using
$\bar f_J=\frac1H\int_{\mathcal C^{c}_J}f_h=(F(x^{c}_{J+1/2})-F(x^{c}_{J-1/2}))/H$.

(ii) $(f_h,e_h)_{L^{2}}=h\,f^{T}e=h\,(Av)^{T}e=h\,v^{T}(Ae)=-h\,v^{T}r$, and
\eqref{eq:master} with $w=v$ gives the first bound.  On the coarse mesh,
$(f_h,z_h-(I_Hu)_h)=(\bar f_h,z_h-(I_Hu)_h)
=H\bar f^{T}(z-I_Hu)=H\,z^{T}A_{\ell-1}(z-I_Hu)=-H\,z^{T}r_c
\le\Theta_c^{1/2}\norm{z_h}_{a_H}$, and
$\norm{z_h}_{a_H}=H^{1/2}\norm{z}_{A_{\ell-1}}
\le(2h)^{1/2}\norm{v}_{A}=\sqrt2\,x$ by \eqref{eq:T-def}.

(iii) On each cell, $u''=-f_i/D_i$ is constant, so the Taylor expansion at
the cell midpoint $x_i$ is exact:
$u(x)-u(x_i)=u'(x_i)(x-x_i)-\frac{f_i}{2D_i}(x-x_i)^{2}$, and the linear term
integrates to zero over the cell:
$\int_{\mathcal C_i}(u-u(x_i))=-\frac{f_i}{2D_i}\cdot\frac{h^{3}}{12}$.
Multiply by $f_i$ and sum.  On coarse cells, $u\in C^{1}$ and $u''=-f_h/D$ is
piecewise constant with $|u''|\le|f_h|$ pointwise ($D\ge1$ and $D$ is
constant on each coarse cell); Taylor at $x^{c}_J$ with integral remainder
gives $|\int_{\mathcal C^{c}_J}(u-u(x^{c}_J))|\le
H^{2}\int_{\mathcal C^{c}_J}|f_h|\cdot\tfrac12$, whence
$|(\bar f_h,u-(I_Hu)_h)|\le\tfrac12H^{2}\sum_J|\bar
f_J|\,\norm{f_h}_{L^{1}(\mathcal C^{c}_J)}\le\tfrac12H^{2}b^{2}$ by
Cauchy--Schwarz; the stated constant is generous.

(iv) $y^{2}=(f_h,u)_{L^{2}}$ (weak form tested with $u$) and
$x^{2}=(f_h,v_h)_{L^{2}}$ (discrete problem tested with $v$).  Hence
$y^{2}-x^{2}=(f_h,u-(I_hu)_h)+(f_h,(I_hu)_h-v_h)
\le\tfrac1{24}h^{2}b^{2}+\tfrac12hbx$ by (ii)--(iii), so
$y^{2}\le x^{2}+\tfrac12hbx+\tfrac1{24}h^{2}b^{2}\le(x+hb)^{2}$.
\end{proof}

\begin{theorem}[$(A2)_{1/2}$ for the harmonic hierarchy]\label{thm:a2-ode}
There are absolute constants $c_1,c_2$ (one may take $c_1=3$, $c_2=4$) such
that for every $\ell\ge2$ and $v\in\R^{n_\ell}$,
\begin{equation}\label{eq:a2-mixed}
  \bigl|\ip{(I-\Pt_\ell)v}{v}_{A_\ell}\bigr|
  \le c_1\,h_\ell\,\norm{A_\ell v}\,\norm{v}_{A_\ell}
   +c_2\,h_\ell^{2}\,\norm{A_\ell v}^{2},
\end{equation}
and consequently, using $h_\ell\le2\sqrt\kappa\,\spec(A_\ell)^{-1/2}$
(Lemma~\ref{lem:rho-bounds}),
\begin{equation}\label{eq:a2-final}
  \bigl|\ip{(I-\Pt_\ell)v}{v}_{A_\ell}\bigr|
  \le\frac{C_{A2}}{\sqrt{\spec(A_\ell)}}\,
   \norm{A_\ell v}\,\norm{v}_{A_\ell},
  \qquad
  C_{A2}=2\sqrt\kappa\,c_1+4\kappa\,c_2 .
\end{equation}
For $\kappa\ge2$ the same bound holds at $\ell=1$ with
$C_{A2}=\spec(A_1)^{1/2}\lambda_{\min}(A_1)^{-1/2}\le3\sqrt\kappa$.  Hence the
hierarchy satisfies $(A2)_{1/2}$---and, by interpolation with
Lemma~\ref{lem:structure}(d), $(A2)_\alpha$ for every
$\alpha\in(0,\tfrac12]$---with a level-independent constant
$C_{A2}=O(\kappa)$, equivalently $C^{2}_{A2}=O(\kappa^{2})$; numerically the
true constants are far smaller (Table~\ref{tab:a2-constants}).
\end{theorem}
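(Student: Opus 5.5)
The plan follows the chain announced before Lemma~\ref{lem:flux-exact}. Write $hT=(f_h,v_h-z_h)_{L^2}$ via \eqref{eq:hT} and split along $v_h\to I_hu\to u\to I_Hu\to z_h$:
\[
  hT=(f_h,v_h-(I_hu)_h)+(f_h,(I_hu)_h-u)+(f_h,u-(I_Hu)_h)+(f_h,(I_Hu)_h-z_h).
\]
The first and last pairings are bounded by Lemma~\ref{lem:consequences}(ii), by $\tfrac12hbx$ and $\sqrt2\,hbx$. The second is at most $\tfrac1{24}h^2b^2$ by (iii). For the third, note that $(I_Hu)_h$ is constant on coarse cells. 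I would therefore first try to replace $f_h$ by $\bar f_h$ there and use the coarse half of (iii), giving $\tfrac23H^2b^2=\tfrac83h^2b^2$.

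This replacement is the main obstacle, since $u$ is not coarse-piecewise-constant, so $(f_h-\bar f_h,u)$ remains. I would bound it by writing $f_h-\bar f_h$ on each coarse cell as a mean-zero function. Pairing it with $u$ equals pairing it with $u-u(x^c_J)-u'(x^c_J)(x-x^c_J)$ plus the linear term. The linear term is $O(H^2)\,|f_{2J}-f_{2J-1}|\,|u'(x^c_J)|$, and $u'=-F/D$ with $F$ controlled through $y$, so this contributes $h\,b\,y$-type terms. The quadratic remainder contributes $O(H^2b^2)$ by the same Taylor argument. With (iv), $y\le x+hb$, every term then has the form $c\,hbx+c'h^2b^2$. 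A cleaner alternative is to compare directly with $u$: $(f_h-\bar f_h,u)=(f_h-\bar f_h,u-\Pi_0u)$, with $\Pi_0$ the coarse $L^2$ projection. Then $\norm{u-\Pi_0u}\le (H/\pi)\norm{u'}\le (H/\pi)y$ on each coarse cell since $D\ge1$, which gives $\le\frac{2h}{\pi}\,b\,y\cdot 2$. I expect this route to work and to give the stated constants after collecting terms.

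Summing yields $|hT|\le c_1hbx+c_2h^2b^2$ with $c_1\le3$ and $c_2\le4$. Next substitute $b=h^{1/2}\norm{Av}$ and $x=h^{1/2}\norm{v}_A$ and divide by $h$, which gives \eqref{eq:a2-mixed}. For \eqref{eq:a2-final}, insert $h\le2\sqrt\kappa\,\spec(A)^{-1/2}$ from Lemma~\ref{lem:rho-bounds}. In the quadratic term, use $h\norm{Av}\le 2\sqrt\kappa\,\spec^{-1/2}\norm{Av}$ together with $\norm{Av}\le\spec^{1/2}\norm{v}_A$. This turns $h^2\norm{Av}^2$ into at most $4\kappa\,\spec^{-1/2}\norm{Av}\norm{v}_A$, giving $C_{A2}=2\sqrt\kappa\,c_1+4\kappa c_2$.

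At $\ell=1$ I would avoid the continuous argument entirely. By Lemma~\ref{lem:structure}(d), $|T|\le\norm{v}_A^2$, and $\norm{v}_A^2\le\lambda_{\min}^{-1/2}\norm{Av}\norm{v}_A$ holds for any SPD matrix. This gives $C_{A2}=(\spec/\lambda_{\min})^{1/2}$. The explicit $2\times2$ matrix $A_1$, with entries $(2+\eta,\,-\eta;\,-\eta,\,\eta+2\kappa)$, has condition number at most $9\kappa$ for $\kappa\ge2$, which a direct trace/determinant computation confirms. Finally, $(A2)_\alpha$ for $\alpha\le\tfrac12$ follows by interpolating between the $\alpha=\tfrac12$ bound and $|T|\le\norm{v}_A^2$. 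Writing $|T|=|T|^{2\alpha}|T|^{1-2\alpha}$, this gives the bound with the same constant since $C_{A2}\ge1$.
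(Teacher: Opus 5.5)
Your proposal is correct and agrees with the paper's proof except in one sub-step. The four-term chain, the bounds for $T_1,T_2,T_4$ and for $(\bar f_h,u-(I_Hu)_h)$, the conversion to \eqref{eq:a2-final}, the trace/determinant argument at $\ell=1$, and the interpolation all match the paper.

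The difference is in how you handle the remainder $(f_h-\bar f_h,u)$. The paper writes it as $\sum_J\frac{\Delta_J}{2}\int_{\mathcal C^c_J}\omega_J u'$ with an explicit tent kernel $\omega_J$ and applies Cauchy--Schwarz twice, obtaining $\frac{1}{\sqrt3}hby$. Your second route instead combines two facts: $f_h-\bar f_h$ is $L^2$-orthogonal to coarse piecewise constants, and the sharp Poincar\'e--Wirtinger inequality $\norm{u-\bar u_J}_{L^2(\mathcal C^c_J)}\le\frac{H}{\pi}\norm{u'}_{L^2(\mathcal C^c_J)}$ holds, where $\bar u_J$ is the mean of $u$ on $\mathcal C^c_J$. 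Together with $D\ge1$ this gives $\frac{2}{\pi}hby$. Your argument is shorter and more conceptual, since it needs no kernel computation. The paper's is fully explicit and yields a marginally better constant.

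One correction: the extra factor $2$ you append is spurious. Because $\bar f_h$ is the $L^2$ projection of $f_h$ onto coarse constants, $\norm{f_h-\bar f_h}_{L^2}\le b$. This matters for the parenthetical constants. With $\frac4\pi$, the coefficient of $hbx$ would be $\frac12+\sqrt2+\frac4\pi\approx3.19>3$. With the correct $\frac2\pi$, you get $\frac12+\sqrt2+\frac2\pi\approx2.55\le3$ and $\frac1{24}+\frac83+\frac2\pi\approx3.34\le4$, so $c_1=3$ and $c_2=4$ indeed hold.

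Your first, Taylor-based idea is best dropped. It requires the pointwise values $u'(x^c_J)$, which $y$ alone does not control; you would need an additional trace estimate involving $b$.
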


\begin{proof}
Let $\ell\ge2$ and decompose, using \eqref{eq:hT},
\begin{multline*}
  hT=(f_h,v_h-z_h)
  =\underbrace{(f_h,\,v_h-(I_hu)_h)}_{T_1}
  +\underbrace{(f_h,\,(I_hu)_h-u)}_{T_2}\\
  +\underbrace{(f_h,\,u-(I_Hu)_h)}_{T_3}
  +\underbrace{(f_h,\,(I_Hu)_h-z_h)}_{T_4}.
\end{multline*}
Lemma~\ref{lem:consequences}(ii) bounds $|T_1|\le\tfrac12hbx$ and
$|T_4|\le\sqrt2hbx$; Lemma~\ref{lem:consequences}(iii) bounds
$|T_2|\le\tfrac1{24}h^{2}b^{2}$.  Split
$T_3=(f_h-\bar f_h,\,u)+(\bar f_h,\,u-(I_Hu)_h)$, which is legitimate because
$f_h-\bar f_h$ has zero mean on every coarse cell while $(I_Hu)_h$ is
constant there.  The second part is bounded by
$\tfrac23H^{2}b^{2}=\tfrac83h^{2}b^{2}$ by
Lemma~\ref{lem:consequences}(iii).  For the first part, on the coarse cell
$\mathcal C^{c}_J$ the function $f_h-\bar f_h$ takes the values
$\mp\Delta_J/2$ on the left/right fine half, where
$\Delta_J:=f_{2J}-f_{2J-1}$, so
\[
  (f_h-\bar f_h,\,u)
  =\sum_J\frac{\Delta_J}{2}
   \Bigl(\int_{\mathcal C_{2J}}u-\int_{\mathcal C_{2J-1}}u\Bigr)
  =\sum_J\frac{\Delta_J}{2}\int_{\mathcal C^{c}_J}\omega_J(t)\,u'(t)\,dt,
\]
where $\omega_J(t)=\min\{t-\inf\mathcal C^{c}_J,\;h,\;\sup\mathcal
C^{c}_J-t\}$ is the tent-shaped kernel arising from
$\int_{\mathcal C_{2J-1}}\bigl(u(s+h)-u(s)\bigr)ds
=\int\!\!\int_{s}^{s+h}u'$; it satisfies
$\int_{\mathcal C^{c}_J}\omega_J^{2}\le\tfrac23h^{3}$.  By Cauchy--Schwarz
(twice, once in $t$ with weight $D\ge1$ and once in $J$, using
$\sum_J\Delta_J^{2}\le2\sum_if_i^{2}=2b^{2}/h$),
\[
  \bigl|(f_h-\bar f_h,u)\bigr|
  \le\frac12\Bigl(\frac{2h^{3}}{3}\Bigr)^{\!1/2}
    \Bigl(\sum_J\Delta_J^{2}\Bigr)^{\!1/2}\,
    \norm{\sqrt Du'}_{L^{2}}
  \le\frac{1}{\sqrt3}\,h\,b\,y
  \le\frac{1}{\sqrt3}\,h\,b\,(x+hb),
\]
the last step by Lemma~\ref{lem:consequences}(iv).  Collecting all five
contributions (the bound for the first part of $T_3$ uses
Lemma~\ref{lem:consequences}(iv) in the form $y\le x+hb$),
\[
  h|T|\le\Bigl(\tfrac12+\sqrt2+\tfrac1{\sqrt3}\Bigr)hbx
  +\Bigl(\tfrac1{24}+\tfrac83+\tfrac1{\sqrt3}\Bigr)h^{2}b^{2}
  \le 3\,hbx+4\,h^{2}b^{2}.
\]
Dividing by $h$ and converting by $b=h^{1/2}\norm{Av}$,
$x=h^{1/2}\norm{v}_{A}$ gives \eqref{eq:a2-mixed} with $c_1=3$, $c_2=4$.
For \eqref{eq:a2-final}, use
$h\le2\sqrt\kappa\,\spec(A)^{-1/2}$ on the first term, and on the second term
additionally $h\norm{Av}\le2\sqrt\kappa\,\spec(A)^{-1/2}
\cdot\spec(A)^{1/2}\norm{v}_{A}=2\sqrt\kappa\norm{v}_{A}$.  At $\ell=1$, Lemma~\ref{lem:structure}(d) and
$\norm{v}^{2}_{A_1}=\ip{A_1v}{v}\le\norm{A_1v}\norm{v}
\le\lambda_{\min}(A_1)^{-1/2}\norm{A_1v}\norm{v}_{A_1}$ give
$|T|\le\lambda_{\min}(A_1)^{-1/2}\norm{A_1v}\norm{v}_{A_1}$, and
$\lambda_{\min}(A_1)\ge\det A_1/\operatorname{tr}A_1
=\bigl(2\eta+4\kappa+2\eta\kappa\bigr)/\bigl(2+2\eta+2\kappa\bigr)
\ge4\kappa/(2\kappa+6)\ge\tfrac45$ for $\kappa\ge2$; since
$\spec(A_1)\le4\kappa$, this is \eqref{eq:a2-final} at $\ell=1$ with
constant $\sqrt{5\kappa}\le3\sqrt\kappa$.  Finally, for $0<\alpha\le\half$, write
$|T|=|T|^{2\alpha}\,|T|^{1-2\alpha}$ and bound the first factor by
\eqref{eq:a2-final} and the second by $\norm{v}^{2}_{A}$
(Lemma~\ref{lem:structure}(d)); this yields exactly
$(A2)_\alpha$ with the same $C_{A2}$.
\end{proof}

\begin{remark}
The numerically computed best constants in \eqref{eq:a2-final} are far
smaller than the proof suggests: for $\kappa=100$ a direct maximisation of
the ratio over all of $\R^{n_\ell}$ yields values saturating near $19$ across
levels $2\le\ell\le9$ (Table~\ref{tab:a2-constants}).  The point of
Theorem~\ref{thm:a2-ode} is only that some level-independent constant exists.
\end{remark}

\subsection{Geometric divergence of the V-cycle}\label{ssec:ex3-divergence}

We now prove Theorem~\ref{thm:main-ex3}(iv).  Fix any admissible Richardson
sequence, $\Lambda_\ell\ge\spec(A_\ell)$, and one pre- and one post-smoothing
step ($m=1$).  For $\ell\ge2$ define the left-quarter indicator
\[
  M_\ell=2^{\ell-2},
  \qquad
  u_\ell=(\underbrace{1,\dots,1}_{M_\ell},0,\dots,0)^{T}\in\R^{n_\ell};
\]
its support is $[-1,-\half]$, entirely inside the unit-coefficient region, at
distance $\half$ from the interface.  The coefficient jump never enters the
transport computations below; it enters the problem only globally, through
$\spec(A_\ell)\asymp\kappa h_\ell^{-2}$ and hence through the smoother, which
is the entire point: an admissible Richardson smoother must be scaled to the
stiff ($\kappa$) part of the spectrum and is therefore extremely weak on the
soft part, where the tracked vector lives.

\begin{lemma}[Left-quarter transport]\label{lem:quarter-transport}
For $\ell\ge3$:
\begin{enumerate}[label=(\roman*),itemsep=1pt,topsep=2pt]
\item $u_\ell=P_{\ell-1}u_{\ell-1}$ and
  $\norm{u_\ell}^{2}_{A_\ell}=4\norm{u_{\ell-1}}^{2}_{A_{\ell-1}}$;
\item $A_\ell u_\ell=h_\ell^{-2}\,(2e_1+e_{M_\ell}-e_{M_\ell+1})$ and
  $\norm{u_\ell}^{2}_{A_\ell}=3h_\ell^{-2}$;
\item $R_{\ell-1}A_\ell^{2}u_\ell=4h_\ell^{-2}A_{\ell-1}u_{\ell-1}$, hence
  $\Pi_{\ell-1}A_\ell u_\ell=4h_\ell^{-2}u_{\ell-1}$;
\item with $K_\ell=I-\Lambda_\ell^{-1}A_\ell$,
\[
  \Pi_{\ell-1}K_\ell u_\ell=\beta_\ell\,u_{\ell-1},
  \qquad
  \beta_\ell=2-\frac{4h_\ell^{-2}}{\Lambda_\ell}
  \;\ge\;2-\frac{4}{3\kappa}
  \quad(\ell\ge3).
\]
\end{enumerate}
\end{lemma}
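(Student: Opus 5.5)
The plan is a direct computation on the explicit vector $u_\ell$, using only the stencil \eqref{eq:fv-matrix}, Proposition~\ref{prop:g3-ode} and Lemma~\ref{lem:rho-bounds}. Two support facts will be used throughout. Since $n_\ell=4M_\ell$, the cells $1,\dots,M_\ell+2$ lie in the unit-coefficient half whenever $\ell\ge3$ (as then $M_\ell\ge2$ and $M_\ell+2\le2M_\ell$), so every edge coefficient touched below equals $1$ and the left boundary weight is $2D_{1/2}=2$. Also $M_\ell$ is even for $\ell\ge3$, so the pair $\{M_\ell-1,M_\ell\}$ forms one coarse cell, and so does $\{M_\ell+1,M_\ell+2\}$.

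For (i), $M_\ell=2M_{\ell-1}$ and the aggregation $P_{\ell-1}$ copies each coarse value to its two children, so $P_{\ell-1}u_{\ell-1}=u_\ell$. The energy identity is then Proposition~\ref{prop:g3-ode}:
\[
  \norm{u_\ell}_{A_\ell}^2=u_{\ell-1}^TP^TA_\ell Pu_{\ell-1}=4\norm{u_{\ell-1}}_{A_{\ell-1}}^2 .
\]
For (ii), I read off the rows of $A_\ell u_\ell$ from the stencil. Row $1$ gets only the ghost term $2u_1/h^2$, because $u_2=u_1$. Rows $M_\ell$ and $M_\ell+1$ see the unit jump across edge $M_\ell+\frac12$, giving $+1$ and $-1$. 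All other rows vanish. Pairing with $u_\ell$ then gives $u_\ell^TA_\ell u_\ell=(2+1)h^{-2}$.

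For (iii), I apply $A_\ell$ once more to $h^{-2}(2e_1+e_{M}-e_{M+1})$, writing $M=M_\ell$. Using $A_\ell e_1=h^{-2}(3e_1-e_2)$ and the interior stencils $h^{-2}(-e_{j-1}+2e_j-e_{j+1})$ for $j=M,M+1$ gives
\[
  h^4A_\ell^2u_\ell=6e_1-2e_2-e_{M-1}+3e_M-3e_{M+1}+e_{M+2}.
\]
Now apply $R=\frac12P^T$, i.e.\ average over pairs. When $M\ge4$ the pairs give $2$ at coarse cell $1$, $1$ at coarse cell $M/2$ and $-1$ at coarse cell $M/2+1$. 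When $M=2$ the terms overlap as $5e_1+e_2-3e_3+e_4$, which averages to $(3,-1)$. This case is exactly the $M_{\ell-1}=1$ form of (ii): the two coincident terms $2e_1+e_1$ merge, matching the level-two vector $A_2u_2=h_2^{-2}(3e_1-e_2)$. In both cases the result is $h_\ell^{-4}$ times the stencil pattern of $A_{\ell-1}u_{\ell-1}$, and since $h_{\ell-1}^{-2}=h_\ell^{-2}/4$ this reads $R_{\ell-1}A_\ell^2u_\ell=4h_\ell^{-2}A_{\ell-1}u_{\ell-1}$. Multiplying by $A_{\ell-1}^{-1}$ gives $\Pi_{\ell-1}A_\ell u_\ell=4h_\ell^{-2}u_{\ell-1}$. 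I expect this step to be the main obstacle. It is pure bookkeeping, but the small cases need care: the coarse level $\ell-1=2$ (where $M_2=1$), and checking that the support $M_\ell+2\le n_\ell/2$ keeps the interface edge out of every stencil used.

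For (iv), by (i) and $(G3)$,
\[
  \Pi_{\ell-1}u_\ell=A_{\ell-1}^{-1}R_{\ell-1}A_\ell P_{\ell-1}u_{\ell-1}=2u_{\ell-1}.
\]
Combining this with (iii) by linearity gives $\Pi_{\ell-1}K_\ell u_\ell=(2-4h_\ell^{-2}/\Lambda_\ell)u_{\ell-1}$. Finally, admissibility and Lemma~\ref{lem:rho-bounds} give $\Lambda_\ell\ge\spec(A_\ell)\ge3\kappa h_\ell^{-2}$ for $\ell\ge2$, hence $\beta_\ell\ge2-\frac{4}{3\kappa}$.
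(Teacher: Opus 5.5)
Your proposal is correct and follows the paper's proof step by step. Part (i) comes from aggregation plus $P^{T}A_\ell P=4A_{\ell-1}$. Parts (ii)--(iii) are direct stencil computations in the unit-coefficient region, followed by pairwise averaging and (ii) on the coarse level. Part (iv) combines $\Pi_{\ell-1}u_\ell=2u_{\ell-1}$ with $\spec(A_\ell)\ge3\kappa h_\ell^{-2}$. Your explicit handling of the overlapping case $M_\ell=2$ is extra care rather than a deviation: it covers the level-two vector $A_2u_2=h_2^{-2}(3e_1-e_2)$, which the paper's ``(ii) on the coarse level'' uses implicitly at $\ell=3$, and the parity and support checks.
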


\begin{proof}
(i): the left quarter is a union of coarse cells, so
$u_\ell=Pu_{\ell-1}$, and $(G3)$ gives
$\norm{Pu_{\ell-1}}^{2}_{A_\ell}=u_{\ell-1}^{T}(P^{T}A_\ell
P)u_{\ell-1}=4\norm{u_{\ell-1}}^{2}_{A_{\ell-1}}$.

(ii): all edges meeting the support boundary lie in the region $D\equiv1$, so
the computation is the constant-coefficient one: the ghost boundary row
contributes $2e_1h_\ell^{-2}$, the internal edge at $x=-\half$ contributes
$(e_{M_\ell}-e_{M_\ell+1})h_\ell^{-2}$, and all other rows vanish.  Pairing
with $u_\ell$ gives $\norm{u_\ell}^{2}_{A_\ell}=(2+1)h_\ell^{-2}$.

(iii): apply $A_\ell$ to (ii).  With $M=M_\ell\ge2$,
\[
  A_\ell^{2}u_\ell=h_\ell^{-4}\bigl[\,6e_1-2e_2-e_{M-1}+3e_M-3e_{M+1}
  +e_{M+2}\,\bigr],
\]
again by constant-coefficient stencils ($e_1$ is a boundary cell;
$e_{M\pm\ldots}$ are interior).  Restriction averages adjacent pairs; the
pair $(1,2)$ gives $(6-2)/2=2$, the pair $(M-1,M)$ gives $(-1+3)/2=1$, the
pair $(M+1,M+2)$ gives $(-3+1)/2=-1$, all others vanish:
\[
  R_{\ell-1}A_\ell^{2}u_\ell
  =h_\ell^{-4}\bigl(2e_1+e_{M_{\ell-1}}-e_{M_{\ell-1}+1}\bigr)
  =h_\ell^{-4}\cdot h_{\ell-1}^{2}\,A_{\ell-1}u_{\ell-1}
  =4h_\ell^{-2}A_{\ell-1}u_{\ell-1},
\]
using (ii) on the coarse level.  Multiplying by $A_{\ell-1}^{-1}$ gives the
second form.

(iv): by (i) and $(G3)$,
$\Pi_{\ell-1}u_\ell=A_{\ell-1}^{-1}R_{\ell-1}A_\ell P_{\ell-1}u_{\ell-1}
=2u_{\ell-1}$, so
\[
  \Pi_{\ell-1}K_\ell u_\ell
  =2u_{\ell-1}-\Lambda_\ell^{-1}\,\Pi_{\ell-1}A_\ell u_\ell
  =\Bigl(2-\frac{4h_\ell^{-2}}{\Lambda_\ell}\Bigr)u_{\ell-1}.
\]
Finally $\Lambda_\ell\ge\spec(A_\ell)\ge3\kappa h_\ell^{-2}$ for $\ell\ge2$
by Lemma~\ref{lem:rho-bounds}.
\end{proof}

\begin{theorem}[Geometric V-cycle divergence]\label{thm:ode-divergence}
Let $\kappa\ge3$, let $(\Lambda_\ell)$ be any admissible Richardson sequence,
and let $E^{V}_\ell$ be the symmetric $V(1,1)$-cycle operators
\eqref{eq:mg-recursion}.  Define
\[
  q_\ell=-\,\frac{\ip{E^{V}_\ell u_\ell}{u_\ell}_{A_\ell}}
               {\norm{u_\ell}_{A_\ell}^{2}}
  \qquad(\ell\ge2).
\]
Then $q_2+1>0$, and for every $\ell\ge3$,
\begin{equation}\label{eq:quarter-recurrence}
  q_\ell+1\;\ge\;\mu_\kappa\,(q_{\ell-1}+1),
  \qquad
  \mu_\kappa:=2\Bigl(1-\frac{2}{3\kappa}\Bigr)^{2}\;>\;1 .
\end{equation}
Consequently
\[
  \spec\bigl(E^{V}_\ell\bigr)\;\ge\;q_\ell\;\ge\;(q_2+1)\,
  \mu_\kappa^{\,\ell-2}-1\;\longrightarrow\;\infty
  \qquad(\ell\to\infty).
\]
With the particular admissible choice $\Lambda_\ell=4\kappa h_\ell^{-2}$ the
same holds for every $\kappa\ge2$ with the better rate
$\mu_\kappa=2\bigl(1-\frac1{2\kappa}\bigr)^{2}\ge\frac98$.
\end{theorem}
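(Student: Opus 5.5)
The proof tracks the $A_\ell$-Rayleigh quotient of $E^V_\ell$ along the left-quarter vectors $u_\ell$. The key tool is the adjoint identity of Lemma~\ref{lem:structure}(b), which turns the level-$\ell$ quotient into the level-$(\ell-1)$ quotient. First I record that every $E^V_\ell$ is $A_\ell$-self-adjoint. This follows by induction from \eqref{eq:mg-recursion}, since $K_\ell$ is $A_\ell$-self-adjoint and, by Lemma~\ref{lem:structure}(b) with $r=\half$, the operator $P_{\ell-1}X\Pi_{\ell-1}$ is $A_\ell$-self-adjoint whenever $X$ is $A_{\ell-1}$-self-adjoint. Hence $\spec(E^V_\ell)\ge|\ip{E^V_\ell u}{u}_{A_\ell}|/\norm{u}^2_{A_\ell}$ for every $u\ne0$, and in particular $\spec(E^V_\ell)\ge q_\ell$.

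The core computation is the following. Write $u=u_\ell$ and $w=\Pi_{\ell-1}K_\ell u$. Using the symmetry of $K_\ell$ and Lemma~\ref{lem:structure}(b) with $1/r=2$,
\[
  \ip{E^V_\ell u}{u}_{A_\ell}
  =\norm{K_\ell u}^2_{A_\ell}-\ip{P_{\ell-1}(I-E^V_{\ell-1})w}{K_\ell u}_{A_\ell}
  =\norm{K_\ell u}^2_{A_\ell}-2\ip{(I-E^V_{\ell-1})w}{w}_{A_{\ell-1}} .
\]
For $\ell\ge3$, Lemma~\ref{lem:quarter-transport}(iv) gives $w=\beta_\ell u_{\ell-1}$, so the last term equals $2\beta_\ell^2(1+q_{\ell-1})\norm{u_{\ell-1}}^2_{A_{\ell-1}}$. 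Dividing by $\norm{u_\ell}^2_{A_\ell}=4\norm{u_{\ell-1}}^2_{A_{\ell-1}}$ (Lemma~\ref{lem:quarter-transport}(i)) yields the exact identity
\[
  q_\ell+1=\frac{\beta_\ell^2}{2}\,(q_{\ell-1}+1)+\Bigl(1-\frac{\norm{K_\ell u_\ell}^2_{A_\ell}}{\norm{u_\ell}^2_{A_\ell}}\Bigr).
\]
The bracket is nonnegative because $\norm{K_\ell}_{A_\ell}\le1$. Moreover $\beta_\ell\ge2-\frac4{3\kappa}>0$, so $\beta_\ell^2/2\ge\mu_\kappa$. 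Provided $q_{\ell-1}+1\ge0$, this gives \eqref{eq:quarter-recurrence}, and the sign condition propagates by induction. Finally, $\mu_\kappa>1$ for $\kappa\ge3$ because $1-\frac2{3\kappa}\ge\frac79>2^{-1/2}$.

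The step I expect to need separate care is the base case $q_2+1>0$, since Lemma~\ref{lem:quarter-transport} is stated only for $\ell\ge3$ and $\Pi_1K_2u_2$ need not be a multiple of a fixed vector. Here I would avoid computing anything explicitly and apply the same identity at $\ell=2$ with $w=\Pi_1K_2u_2$. By Remark~\ref{rem:two-grid}, $E^V_1=E^{TG}_1$ satisfies $\norm{E^V_1}_{A_1}<1$, so $\ip{(I-E^V_1)w}{w}_{A_1}\ge0$. This gives $\ip{E^V_2u_2}{u_2}_{A_2}\le\norm{K_2u_2}^2_{A_2}<\norm{u_2}^2_{A_2}$, where the strict inequality uses $\norm{K_2}_{A_2}<1$. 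Hence $q_2>-1$.

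Iterating the recurrence gives $q_\ell\ge(q_2+1)\mu_\kappa^{\ell-2}-1$, which together with $\spec(E^V_\ell)\ge q_\ell$ proves the main claim. For the particular choice $\Lambda_\ell=4\kappa h_\ell^{-2}$, the formula for $\beta_\ell$ gives $\beta_\ell=2-\frac1\kappa$ exactly. Hence $\beta_\ell^2/2=2(1-\frac1{2\kappa})^2\ge\frac98$ for $\kappa\ge2$. Neither the recurrence nor the base-case argument uses $\kappa\ge3$, so the same conclusion holds for every $\kappa\ge2$ with this rate.
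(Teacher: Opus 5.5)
Your proposal is correct and follows the paper's proof essentially step for step. It uses the same adjoint identity from Lemma~\ref{lem:structure}(b), the same exact recursion $q_\ell+1=\frac{\beta_\ell^{2}}{2}(q_{\ell-1}+1)+\bigl(1-\norm{K_\ell u_\ell}^{2}_{A_\ell}/\norm{u_\ell}^{2}_{A_\ell}\bigr)$ via Lemma~\ref{lem:quarter-transport}, and the same positivity argument for the base case $\ell=2$; your explicit induction on the sign of $q_{\ell-1}+1$, which is needed to replace $\beta_\ell^{2}/2$ by its lower bound $\mu_\kappa$, is a small point the paper leaves implicit.
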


\begin{proof}
Write $w_\ell=K_\ell u_\ell$ and $z_{\ell-1}=\Pi_{\ell-1}w_\ell$.  Since
$K_\ell$ is a polynomial in $A_\ell$, it is $A_\ell$-self-adjoint, so by
\eqref{eq:mg-recursion} with $p=1$ and Lemma~\ref{lem:structure}(b) (with
$r=\half$),
\[
  \ip{E^{V}_\ell u_\ell}{u_\ell}_{A_\ell}
  =\ip{\bigl(I-P(I-E^{V}_{\ell-1})\Pi\bigr)w_\ell}{w_\ell}_{A_\ell}
  =\norm{w_\ell}^{2}_{A_\ell}
   -2\,\bigl\langle(I-E^{V}_{\ell-1})z_{\ell-1},\,z_{\ell-1}
   \bigr\rangle_{A_{\ell-1}} .
\]
By Lemma~\ref{lem:quarter-transport}(iv), $z_{\ell-1}=\beta_\ell u_{\ell-1}$
for $\ell\ge3$, so
\[
  -\ip{E^{V}_\ell u_\ell}{u_\ell}_{A_\ell}
  =-\norm{w_\ell}^{2}_{A_\ell}
   +2\beta_\ell^{2}\,(1+q_{\ell-1})\,
   \norm{u_{\ell-1}}^{2}_{A_{\ell-1}} .
\]
Dividing by $\norm{u_\ell}^{2}_{A_\ell}
=4\norm{u_{\ell-1}}^{2}_{A_{\ell-1}}$
(Lemma~\ref{lem:quarter-transport}(i)) yields
\[
  q_\ell=\frac{\beta_\ell^{2}}{2}\,(1+q_{\ell-1})-\gamma_\ell,
  \qquad
  \gamma_\ell:=\frac{\norm{w_\ell}^{2}_{A_\ell}}
                    {\norm{u_\ell}^{2}_{A_\ell}}\in[0,1],
\]
because $\Lambda_\ell\ge\spec(A_\ell)$ makes $K_\ell$ an $A_\ell$-contraction.
Adding $1$ and discarding $1-\gamma_\ell\ge0$ gives
\eqref{eq:quarter-recurrence} with
$\mu_\kappa=\tfrac12\inf_{\ell\ge3}\beta_\ell^{2}
\ge2(1-\frac2{3\kappa})^{2}$, which exceeds $1$ exactly when
$\kappa>\frac{2}{3}\,(1-2^{-1/2})^{-1}\approx2.276$, in particular for
$\kappa\ge3$.

Base case: at $\ell=2$ the displayed identity holds with
$z_1=\Pi_1K_2u_2$ (no transport lemma needed), so
\[
  q_2+1
  =\frac{\norm{u_2}^{2}_{A_2}-\norm{K_2u_2}^{2}_{A_2}
   +2\ip{(I-E^{V}_1)z_1}{z_1}_{A_1}}{\norm{u_2}^{2}_{A_2}} .
\]
Here $E^{V}_1=K_1(I-\Pt_1)K_1$ with $I-\Pt_1$ an $A_1$-isometry
(Lemma~\ref{lem:structure}(a)) and
$\norm{K_1}_{A_1}=1-\lambda_{\min}(A_1)/\Lambda_1<1$, so
$\norm{E^{V}_1}_{A_1}<1$ and $I-E^{V}_1$ is positive definite in the
$A_1$-inner product; likewise $\norm{K_2u_2}_{A_2}<\norm{u_2}_{A_2}$ because
$u_2\neq0$ and $\norm{K_2}_{A_2}<1$.  Hence $q_2+1>0$.

Finally, each $E^{V}_\ell$ is $A_\ell$-self-adjoint (induction over
\eqref{eq:mg-recursion}, using Lemma~\ref{lem:structure}(b)), so
$\spec(E^{V}_\ell)=\norm{E^{V}_\ell}_{A_\ell}\ge q_\ell$ whenever $q_\ell>0$,
and \eqref{eq:quarter-recurrence} forces $q_\ell\to\infty$.  For the choice
$\Lambda_\ell=4\kappa h_\ell^{-2}$ one has
$\beta_\ell=2-\frac1\kappa$ exactly, whence
$\mu_\kappa=2(1-\frac1{2\kappa})^{2}$, which exceeds $1$ for
$\kappa>(2-\sqrt2)^{-1}\approx1.71$.
\end{proof}

\begin{remark}[Robustness]\label{rem:robustness}
(a) Theorem~\ref{thm:ode-divergence} covers \emph{every} admissible smoother
of Richardson type---no upper bound $C_R$ enters the proof---including the
optimal damping
$\Lambda_\ell=\spec(A_\ell)$, for which the divergence is observed
numerically to set in one level \emph{earlier} than for
$\Lambda_\ell=4\kappa h_\ell^{-2}$ (Table~\ref{tab:ex3-rho}).  The
counterexample is therefore not an artefact of smoother slack within the
admissibility window.
(b) The theorem is stated for $m=1$; numerically the divergence persists for
$m=2,3$ at $\kappa=100$, with onset at the same or earlier levels
(Section~\ref{sec:numerics}).  For fixed $\kappa$ and $m\to\infty$ the
tracked transport constant degrades, consistent with
Corollary~\ref{cor:threshold}: divergence at smoothing count $m$ requires
contrast $\kappa\gtrsim m$ here, matching $C_{A2}=O(\kappa)$ and the
polynomial threshold barrier.
(c) Replacing the harmonic interface value $\eta=2\kappa/(1+\kappa)$ by an
arithmetic average destroys the exactness of $(G3)$; the harmonic/Samarskii
choice is what makes this hierarchy an exact instance of the axioms.
\end{remark}

\begin{remark}[The W-cycle on the harmonic hierarchy]\label{rem:w-ode}
The hierarchy satisfies $(G3)$, $(R)$ with $C_R=2$, and $(A2)_{1/2}$ with
$C_{A2}=O(\kappa)$ (Proposition~\ref{prop:g3-ode},
Lemma~\ref{lem:rho-bounds}, Theorem~\ref{thm:a2-ode}), so the axiomatic
W-cycle theory applies to it as is: \cite[Theorem~6.9.15]{sww} yields
\[
  \sup_{\ell\ge1}\spec\bigl(E^{W}_\ell\bigr)
  \le\frac{M}{M+m^{1/2}}<1
  \qquad\text{for every } m\ge1,
\]
with $M=\max\{M_0,\,m^{1/2}\}$ for a constant $M_0$ depending only on
$\kappa$; for $m=1$, the case of Theorem~\ref{thm:main-ex3}, one may take
$M=M_0(\kappa)$.  (The self-contained
Theorem~\ref{thm:wcycle} gives the same conclusion once
$m\gtrsim C_RC_{A2}^{2}=O(\kappa^{2})$.)  The observed contraction numbers are far better than
the bound: $\spec(E^{W}_\ell)\le0.993$ for $\kappa=100$ on all levels tested
(Table~\ref{tab:ex3-rho}).  Together with
Theorem~\ref{thm:ode-divergence}, this realises the W/V dichotomy on a
fixed, PDE-derived hierarchy: under verified axioms, the $W(1,1)$-cycle is
provably uniformly contractive while the $V(1,1)$-cycle provably diverges.
\end{remark}

\section{Numerical verification}\label{sec:numerics}

All claims of Sections \ref{sec:ex2} and~\ref{sec:ex3} were verified
independently of the proofs: the error operators \eqref{eq:mg-recursion} are
assembled explicitly, spectra are computed with dense eigensolvers, and the
axioms are checked to machine precision.  Independent cross-checks reproduce
the same numbers by applying the cycle recursively rather than assembling its
error operator.

\textbf{Example 1.}  The assembled $\spec(E^{V}_2)$ agrees with
$\theta_m(1+2\theta_m)$ to $\le2\cdot10^{-15}$ over
$m\in\{1,2,3,5,10,25,50,100\}$, the $(G3)$ residuals
$\norm{R A P-2A_c}_\infty$ vanish exactly, and the sampled $(A2)$ ratio at
$\alpha=1$ matches the sharp constant $4m$ at the maximiser $e_1$.  For the
deeper truncations, the assembled $\spec(E^{V}_L)$ matches the closed form of
Theorem~\ref{thm:ex2-closed} to $\le4\cdot10^{-14}$ over
$m\in\{1,2,5\}$, $L\le12$ (e.g.\
$\spec(E^{V}_{12})\approx13.99$ for $m=1$), and the W-cycle values obey
Proposition~\ref{prop:w-dichotomy} with $\max_\ell|e_\ell|=\theta_m$ exactly.
Figure~\ref{fig:ex1-history} shows the divergence dynamically: repeated
$V(1,1)$-cycles on the three-level hierarchy grow geometrically in the
$A_2$-norm with ratio $153/128=1.1953125$.

\begin{figure}[t]
\centering
\includegraphics[width=.55\linewidth]{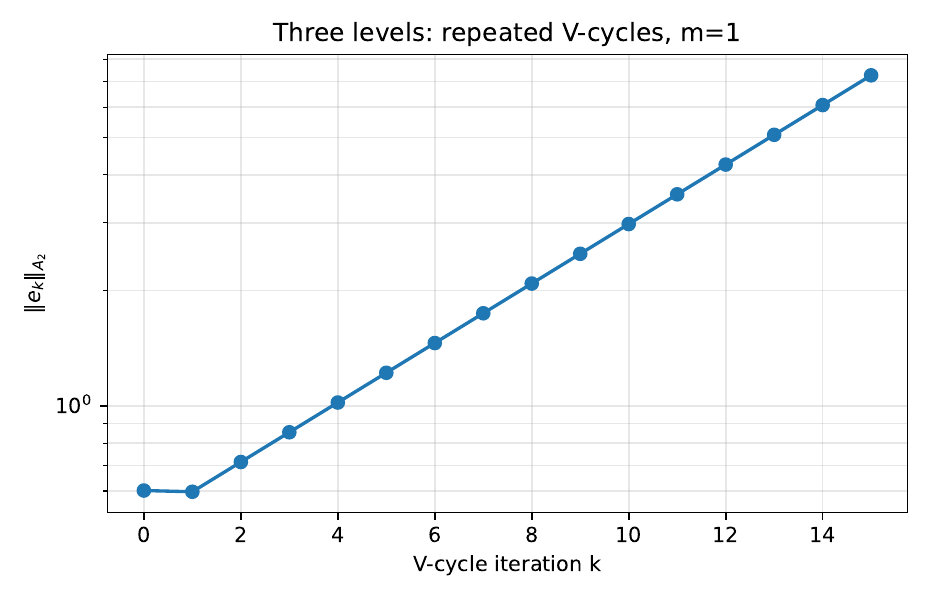}
\caption{Example 1, $m=1$, three-level hierarchy ($L=2$): the $A_2$-norm of
the error under repeated $V(1,1)$-cycles grows geometrically with ratio
$\spec(E^{V}_2)=153/128$.}
\label{fig:ex1-history}
\end{figure}

\textbf{Example 2.}  Table~\ref{tab:ex3-rho} and Figure~\ref{fig:ex3-main}
report $\kappa=100$, $m=1$.
With the theorem smoother $\Lambda_\ell=4\kappa h_\ell^{-2}$, divergence sets
in at level $7$ and the growth ratio approaches
$\mu_\kappa=2(1-\frac1{200})^{2}\approx1.98$ (power iteration on the
recursively applied operator continues the table to
$\spec(E^{V}_{12})\approx104$, with per-level ratios
$2.05$, $2.01$, $1.99$ at levels $10$--$12$); with the optimal smoother
$\Lambda_\ell=\spec(A_\ell)$ (computed eigenvalues, $C_R=1$), divergence sets
in at level $6$.  The W-cycle radii remain below $0.9922$ in all cases.  The
$(G3)$ residuals vanish to machine precision on every level, and the tracked
Rayleigh quotients $q_\ell$ dominate the theorem's lower bound
(Figure~\ref{fig:ex3-diag}).  For $m=2$ and $m=3$ (theorem smoother,
$\kappa=100$) the V-cycle radii at levels $6$--$8$ are
$(1.81,\,4.44,\,9.59)$ and $(2.52,\,5.77,\,12.02)$ respectively: more
smoothing does not repair the cycle at this contrast.
Figure~\ref{fig:mms-snap} shows the same divergence in solution space, via
the method of manufactured solutions: with $f:=A_Lu^{*}$ for
$u^{*}(x)=\cos(\pi x/2)$ (whose flux $Du^{*\prime}$ is continuous across the
interface), the exact discrete solution is $u^{*}$ itself, and the V-cycle
iterates depart from it with alternating sign---the unstable eigenvalue of
$E^{V}_\ell$ is negative---while the W-cycle iterates on the same hierarchy,
from the same zero initial guess, converge.

\begin{figure}[t]
\centering
\includegraphics[width=.47\linewidth]{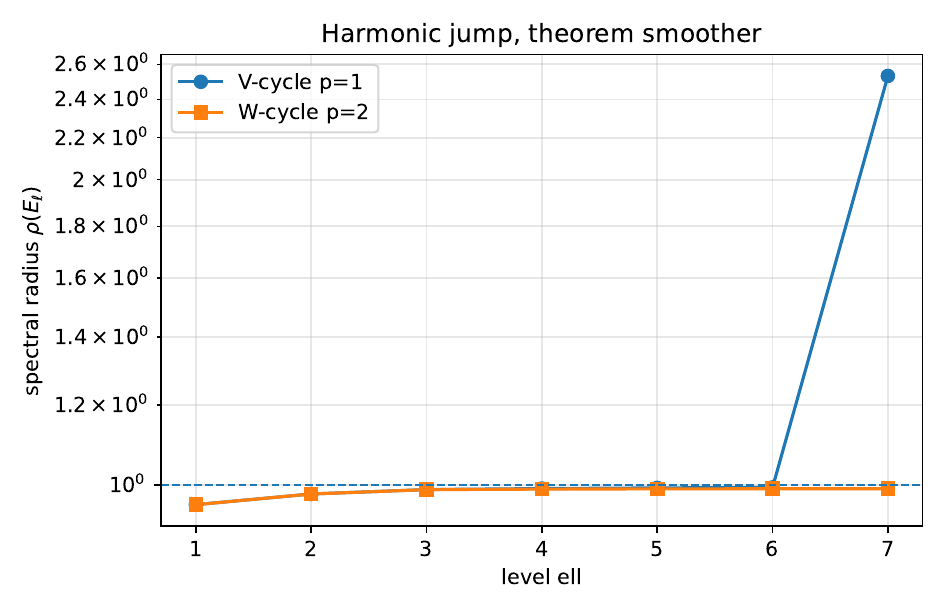}\hfill
\includegraphics[width=.47\linewidth]{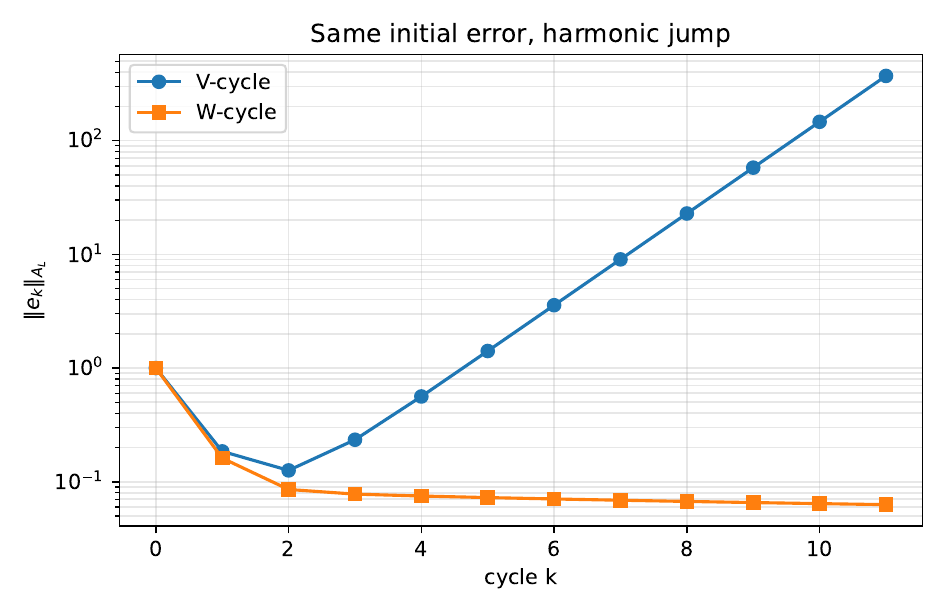}
\caption{Example 2 with $\kappa=100$, $m=1$,
$\Lambda_\ell=4\kappa h_\ell^{-2}$.  Left: spectral radii of the V- and
W-cycle error operators by level.  Right: $A_L$-norm of the error under
repeated cycles from the same initial error at $L=7$: the V-cycle grows
geometrically, whereas the W-cycle contracts.}
\label{fig:ex3-main}
\end{figure}

\begin{figure}[t]
\centering
\includegraphics[width=.47\linewidth]{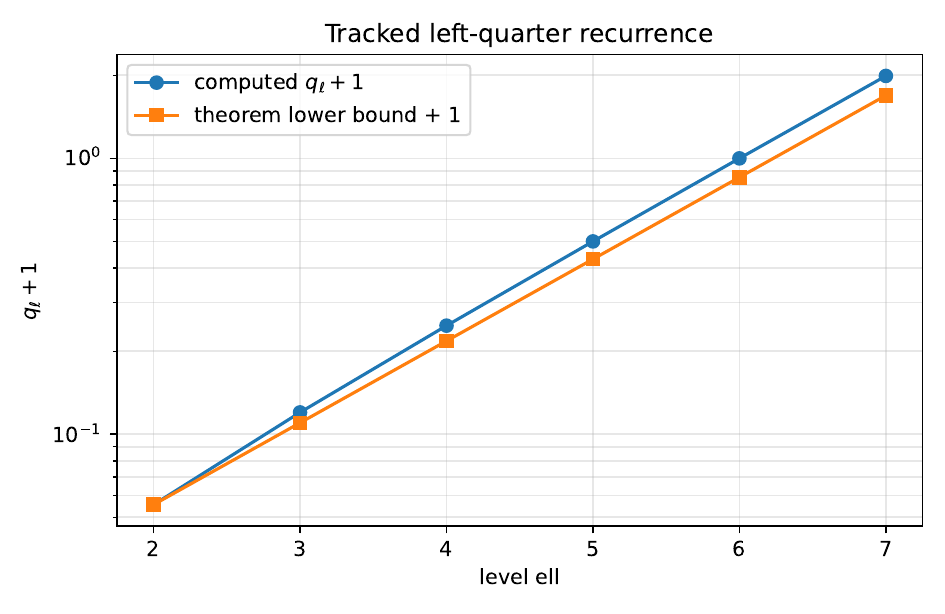}\hfill
\includegraphics[width=.47\linewidth]{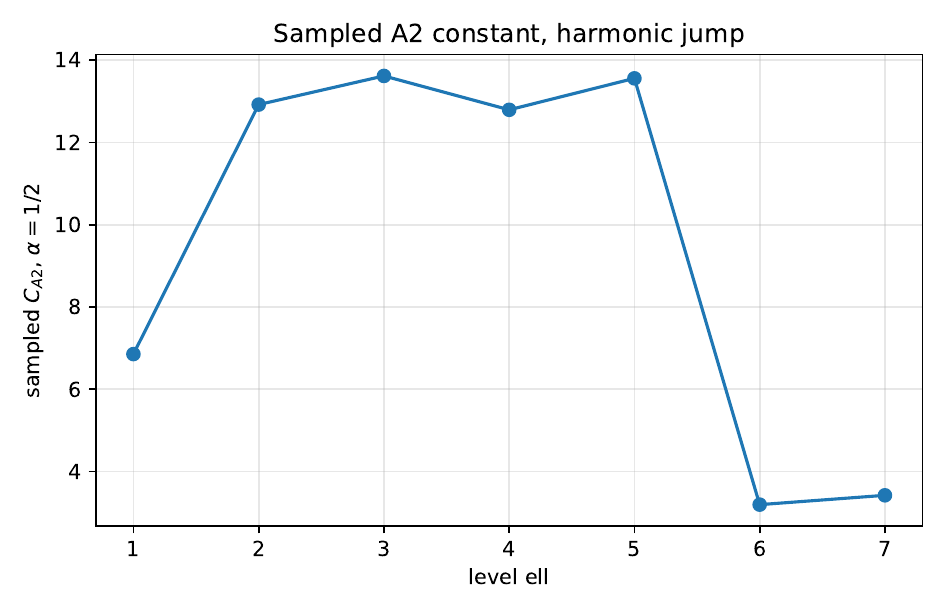}
\caption{Example 2 diagnostics.  Left: the tracked left-quarter Rayleigh
quotient $q_\ell$ obeys the geometric recurrence
\eqref{eq:quarter-recurrence} (computed values vs.\ theorem lower bound).
Right: sampled $(A2)_{1/2}$ ratios by level (probe-based diagnostic); the
proof of the uniform bound is Theorem~\ref{thm:a2-ode}, and numerically
optimised constants are reported in Table~\ref{tab:a2-constants}.}
\label{fig:ex3-diag}
\end{figure}

\begin{figure}[t]
\centering
\includegraphics[width=\linewidth]{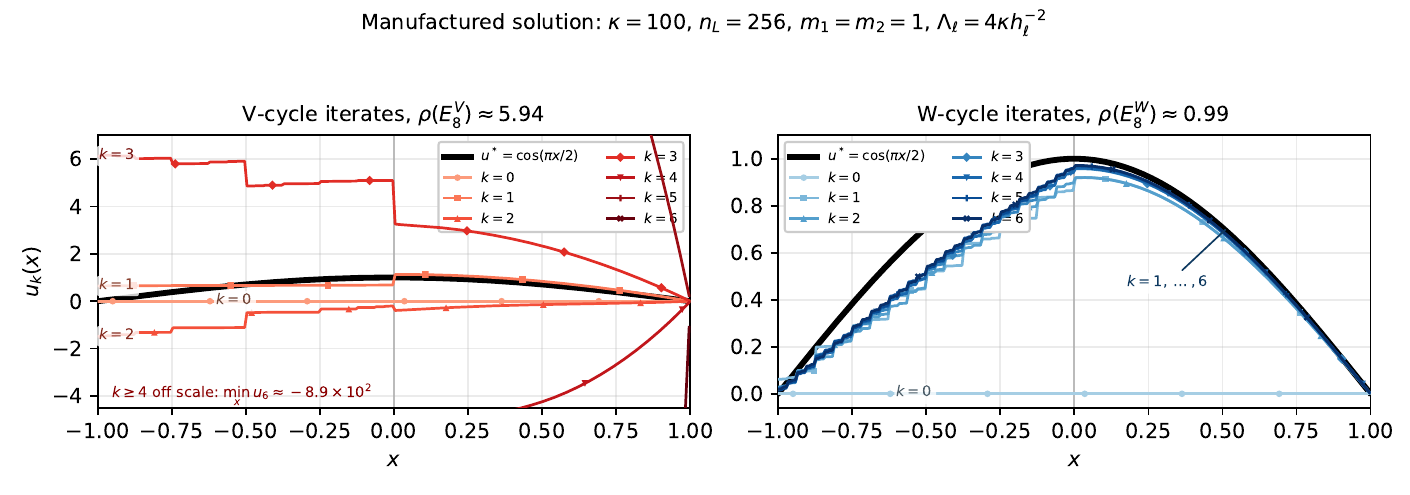}
\caption{Example 2 in solution space (method of manufactured solutions:
$f:=A_Lu^{*}$ with $u^{*}(x)=\cos(\pi x/2)$, $\kappa=100$, $n_L=256$,
$m=1$, $\Lambda_\ell=4\kappa h_\ell^{-2}$; both cycles start from $u_0=0$
on the same hierarchy).  Left: the $V(1,1)$ iterates depart from $u^{*}$
with alternating sign and exit the fixed window after $k=3$ (each curve is
labelled by its cycle index $k$; darker means later); the
scaled-$\ell^{2}$ errors for $k=0,\dots,6$ are $0.71$, $0.23$, $1.2$,
$3.6$, $19$, $1.0\times10^{2}$, $6.0\times10^{2}$, growing per cycle by a
factor approaching $\spec(E^{V}_{8})\approx5.94$ from
Table~\ref{tab:ex3-rho}.  The staircase kinks sit at dyadic points---in
particular at $x=-\half$, the support boundary of the transport vector of
Lemma~\ref{lem:quarter-transport}.  Right: the $W(1,1)$ iterates on the
same problem converge onto $u^{*}$.}
\label{fig:mms-snap}
\end{figure}

\begin{table}[t]
\centering
\caption{Example 2, $\kappa=100$, $m=1$: spectral radii of the symmetric
cycles by level for the theorem smoother
($\Lambda_\ell=4\kappa h_\ell^{-2}$, $C_R\le2$) and the optimal smoother
($\Lambda_\ell=\spec(A_\ell)$, $C_R=1$).  The W-cycle column is for the
theorem smoother; with the optimal smoother the values differ only
marginally.}
\label{tab:ex3-rho}
\small
\begin{tabular}{rrccc}
\toprule
$\ell$ & $n_\ell$ & $\spec(E^{V}_\ell)$, $\Lambda=4\kappa h^{-2}$ &
$\spec(E^{V}_\ell)$, $\Lambda=\spec(A_\ell)$ & $\spec(E^{W}_\ell)$\\
\midrule
1 & 2   & 0.95688 & 0.92273 & 0.95688\\
2 & 4   & 0.98029 & 0.97695 & 0.98030\\
3 & 8   & 0.99023 & 0.98976 & 0.99006\\
4 & 16  & 0.99241 & 0.99221 & 0.99122\\
5 & 32  & 0.99392 & 0.99381 & 0.99175\\
6 & 64  & 0.99498 & \textbf{2.02946} & 0.99200\\
7 & 128 & \textbf{2.53153} & \textbf{4.83499} & 0.99211\\
8 & 256 & \textbf{5.94383} & \textbf{10.39863} & 0.99217\\
9 & 512 & \textbf{12.67927} & \textbf{21.36955} & 0.99219\\
\bottomrule
\end{tabular}
\end{table}

\begin{table}[t]
\centering
\caption{Example 2, $\kappa=100$: numerically optimised $(A2)_{1/2}$
constants, i.e.\ the maximum over $v$ of
$|\ip{(I-\Pt_\ell)v}{v}_{A}|\,\spec(A)^{1/2}/
(\norm{Av}\,\norm{v}_{A})$, obtained by an eigenvector sweep, two-dimensional
spectral mixing, and derivative-free maximisation with multiple restarts.
The values stay bounded, consistent with Theorem~\ref{thm:a2-ode}.}
\label{tab:a2-constants}
\small
\begin{tabular}{lccccccc}
\toprule
level $\ell$ & 2 & 3 & 4 & 5 & 6 & 7 & 8\\
\midrule
optimised constant & 12.95 & 15.00 & 16.80 & 18.10 & 18.82 & 18.95 & 17.77\\
\bottomrule
\end{tabular}
\end{table}

\section{Discussion and open problems}\label{sec:discussion}

\subsection{Why the W-cycle survives and the V-cycle does not}
Lemma~\ref{lem:structure} shows that under $(G3)$ an exact coarse solve is an
energy-isometric reflection $I-2\PiG_\ell$.  In the W-cycle recursion the
coarse defect enters through $E_{\ell-1}^{2}$, which is positive
semidefinite: signs cancel, and the analysis of
Theorem~\ref{thm:wcycle} closes with a fixed point $\delta\asymp\varepsilon(m)$.
In the V-cycle the coarse defect enters linearly, and
Lemma~\ref{lem:structure}(b,c) converts it into the term
$2\ip{E_{\ell-1}z}{z}_{A_{\ell-1}}$ of the proof of
Theorem~\ref{thm:ode-divergence}: a coarse-level error of relative size
$\epsilon$ on a $(G3)$-transported vector returns to the fine level with
relative size up to $2\epsilon\cdot(\beta^{2}/2)\approx2\epsilon$ per level
whenever the smoother is nearly inert on that vector.  The examples simply
arrange for a vector on which the admissible smoother is maximally inert: in
Example~1 by spectral design ($\lambda=\tau$ against
$\Lambda=1$), in Example~2 by contrast ($\lambda\asymp h^{-2}$ against
$\Lambda\asymp\kappa h^{-2}$).  This also explains why the variable V-cycle,
which spends geometrically growing smoothing effort on coarse levels, and the
K-cycle, which re-optimises the coarse correction, are immune
\cite{beps,notay-vassilevski}.

\subsection{Consequences for practice}
For cell-centred discretisations coarsened by aggregation with
$R=\half P^{T}$, the natural V-cycle is not merely suboptimal but can be
divergent on interface problems at realistic contrasts
($\kappa\ge3$ suffices in our construction; $\kappa=100$ diverges from $64$
cells on), even with exactly tuned Richardson smoothing, while the same
hierarchy run as a W-cycle, variable V-cycle, or K-cycle is unconditionally
robust in our experiments and provably so under the axioms
(for the W-cycle at every $m\ge1$ by \cite[Theorem~6.9.15]{sww} applied via
Remark~\ref{rem:w-ode}, with Theorem~\ref{thm:wcycle} as the self-contained
large-$m$ version; for the variable V-cycle and K-cycle by
\cite{beps,notay-vassilevski}).  Alternatively,
the variational structure can be restored by modified transfers in the spirit
of \cite{kwak,kwak-lee,mohr-wienands}.  What is not safe is the combination:
natural transfers, rediscretised (equivalently, $(G3)$-Galerkin) coarse
operators, fixed V-cycle, weak smoother.  We note one mitigating structural
fact, in line with \cite[Theorem~5]{bpx}: in all our examples the divergent
V-cycle operator $B^{V}_\ell$ remains symmetric positive definite (the
unstable eigenvalue of $E^{V}_\ell$ is negative), so conjugate-gradient
acceleration still converges---with a preconditioned condition number that
grows geometrically with the depth, e.g.\ $\kappa(B^{V}_\ell A_\ell)\ge
1+\alpha_\ell$ in Example~1.  Divergence of the stationary cycle and
geometric loss of preconditioner quality are two faces of the same defect.

\subsection{Open problems}
\begin{enumerate}[label=(\arabic*),itemsep=2pt,topsep=2pt]
\item\textbf{Threshold question.}  Question~\ref{q:threshold} remains open:\\
does $m\ge m_0(C_R,C_{A2},\alpha)$, for some function $m_0$, force uniform
V-cycle contraction under $(G3)+(R)+(A2)_\alpha$?
Corollary~\ref{cor:threshold} shows $m_0\gtrsim C_{A2}^{2}$ is necessary; the
W-cycle analogue holds with $m_0\sim(8C_{A2}^{2\alpha})^{1/\alpha}C_R$.  We
conjecture that a finite threshold does exist for the V-cycle but is
superlinear in $C^{2}_{A2}$, separating the two cycles quantitatively rather
than qualitatively.
\item\textbf{Two dimensions.}  In 2D cell-centred coarsening by
$2\times2$ agglomeration, the natural relation is again
$RA P=2A_c$ with $R=\frac14P^{T}$ (the imbalance factor is dimension-robust
in the form \eqref{eq:g3}); the transport mechanism of
Lemma~\ref{lem:quarter-transport} appears to generalise to stripes aligned
with a high-contrast inclusion, and we expect---but have not proved---geometric
V-cycle divergence for 2D interface problems with harmonic-mean
face coefficients.
\item\textbf{Sharp divergence threshold in $\kappa$.}  Our bound certifies
divergence for $\kappa\ge3$ (for every admissible smoother; $\kappa\ge2$
for $\Lambda_\ell=4\kappa h_\ell^{-2}$), but numerically the safe window
is far narrower: at $\kappa=1$ the V-cycle contracts with
$\spec(E^{V}_\ell)\le0.35$ on all levels tested---consistent with the
transport analysis, since the optimal admissible smoother then has
$\beta_\ell\approx1$ and per-level factor
$\beta_\ell^{2}/2\approx\half<1$---while already at $\kappa=1.2$ the
spectral radius crosses $1$ at level $9$ ($n=512$ cells), and at
$\kappa=1.3$ at level $6$.  Identifying the exact contrast at which
$\sup_\ell\spec(E^{V}_\ell)$ first exceeds $1$---and whether it exceeds
$1$ for every $\kappa>1$---would quantify how non-generic V-cycle safety
is within the axioms.
\end{enumerate}

\section*{Disclosure of AI usage}
Various Claude models from Anthropic (of the Opus and Fable families) were
used as assistive tools in this work: they aided the writing of the
numerical verification code and the preparation and proofreading of the
manuscript.  All mathematical results and their proofs were verified by the
author, who takes full responsibility for the content of this paper.

\appendix

\section{Proof of Theorem~\ref{thm:wcycle}}\label{app:wcycle}

We first record the smoothing estimate.  Let $w=K_\ell^{m}v$.  Since
$K_\ell=I-\Lambda_\ell^{-1}A_\ell$ with $\Lambda_\ell\ge\spec(A_\ell)$, the
spectral mapping theorem gives, with $\lambda$ running over the spectrum of
$A_\ell$ and $t=\lambda/\Lambda_\ell\in(0,1]$,
\begin{multline}\label{eq:smoothing}
  \frac{\norm{A_\ell K_\ell^{m}v}^{2}}{\spec(A_\ell)}
  \le\max_{\lambda}\;\frac{\lambda}{\spec(A_\ell)}
  \Bigl(1-\frac{\lambda}{\Lambda_\ell}\Bigr)^{2m}\norm{v}^{2}_{A_\ell}\\
  \le\frac{\Lambda_\ell}{\spec(A_\ell)}\,
  \max_{t\in[0,1]}t(1-t)^{2m}\,\norm{v}^{2}_{A_\ell}
  \le\frac{C_R}{2m+1}\,\norm{v}^{2}_{A_\ell},
\end{multline}
using $\max_{[0,1]}t(1-t)^{2m}\le\frac1{2m+1}$ and Definition
\ref{def:richardson}.  Also $\norm{w}_{A_\ell}\le\norm{v}_{A_\ell}$.
Combining \eqref{eq:smoothing} with $(A2)_\alpha$,
\begin{equation}\label{eq:tg-eps}
\begin{aligned}
  \bigl|\ip{(I-\Pt_\ell)w}{w}_{A_\ell}\bigr|
  & \le C_{A2}^{2\alpha} 
  \Bigl(\frac{\norm{A_\ell w}^{2}}{\spec(A_\ell)}\Bigr)^{\!\alpha}
  \norm{w}^{2(1-\alpha)}_{A_\ell} \\ 
  & \le C_{A2}^{2\alpha}\Bigl(\frac{C_R}{2m+1}\Bigr)^{\!\alpha}
  \norm{v}^{2}_{A_\ell}
  =\varepsilon(m)\,\norm{v}^{2}_{A_\ell}.
\end{aligned}
\end{equation}
Since $E^{TG}_\ell=K^m_\ell(I-\Pt_\ell)K^m_\ell$ is $A_\ell$-self-adjoint,
$\ip{E^{TG}_\ell v}{v}_{A_\ell}=\ip{(I-\Pt_\ell)w}{w}_{A_\ell}$, and
\eqref{eq:tg-eps} gives $\spec(E^{TG}_\ell)\le\varepsilon(m)$.

For the W-cycle, we prove by induction on $\ell$:
$E^{W}_\ell$ is $A_\ell$-self-adjoint and
$\spec(E^{W}_\ell)=\norm{E^{W}_\ell}_{A_\ell}\le\delta$, where
$\delta\in(0,\frac14]$ is the smaller root of $\delta=\varepsilon(m)+2\delta^{2}$,
which exists because $\varepsilon(m)\le\frac18$.  The case $\ell=1$ is the
two-grid bound, since $E_0=0$ and $\varepsilon\le\delta$.  Let $\ell\ge2$ and
abbreviate $E_c=E^{W}_{\ell-1}$, $w=K^m_\ell v$, $z=\Pi_{\ell-1}w$.
Self-adjointness follows from Lemma~\ref{lem:structure}(b):
for the middle factor $N:=I-P(I-E_c^{2})\Pi$,
\[
  \ip{Nx}{y}_{A_\ell}-\ip{x}{Ny}_{A_\ell}
  =\tfrac1r\Bigl(\ip{(I-E_c^{2})\Pi y}{\Pi x}_{A_{\ell-1}}
  -\ip{(I-E_c^{2})\Pi x}{\Pi y}_{A_{\ell-1}}\Bigr)=0,
\]
because $E_c$, hence $E_c^{2}$, is $A_{\ell-1}$-self-adjoint.  Next, using
Lemma~\ref{lem:structure}(b) again,
\[
  \ip{E^{W}_\ell v}{v}_{A_\ell}
  =\ip{(I-\Pt_\ell)w}{w}_{A_\ell}
  +\tfrac1r\ip{E_c^{2}z}{z}_{A_{\ell-1}} .
\]
The operator $E_c^{2}$ is positive semidefinite in the $A_{\ell-1}$-inner
product with $\norm{E_c^{2}}_{A_{\ell-1}}\le\delta^{2}$, and
Lemma~\ref{lem:structure}(c) gives
\[
  \tfrac1r\norm{z}^{2}_{A_{\ell-1}}=2\norm{\PiG_\ell w}^{2}_{A_\ell}
  \le2\norm{v}^{2}_{A_\ell}.
\]
Hence, by \eqref{eq:tg-eps},
\[
  -\varepsilon(m)\,\norm{v}^{2}_{A_\ell}
  \;\le\;
  \ip{E^{W}_\ell v}{v}_{A_\ell}
  \;\le\;
  \bigl(\varepsilon(m)+2\delta^{2}\bigr)\norm{v}^{2}_{A_\ell}
  \;=\;\delta\,\norm{v}^{2}_{A_\ell},
\]
and since $E^{W}_\ell$ is $A_\ell$-self-adjoint,
$\spec(E^{W}_\ell)\le\max\{\varepsilon(m),\delta\}=\delta$.  \qed

\begin{remark}
The same argument with $p\ge2$ arbitrary gives the bound with
$2\delta^{2}$ replaced by $2\delta^{p}$ (for even $p$; for odd $p\ge3$ one
obtains $2\delta^{p}$ on the upper side and $-\varepsilon-2\delta^{p}$ on the
lower side, with the same conclusion for slightly smaller $\varepsilon$).  The
failure for $p=1$ is structural, not technical: the term
$\tfrac1r\ip{E_cz}{z}$ then carries the sign of $E_c$ on $z$, and nothing in
Lemma~\ref{lem:structure} provides cancellation---exactly the mechanism that
Sections \ref{sec:ex2} and~\ref{sec:ex3} exploit.
\end{remark}

\end{document}